\numberwithin{equation}{section}
\newtheorem{thm}{Theorem}
\newtheorem*{ethm}{Theorem}
\newtheorem{prop}{Proposition}
\newtheorem{lem}[prop]{Lemma}
\theoremstyle{definition}
\newtheorem*{rem}{Remark}
\newtheorem*{ack}{Acknowledgments}
\newcommand{\mb}{\mathbb}
\newcommand{\mc}{\mathcal}
\newcommand{\mf}{\mathfrak}
\newcommand{\ol}{\overline}
\newcommand{\ul}{\underline}
\newcommand{\leqs}{\leqslant }
\newcommand{\geqs}{\geqslant }
\newcommand{\be}{\begin{equation*}}
\newcommand{\ee}{\end{equation*} }
\newcommand{\ben}{\begin{equation}}
\newcommand{\een}{\end{equation} }
\newcommand{\bs}{\begin{split}}
\newcommand{\es}{\end{split}}
\newcommand{\bmu}{\begin{multline*}}
\newcommand{\emu}{\end{multline*}}
\newcommand{\bmun}{\begin{multline}}
\newcommand{\emun}{\end{multline}}
\begin{document}

\title{Simultaneous extreme values of zeta and $L$-functions}
\author[W. Heap]{Winston Heap}
\address{Department of Mathematical Sciences, Norwegian University of Science and Technology (NTNU), 7491 Trondheim, Norway.} 
\email{winstonheap@gmail.com}
\author[J. Li]{Junxian Li}
\address{
Universit\"at Bonn,
Mathematisches Institut,
Endenicher Allee 60,
53115 Bonn,
Germany.}
\email{jli135@math.uni-bonn.de}
\maketitle 

\begin{abstract}
We show that distinct primitive $L$-functions can achieve  extreme values \emph{simultaneously} on the critical line. 
Our proof uses a modification of the resonance method and can be applied to establish simultaneous extreme central values of $L$-functions in families.
\end{abstract}

\section{Introduction}
Extreme values of $L$-functions have attracted considerable attention in recent years.  
This uptake in activity is largely due to the introduction of the resonance method of Soundararajan \cite{Sound res} and its subsequent developments. This is a versatile method which allows one to show that for various families of $L$-functions $\mathcal F$ with analytic conductor $C$, 
\begin{align}\label{large}
\max_{\substack{\pi \in \mathcal F\\ \operatorname{cond}\pi \asymp C}}|L(\tfrac12,\pi)|\geqs \exp\Big(c\sqrt{\frac{\log C}{\log\log C}}\Big)
\end{align}
for some constant $c=c(\mathcal F)>0$. In the case of the Riemann zeta function, Bondarenko--Seip \cite{BS} recently made the significant improvement 
\begin{align}\label{larger}
\max_{t\in[0, T]}|\zeta(\tfrac12+it)|\geqs \exp\Big( c '\sqrt{ \frac{\log T\log\log \log T}{\log\log T}}\Big).
\end{align}
Their modification of the resonance method can be extended to other families \cite{dlBT}, although a severe restriction is that the $L$-functions must have non-negative coefficients\footnote{This requirement can be weakened slightly; the method applies as long as the resultant mean values contain non-negative terms. See the proof of  Theorem 1.5 of \cite{dlBT} for example.} . 

%

Extreme values of products of $L$-functions are also possible and in particular the bound \eqref{large} holds when $L(s,\pi)$ factorises. For example, we know \cite[Theorem 1.12]{BFKMMS} that for fixed cusp forms $f, g$ there exists a non-trivial primitive character $\chi$ modulo a prime $q$ such that  
\begin{align}\label{fgchi}
|L(1/2, f \otimes \chi)L(1/2, g\otimes \chi)|\geqs \exp\Big (c_{f, g}\sqrt{\frac{\log q}{\log \log q}}\Big).
\end{align} 
In $t$-aspect, Aistleitner--Pa\'nkowski \cite{AP} showed that \eqref{large} holds for non-primitive $L$-functions in the Selberg class, whereas bounds of the strength \eqref{larger} can be demonstrated for Dedekind zeta functions \cite{BDHHS}. These results contain an interesting feature: when $L(s,\pi)$ is a product of $L$-functions one can achieve a larger constant $c$. Precisely, the $t$-aspect results show that for a product of $m$ distinct primitive $L$-functions $c=\sqrt{m}$ is admissible, whereas for individual $L$-functions $c=1$.  


As pointed out in \cite{BDHHS}, this presents a dichotomy. Either extreme values of $L$-functions are occurring \emph{simultaneously}, or one can demonstrate even larger values for an individual $L$-function. The question of whether $L$-functions can attain extreme values simultaneously is natural and forms the focus of this paper. 
Another motivation for the present work is that extreme values provide a testing ground for the statistical independence of $L$-functions; a property which, if held at the extremes, can have deep arithmetic consequences\footnote{For example, from the work of Iwaniec--Sarnak \cite{IS} we know that if the small values of $L(1/2,f)$ and $L(1/2,f\otimes\chi_d)$ do not know about each other (in a specified sense) then one can rule out the existence of Landau--Siegel zeros.}.


Previous results on joint extreme values have been obtained when we move away from the central point. For fixed $1/2<\sigma<1$, under suitable assumptions on the $L$-functions Mahatab--Pa\'nkowski--Vatwani \cite{MPV} used Diophantine approximations to show the existence of large values of $L_j(\sigma+it_j)$ with the $t_j$'s living in a small neighbourhood of each other. By considering the joint value distributions of $L(\sigma+it, \chi_j)$, Inoue--Li \cite{IL} recently showed the existence of \emph{simultaneous} large values of Dirichlet $L$-functions in the strip $1/2< \sigma<1$. 

Less is known for joint extreme values at the central point. 
It was observed by Selberg \cite{Sel old} and later proved by Bombieri--Hejhal \cite{BH} that the joint distribution of $L$-functions (satisfying suitable assumptions) splits as the product of the distributions: 
\begin{align}\label{BH}
\frac{1}{T}\mathrm{meas}\bigg\{t\in[T,2T]:\frac{\log|L_j(\tfrac12+it)|}{\sqrt{\tfrac12\log\log T}}\geqs V_j,\,\, j=1,\ldots, m\bigg\}
\sim
\prod_{j=1}^m \int_{V_j}^\infty e^{-x^2/2}\frac{dx}{\sqrt{2\pi}}
\end{align}
as $T\to\infty$ for any fixed $V_j$.  
Recently Inoue--Li \cite{IL0} extended the range of $V_j$ in \eqref{BH} to $V_j\ll (\log\log T)^{1/10}$ which yields simultaneous large values of size $\exp \big( c(\log \log T)^{3/5-\epsilon}\big)$. Simultaneous extreme values of size \eqref{large} would be desirable, but they are harder to detect using joint distributions since these events are very rare. Indeed, even bounds for the joint distribution in the range $V_j\approx \sqrt{\log\log T}$ are not available unconditionally due to our lack of knowledge on mixed moments
(although the conjectured asymptotics \cite{Hdedekind, MTB} could likely be established up to order conditionally \cite{Ha, HS}). Here, we utilise a modification of the resonance method to detect simultaneous large values of central values of $L$-functions, overcoming inefficiencies from methods of moments and Diophantine approximation. Our first result gives simultaneous large values of two $L$-functions on the critical line.

\begin{thm}\label{main thm}
Let $L_1(s), L_2(s)$ be either the Riemann zeta function, a primitive Dirichlet $L$-function or the $L$-function attached to a primitive cusp (holomorphic or Maa\ss ) form on $GL(2)$ over $\mathbb{Q}$. Then there exists some positive constant $c$ depending on $L_1, L_2$ such that for sufficiently large $T$,  we have
\[
\max_{t\in[T,2T]}\min\big(|L_1(\tfrac12+it)|,|L_2(\tfrac12+it)|\big)\geqs \exp\bigg(c\sqrt{\frac{\log T}{\log\log T}}\bigg).
\]
\end{thm}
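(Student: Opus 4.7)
The strategy modifies Soundararajan's resonance method by using a resonator tuned to the product $L_1 L_2$ and then extracting the minimum from $|L_1 L_2|$ via a Cauchy--Schwarz identity. Construct a resonator $R(t) = \sum_{n \leq N} r(n) n^{-it}$ with $r$ multiplicative, supported on squarefree integers whose prime factors lie in a Soundararajan-type range, and
\[
r(p) = \kappa \cdot \frac{a_{L_1}(p) + a_{L_2}(p)}{\sqrt{p}\,\log p}
\]
for appropriate parameters $N$ and $\kappa=\kappa(T)$. This is the Soundararajan-type resonator associated to the degree-sum product $L_1 L_2$: the choice of $r(p)$ makes $r(p)^2 \propto (a_{L_1}(p)+a_{L_2}(p))^2$ carry both self-correlations simultaneously.

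With $V = \sqrt{\log T/\log\log T}$, a smooth cutoff $\Phi$ to $[T,2T]$, $M_0 = \int |R|^2\Phi$, and
\[
A = \int |L_1(\tfrac12+it) L_2(\tfrac12+it)|\,|R(t)|^2\Phi(t)\,dt,
\]
I would first establish $A/M_0 \geq \exp(\sqrt{2}\,c_1 V)$ for some $c_1>0$. Using $A \geq |\int L_1 L_2\,|R|^2\Phi|$ to bypass the absolute value and expanding via the approximate functional equation, the standard resonance arithmetic gives an Euler product whose logarithm is of order $\kappa\sum_{p\leq N}(a_{L_1}(p)+a_{L_2}(p))^2/(p\log p)$. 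Combining the Rankin--Selberg asymptotic $\sum_{p\leq x}|a_{L_j}(p)|^2/p \sim \log\log x$ with Selberg orthogonality $\sum_{p\leq x}a_{L_1}(p)\overline{a_{L_2}(p)}/p = O(1)$---both known unconditionally for the families considered---gives $\sum_{p\leq x}(a_{L_1}(p)+a_{L_2}(p))^2/p = 2\log\log x + O(1)$, and optimizing $\kappa$ produces the ``$\sqrt{2}$-boost'' over the classical Soundararajan exponent for a single $L_j$. Alongside this, one needs the twisted second-moment bound $B/M_0 \leq \exp(c_2 V + o(V))$ with $B := \int(|L_1|^2+|L_2|^2)|R|^2\Phi$ and $c_2 < 2\sqrt{2}\,c_1$, obtained from a standard Euler-product computation for $|L_j|^2$.

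The identity $\min(|L_1|,|L_2|)\cdot\max(|L_1|,|L_2|) = |L_1 L_2|$ and Cauchy--Schwarz then give
\[
A^2 = \Bigl(\int \min(|L_1|,|L_2|)\cdot\max(|L_1|,|L_2|)\,|R|^2\Phi\Bigr)^2 \leq \Bigl(\int \min^2|R|^2\Phi\Bigr)\Bigl(\int \max^2|R|^2\Phi\Bigr) \leq 2B\int\min^2|R|^2\Phi,
\]
since $\max^2 \leq 2(|L_1|^2+|L_2|^2)$. Hence
\[
\max_{t\in[T,2T]}\min(|L_1(\tfrac12+it)|,|L_2(\tfrac12+it)|)^2 \geq \frac{1}{M_0}\int\min^2|R|^2\Phi \geq \frac{A^2}{2BM_0} \geq \exp\bigl((2\sqrt{2}c_1-c_2)V - o(V)\bigr),
\]
which is exponential in $V$ and proves the theorem.

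The main obstacle will be establishing the $\sqrt{2}$ gain in the first moment $A$ with sufficient precision; this is where the unconditional Selberg orthogonality for distinct primitive $L$-functions is essential, and is precisely what restricts the families listed in the theorem. A secondary challenge is controlling the twisted second moment $B$ sharply enough that its exponent stays strictly below $2\sqrt{2}c_1$; the absolute value $|L_1 L_2|$ causes no trouble since $A \geq |\int L_1 L_2\,|R|^2\Phi|$ gives the required lower bound for free.
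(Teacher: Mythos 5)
Your proposal takes essentially the same route as the paper: the same resonator with $r(p) \propto (a_{L_1}(p)+a_{L_2}(p))/(\sqrt{p}\log p)$, the same first-moment lower bound for $\int L_1 L_2 |R|^2$ via Selberg orthogonality, and the same twisted second-moment upper bound for $\int |L_j|^2|R|^2$; your ``$\min\cdot\max = |L_1L_2|$ plus Cauchy--Schwarz'' closing step is a formally equivalent repackaging of the paper's positivity inequality $|L_1L_2|^2 - V|L_1|^2 - V|L_2|^2 > 0 \Rightarrow |L_1|^2, |L_2|^2 > V$. The one place you undersell the difficulty is in calling the upper bound on $\int|L_j|^2|R|^2$ a ``standard Euler-product computation'': unconditionally this is the technical heart of the argument, requiring the twisted second-moment theorems of Wu (Dirichlet) and Andersen--Thorner ($GL(2)$) with a Dirichlet-polynomial twist of length $T^\Delta$, and the maximal admissible $\Delta$ there ($17/33$ for Dirichlet, $(1/2-\theta)/(3+\theta)$ for $GL(2)$) is precisely what determines the constant $c$.
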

\begin{rem}
	The constant depends on the degree of the $L$-functions in question. If $L_1, L_2$ are both Dirichlet $L$-functions, then we can take $c=\sqrt{17/66}+o(1)$ and if at least one of $L_1$ and  $L_2$ is a $GL(2)$ $L$-function, we can take $c=\sqrt{ {(1-2\theta)}/{12}}+o(1)$ where $\theta$ is an admissible bound towards the Ramanujan conjecture for $GL(2)$ over $\mathbb{Q}$. By work of Kim--Sarnak \cite[Appendix 2]{KS}, we can take $\theta=7/64$. For comparison, we mention that one can take $c=\sqrt{2}+o(1)$ for the product of $L_1L_2(1/2+it)$  (see \cite{AP}). 
\end{rem}

Our method extends to other families and we shall describe a general principle below. For now, we illustrate this by demonstrating simultaneous large central values of twists of $GL(2)$ cusp forms, refining \eqref{fgchi}.  
\begin{thm}\label{mod q twist thm}
	Let $f, g$ be fixed primitive cusp forms of level $r, r'$ and trivial central character. There exists a positive constant $c$ depending only on $f, g$ such that for all primes $q$ sufficiently large in terms of $f, g$, there exists a non-trivial character $\chi \bmod q$ such that 
	\begin{align*}
	\min\Big(|L(1/2, f\otimes \chi)|, |L(1/2, g\otimes \chi)|\Big)\geqs \exp \Big(c \sqrt{\frac{\log q}{\log\log q}}\Big).
	\end{align*}
\end{thm}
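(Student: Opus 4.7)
The plan is to adapt the modified resonance method of Theorem~\ref{main thm} to the family of primitive Dirichlet characters modulo the prime $q$. The goal is to construct a single resonator that simultaneously amplifies both $|L(1/2, f\otimes\chi)|$ and $|L(1/2, g\otimes\chi)|$, and then to extract the simultaneous lower bound via a Chebyshev-type dichotomy pitting a joint fourth moment against the two individual second moments. Set $V=\exp(c\sqrt{\log q/\log\log q})$ and take
\[
R(\chi)=\sum_{n\leqs N}r(n)\chi(n),\qquad N=q^{1-\varepsilon},
\]
with $r$ multiplicative, supported on squarefree integers whose prime factors lie in a carefully chosen window $[P_1,P_2]$. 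Roughly, $r(p)$ will be a linear combination of $\lambda_f(p)$ and $\lambda_g(p)$ divided by $\sqrt p$, normalized so that the resulting Dirichlet inner products $\sum r(p)\lambda_f(p)$ and $\sum r(p)\lambda_g(p)$ are both of order $\log V$.

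The core computations are to establish the two second-moment upper bounds
\[
A_j:=\sum_{\chi\,\bmod\,q}^{*}|L(1/2,h_j\otimes\chi)|^2|R(\chi)|^2\leqs V^2M_0(1+o(1))\qquad(j=1,2,\ h_1=f,\ h_2=g),
\]
together with the joint fourth-moment lower bound
\[
B:=\sum_{\chi\,\bmod\,q}^{*}|L(1/2,f\otimes\chi)|^2|L(1/2,g\otimes\chi)|^2|R(\chi)|^2\geqs V^4 M_0(1-o(1)),
\]
where $M_0=\sum_\chi^*|R(\chi)|^2$. Both reduce, after opening the $L$-values via the approximate functional equation and invoking orthogonality of primitive characters modulo $q$, to arithmetic sums isolated by congruence conditions such as $n_1 m_1\equiv n_2 m_2\pmod q$; the diagonal parts factor as Euler products over primes in $[P_1,P_2]$, while off-diagonal contributions are controlled by Weil-type bounds for (hyper-)Kloosterman sums mod $q$. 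The Ramanujan exponent $\theta$ enters through the truncation of AFE tails.

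With these estimates in hand, the conclusion follows from a simple Chebyshev argument. For each $j\in\{1,2\}$ and each $\eta>0$, the upper bound on $A_{3-j}$ gives
\[
\sum_{\chi:\ |L(1/2,h_j\otimes\chi)|<\eta V}|L(1/2,f\otimes\chi)|^2|L(1/2,g\otimes\chi)|^2|R(\chi)|^2\leqs \eta^2V^2 A_{3-j}\leqs \eta^2V^4 M_0(1+o(1)).
\]
Choosing $\eta<1/\sqrt 2$ and subtracting both (for $j=1,2$) from the lower bound on $B$ yields
\[
\sum_{\substack{\chi:\ |L(1/2,f\otimes\chi)|\geqs \eta V\\ |L(1/2,g\otimes\chi)|\geqs \eta V}}|L(1/2,f\otimes\chi)|^2|L(1/2,g\otimes\chi)|^2|R(\chi)|^2>0,
\]
producing a primitive character $\chi$ at which $\min(|L(1/2,f\otimes\chi)|,|L(1/2,g\otimes\chi)|)\geqs \eta V$; absorbing $\eta$ into $c$ completes the proof.

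The main obstacle is proving the fourth-moment lower bound on $B$ with an admissible constant. Since a single resonator must amplify two distinct $L$-functions at once, there is a compromise between the two ``optimal'' single-$L$ choices, and it is this trade-off that produces the smaller constants $\sqrt{17/66}$ and $\sqrt{(1-2\theta)/12}$ rather than the single-$L$-function value. Moreover, the fourth moment is considerably more delicate than the second moments $A_j$, since its off-diagonal terms are longer character sums modulo $q$, and the admissible length $N$ of $R$ is tightly constrained by the available Weil-type bounds. These length constraints, together with the tail handling that depends on $\theta$, are what determine the final exponent $c$.
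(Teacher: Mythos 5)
Your overall framework---a single resonator, moment inequalities, and a Chebyshev-type dichotomy---matches the paper's strategy, and your Chebyshev argument is a valid repackaging of the paper's inequality \eqref{gen ineq} (for $m=2$). The gap is in how you propose to obtain the lower bound for $B=\sum_\chi^*|L(1/2,f\otimes\chi)|^2|L(1/2,g\otimes\chi)|^2|R(\chi)|^2$. You propose to compute this directly by opening all four $L$-values via the approximate functional equation, invoking orthogonality, and controlling the off-diagonal with Weil/Kloosterman bounds. This is not feasible with current technology: it amounts to an asymptotic (or a sharp lower bound) for a twisted \emph{fourth} moment of two $\mathrm{GL}(2)$ $L$-functions in the $q$-aspect, which is at the level of an eighth moment of Dirichlet $L$-functions; even without the $R(\chi)$ twist, such a fourth moment over $\chi\bmod q$ is open. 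The off-diagonal cannot be tamed by Weil-type bounds in the relevant ranges, which is precisely why you are forced to take $N$ tiny (the paper works with $N\leqs q^{1/360-\delta}$, not $q^{1-\varepsilon}$ as you suggest), and even then the fourth moment remains out of reach.

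The paper's key additional idea, which your proposal is missing, is to avoid the fourth moment entirely by a Cauchy--Schwarz step: one writes
\[
B\ \geqs\ \Big(\sum_\chi^*|R(\chi)|^2\Big)^{-1}\Big(\sum_\chi^* L(\tfrac12,f\otimes\chi)\,\overline{L(\tfrac12,g\otimes\chi)}\,|R(\chi)|^2\Big)^{2},
\]
reducing the needed lower bound to a \emph{mixed first moment} of $L(\tfrac12,f\otimes\chi)\overline{L(\tfrac12,g\otimes\chi)}$ twisted by $|R(\chi)|^2$. This is a degree-$4$ (Rankin--Selberg) object rather than a degree-$8$ one, and it falls squarely within the ``second moment theory'' of Blomer--Fouvry--Kowalski--Michel--Mili\'cevi\'c--Sawin, which the paper invokes. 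Relatedly, your resonator description (``a linear combination of $\lambda_f(p)$ and $\lambda_g(p)$'') does not capture the sign constraints the paper builds into $\varpi(p)=\lambda_f^*(p)\lambda_g^*(p)(\lambda_f^*(p)+\lambda_g^*(p))$ supported on $\operatorname{sgn}\lambda_f^*(p)=\operatorname{sgn}\lambda_g^*(p)$; these positivity conditions are what make the truncated resonator sums one-sided and hence usable for the upper bounds on $A_j$. Without the Cauchy--Schwarz reduction, your plan cannot produce the required input for the Chebyshev step, so the proof as proposed does not close.
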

\begin{rem}
	We give an explicit constant $c$ in terms of $f, g$ in the proof. In a generic situation, we can take $c={1}/{12\sqrt{10}}+o(1)$, which is half of the constant $c_{f, g}={1}/{6\sqrt{10}}+o(1)$ for the product in \eqref{fgchi}  \cite[Remarks 7.3, 7.20]{BFKMMS}, as one would expect. 
\end{rem}

We now describe our method. The idea is to use a resonator that picks out a large value of the product of the $L$-functions that, at the same time, is significantly bigger than their sum, thus giving simultaneous large values.
We detail this in the $t$-aspect, although the principle extends more generally. Our aim is to find a Dirichlet polynomial $R(t)$ such that for large $V$,
\begin{equation}\label{resineq}
\int_T^{2T}\Big(|L_1(\tfrac12+it)L_2(\tfrac12+it)|^2-V|L_1(\tfrac12+it)|^2-V|L_2(\tfrac12+it)|^2\Big)|R(t)|^2dt>0.
\end{equation}
If this holds then there exists a $t\in[T,2T]$ such that 
\begin{equation}\label{key ineq}
|L_1(\tfrac12+it)L_2(\tfrac12+it)|^2-V|L_1(\tfrac12+it)|^2-V|L_2(\tfrac12+it)|^2>0
\end{equation}
which implies that both $|L_1(1/2+it)|^2,|L_2(1/2+it)|^2>V$. 
We choose $R(t)$ to pick out large values of the product and once the asymptotic formulae for twisted second moments in \eqref{resineq} have been established, we can find the desired size for $V$. This approach uses the larger values of the product in a key way, but also includes the required upper bound information (as is necessary to rule out excessively large values of individual $L$-functions).

For multiple $L$-functions we can aim to find a value of $t$ for which
\begin{equation}\label{gen ineq}
\prod_{j=1}^m |L_j(\tfrac12+it)|^2-V\sum_{1\leqs i\leqs m}\prod_{\substack{j=1\\j\neq i}}^m|L_j(\tfrac12+it)|^2 >0.
\end{equation}
Unfortunately, asymptotic formulae for the twisted second moments of multiple or higher degree $L$-functions are currently out of reach.  
However, asymptotics are not strictly necessary since reasonably sharp bounds would suffice. A lower bound for the first term on the left of \eqref{gen ineq} can be achieved fairly easily through the Cauchy--Schwarz inequality. 
 This allows one to replace the second moment with a first moment which is more tractable. In fact, this first moment can be computed for any number of $L$-functions. 

To obtain upper bounds for the second term in \eqref{gen ineq} we note that there exist several instances in the literature \cite{DFL, GZ, LR} where, if one doesn't have access to asymptotics for twisted second moments, a sharp upper bound can still be achieved on the Riemann Hypothesis by applying the methods of Harper \cite{Ha}.  
As it stands, Harper's method is designed for a fixed $2k$-th moment of an $L$-function and thus sensitive to values of size $(\log T)^k$. 
However, with some modifications, in particular by focusing on very small primes, these methods can be made suitable for extreme values.  
With this in hand we can exhibit simultaneous large values for many $L$-functions in higher degrees under the Riemann Hypothesis for these $L$-functions. 

\begin{thm}\label{main thm 2} Let $\pi_j$, $j=1,\cdots, m$ be irreducible unitary cuspidal
	automorphic representations of $GL(d_j)$ over $\mathbb{Q}$ such that $\pi_i\not \cong \pi_j$ for $i\not=j$. Assume the generalised Riemann Hypothesis for all $L(s, \pi_j)$ $j=1, \dots, m$ and assume the generalised Ramanujan conjecture for $L(s, \pi_j)$ if $d_j\geqs 3$.
	 Then for sufficiently large $T$ we have
\[
\max_{t\in[T,2T]}\min(|L(\tfrac12+it,\pi_1)|,\ldots,|L(\tfrac12+it,\pi_m)|)\geqs \exp\bigg(c\sqrt{\frac{\log T}{\log\log T}}\bigg)
\]
for any positive constant $c< \frac{1}{\sqrt{2m}}$. 
\end{thm}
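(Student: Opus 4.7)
The plan is to implement the scheme encoded in \eqref{gen ineq}. Fix $c < 1/\sqrt{2m}$ and set $V = \exp(2c\sqrt{\log T/\log\log T})$. I would construct a multiplicative, square-free-supported Soundararajan-type resonator $R(t) = \sum_{n\leq N} r(n)\, n^{-it}$ with $N = T^{\theta}$ for a small $\theta > 0$, and show
$$\int_T^{2T}\bigg(\prod_{j=1}^m |L(\tfrac12+it,\pi_j)|^2 - V \sum_{i=1}^m \prod_{\substack{j=1\\ j\neq i}}^m |L(\tfrac12+it,\pi_j)|^2\bigg)|R(t)|^2 \, dt > 0.$$
Pointwise positivity at some $t\in[T,2T]$ then forces every $|L(1/2+it,\pi_j)|^2 > V$, giving the theorem.

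For the lower bound on the first term, write $L_\Pi := \prod_j L(\cdot,\pi_j)$ and apply Cauchy--Schwarz:
$$\int_T^{2T} |L_\Pi(\tfrac12+it)|^2 |R(t)|^2 \, dt \;\geq\; \frac{\big|\int_T^{2T} L_\Pi(\tfrac12+it)\,|R(t)|^2 \, dt\big|^2}{\int_T^{2T} |R(t)|^2 \, dt}.$$
The twisted first moment in the numerator is tractable via the approximate functional equation for $L_\Pi$, whose Dirichlet coefficients are the convolution $\lambda_\Pi = \lambda_{\pi_1} \ast \cdots \ast \lambda_{\pi_m}$. Rankin--Selberg orthogonality (using $\pi_i \not\cong \pi_j$ for $i\neq j$) yields $\sum_{p\leq x}|\lambda_\Pi(p)|^2/p = m\log\log x + O(1)$, so that a prime-by-prime optimisation of $r$ in the spirit of \cite{Sound res, AP} produces a lower bound of order $T\|R\|_2^2 \exp\big(2(1+o(1))\sqrt{m\log T/\log\log T}\big)$ after squaring.

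For the upper bound on each $\int \prod_{j\neq i}|L(\tfrac12+it,\pi_j)|^2 |R|^2\,dt$, I would follow the GRH-conditional Harper approach used in \cite{DFL, GZ, LR}, retuned for the extreme-value regime. Under GRH one has the pointwise estimate
$$\log|L(\tfrac12+it,\pi_j)| \;\leq\; \mathrm{Re}\sum_{p\leq x}\frac{\lambda_{\pi_j}(p)}{p^{1/2+it}} + \frac{\log T}{2\log x} + O(1),$$
with the prime-power tail absorbed using Ramanujan when $d_j \geq 3$. Exponentiating and splitting the primes into a very short range $p\leq x_0$ and a dyadic tower on $(x_0, x]$, the short piece combines cleanly with $|R(t)|^2$ to produce an Euler-product factor matching the positive side up to acceptable loss, while the tower pieces are controlled via Markov's inequality with large moments using Harper's sharp mean-value bounds for short Dirichlet polynomials over primes. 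The crucial change from the classical case is to take $x_0$ very small, so the argument is calibrated to values of size $\exp(c\sqrt{\log T/\log\log T})$ rather than to the polynomial scale $(\log T)^k$.

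Balancing the two bounds through the $V$ loss on the right, summing over $i$, and optimising the resonator length together with the cutoff $x_0$, one arrives at the admissible range $c < 1/\sqrt{2m}$. The main obstacle I anticipate is the third step: retuning Harper's machinery to the extreme-value scale while accounting for the twist by $|R|^2$, which is tuned to $\lambda_\Pi$ and hence interacts non-trivially with the coefficients $\sum_{j\neq i}\lambda_{\pi_j}$ of the left-out product, so that the tail moment estimate must remain effective uniformly in $i$. A subsidiary point of care is that Ramanujan is used precisely to sweep prime-power contributions cleanly into the error term, explaining its appearance in the hypothesis for $d_j \geq 3$.
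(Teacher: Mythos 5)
Your proposal follows the paper's argument step for step: the same inequality $\prod_j |L_j|^2 - V\sum_i\prod_{j\neq i}|L_j|^2 > 0$ to detect simultaneous large values, the same Cauchy--Schwarz reduction of the mixed second moment to a twisted first moment, and the same retuning of Harper's GRH machinery (small initial prime cutoff, truncated exponentials giving short Dirichlet polynomials) for the conditional upper bound, with the constant $c<1/\sqrt{2m}$ emerging from the resonator-length constraint $\Delta<1/2$. One small misattribution: the Ramanujan hypothesis for $d_j\geqs 3$ is not needed for the prime-power tail of the Chandee inequality, which the paper handles unconditionally for $d_j\leqs 3$ via $|a_j(p^\ell)|\ll 1+|a_j(p)|^\ell$ (Ramanujan is invoked there only when $d_j\geqs 4$); rather, it is used to secure the fourth-moment bound $\sum_{p\leqs x}|a_j(p)|^4/p\ll\log\log x$ needed to remove the truncation $|a_i(p)|\leqs(\log p)^{1-\epsilon}$ on the primes in the resonator's support, a technical restriction your sketch omits.
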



Additional work is required to remove the generalised Ramanujan conjecture in the case of Maa\ss\ forms. Throughout most of the proof we can work under weaker assumptions, in particular when computing the mean values. However in the final step when establishing extreme values more strict control on the size of $a_\pi(p)$ is required. One sufficient condition is a Mertens' type estimate for the fourth power moment: 
\begin{align*}
\sum_{p\leqs X}\frac{|a_\pi(p)|^4}{p}\ll \log\log X.
\end{align*}
Therefore, the assumption of the generalised Ramanujan conjecture can be avoided for $GL(2)$ Maa\ss\  forms by the functionality of symmetric powers established by Kim \cite{Kim} (and also for self-dual $GL(3)$ $L$-functions using Gelbart-Jacquet \cite{GJ} and Kim \cite{Kim}, although we do not state this in Theorem \ref{main thm 2} for concision). 
 

We also remark that Theorem 3 can be proved unconditionally for three distinct Dirichlet $L$-functions. Here, the upper bounds for the twisted second moments of $L(1/2+it,\chi_i)L(1/2+it,\chi_j)$ are (essentially) available being generalisations of the twisted fourth moment of the Riemann zeta function \cite{BBLR, Htwist,HY}.

The versatility of the resonance method in families of $L$-functions extends to simultaneous values, both large and small. Our methods allow for a general principle which we now describe. 
Let ${\Pi}$ be some family and let $\{L(s,f_j)\}_{j=1}^m$ be fixed $L$-functions. Suppose we can lower bound  
\begin{equation}\label{first moment}
\sum_{\pi\in\Pi} \prod_{j=1}^mL(1/2, f_j\otimes \pi)|R(\pi)|^2
\end{equation}
and upper bound  
\begin{equation}\label{abs second mmt}
\sum_{\pi\in\Pi} \prod_{j\neq i}|L(1/2, f_j\otimes \pi)|^2|R(\pi)|^2
\end{equation}
in a reasonably sharp way. Then via the inequality \eqref{gen ineq} one can show the existence of $\pi\in \Pi$ such that $|L(1/2,f_j\otimes \pi)|$ are large simultaneously. Here, one uses the Cauchy--Schwarz inequality to get a lower bound for the mixed absolute second moment using  \eqref{first moment} together with an estimate for $\sum_{\pi \in \mathcal F}|R(\pi)|^2$. If upper bounds for \eqref{abs second mmt} are not immediately accessible, then one can apply our adaption of Harper's methods (Section \ref{cond upper sec}) to give conditional results.

The situation for simultaneous small values is somewhat simpler. Here, we just require upper bounds for the sum 
\begin{equation}\label{abs second mmt 2}
\sum_{\pi\in\Pi} \sum_{j=1}^m|L(1/2, f_j\otimes \pi)|^2|R(\pi)|^2
\end{equation}
along with the simple fact that for non-negative $a,b$, the inequality  $a+b\leqs V$ implies $a,b\leqs V$. 
In both the large and small value cases, the resonator should be chosen to pick out extreme values of the full product of $L$-functions.  

We illustrate this principle in two families. As we have seen already in Theorem \ref{mod q twist thm}, it can be applied to give simultaneous large central values of $L$-functions of $GL(2)$ cusp forms twisted by Dirichlet characters modulo $q$ unconditionally, since the second moment theory has been well developed \cite{BFKMMS}.


For simultaneous small values, we consider the family of holomorphic cusp forms twisted by quadratic characters $\chi_d(\cdot)=(\frac{d}{\cdot})$. Here, the mixed first moment
\[
\sum_{\substack{0<d\leqs X}}L(1/2,f\otimes \chi_{d})L(1/2,g\otimes \chi_{d})
\] 
 is still unknown (though significant progress has been made recently by X. Li \cite{XL}). 
Consequently, the simultaneous non-vanishing of quadratic twists of cusp forms remains an open question. Nevertheless, we can show that there are infinitely many $d$ such that $L(1/2,f\otimes \chi_{d})$ and $L(1/2,g\otimes \chi_{d})$ get very small simultaneously. 

\begin{thm}\label{quad twist thm}Let $f,g$ be holomorphic cusp forms of weight $\kappa\equiv 0 \bmod 4$ for $SL_2(\mathbb{Z})$ and let $\chi_{d}(\cdot)=(\frac{d}{\cdot})$ be the Kronecker symbol. Then for large $X$ there exists $X\leqs d\leqs 2X$ and some $c>0$ such that 
	\[
	\max(L(1/2, f\otimes \chi_{d}),L(1/2,g\otimes \chi_{d}))\ll \exp\bigg(-c\sqrt{\frac{\log X}{\log\log X}}\bigg). 
	\]
\end{thm}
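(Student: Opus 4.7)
The plan is to exploit the non-negativity of the central values. For $f$ a holomorphic cusp form of weight $\kappa \equiv 0 \bmod 4$ for $SL_2(\Z)$ and $d>0$ a fundamental discriminant, the sign of the functional equation of $L(s,f\otimes\chi_d)$ is $i^\kappa \chi_d(-1) = +1$, so by Kohnen--Zagier (or more generally Waldspurger) the central value $L(1/2, f\otimes\chi_d) \geqs 0$, and likewise for $g$. Consequently $L(1/2, f\otimes\chi_d) + L(1/2, g\otimes\chi_d) \leqs V$ immediately forces $\max(L(1/2,f\otimes\chi_d),L(1/2,g\otimes\chi_d)) \leqs V$, and the theorem reduces to producing a resonator $R$ for which
\[
\mathcal R(X) := \frac{\sum_{d} \big(L(1/2,f\otimes\chi_d) + L(1/2,g\otimes\chi_d)\big) |R(d)|^2}{\sum_{d}|R(d)|^2} \leqs \exp\bigg(-c\sqrt{\frac{\log X}{\log\log X}}\bigg),
\]
where the sums run over fundamental discriminants $d\in[X,2X]$.

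Following the general principle described in the introduction, I would take $R(d) = \sum_{n\leqs N} r(n)\chi_d(n)$ for a multiplicative function $r$ supported on squarefree $n$ with $N = \exp(A\sqrt{\log X\log\log X})$ and $A$ small. Expanding $|R(d)|^2 = \sum_{m_1,m_2} r(m_1)r(m_2)\chi_d(m_1m_2)$ and invoking the standard orthogonality of quadratic characters (main term on $m_1m_2 = \square$, with off-square error estimated by P\'olya--Vinogradov or Heath-Brown's large sieve), the denominator factorises as an Euler product of the form $c_0 X\prod_{p\leqs N}(1+\alpha_p r(p)^2)$ up to negligible error, provided $N\leqs X^{1/2-\epsilon}$. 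For the numerator I would plug in the approximate functional equation
\[
L(1/2,f\otimes\chi_d) = 2\sum_n \frac{\lambda_f(n)\chi_d(n)}{\sqrt n}V_f(n/X),
\]
and analogously for $g$, interchange the order of summation, and apply character orthogonality once more. The resulting main term factorises as a shifted Euler product whose local factor at $p$ depends linearly on $\lambda_f(p)+\lambda_g(p)$, on $r(p)$, and on their product.

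The optimisation of the ratio then decouples across primes, and one is led to choose $r(p)$ of size roughly $-\eta/\sqrt{p\log\log X}$ on a dyadic window of small primes, with a sign chosen so that the cross term $(\lambda_f(p)+\lambda_g(p))r(p)/\sqrt p$ in each local factor contributes negatively on average. By Rankin--Selberg applied to $f\otimes f$, $g\otimes g$ and $f\otimes\bar g$ one has $\sum_{p\leqs y}(\lambda_f(p)+\lambda_g(p))^2/p \gg \log\log y$, which guarantees a non-trivial saving at each prime; assembling these local savings in the usual Soundararajan fashion and optimising in $A$ yields the desired bound $\mathcal R(X)\leqs \exp(-c\sqrt{\log X/\log\log X})$. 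The main technical obstacle is controlling the error terms in two places: the off-square character sum $\sum_{d\sim X}\chi_d(k)$ for $k\neq\square$, and the dual term in the approximate functional equation. Both require the resonator length $N$ to stay strictly below $X^{1/2}$ (limiting the admissible constant $c$), and the second additionally requires the standard convexity estimate for $L(s,f\otimes\chi_d)$ on the critical line to ensure that the tail $n\gg X^{1+\epsilon}$ in the approximate functional equation is genuinely negligible against the diagonal contribution.
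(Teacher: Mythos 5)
Your overall plan matches the paper's: exploit the non-negativity of $L(1/2,f\otimes\chi_d)$ (a consequence of Kohnen--Zagier/Waldspurger, given $\kappa\equiv 0\bmod 4$), reduce to making the resonated first moment of the sum small, and compare to $\sum_d|R(d)|^2$. The paper proceeds exactly this way, citing Shen's asymptotic for the twisted first moment $\sum^*_{(d,2)=1}\chi_{8d}(\ell)L(1/2,f\otimes\chi_{8d})\Phi(d/X)$ in place of your from-scratch approximate functional equation computation, and then running the BFKMMS Euler-product machinery.

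However, your resonator is constructed incorrectly and would not produce the claimed bound. First, the length: you take $N=\exp(A\sqrt{\log X\log\log X})$, which is sub-polynomial in $X$. With that choice, the relevant parameter $\mc{L}\asymp\sqrt{\log N\log\log N}\asymp(\log X)^{1/4}(\log\log X)^{3/4}$, and the best possible saving from the resonance method is $\exp(-c\,\mc{L}/\log\mc{L})=\exp(-c'(\log X/\log\log X)^{1/4})$, a fourth root rather than a square root. You need $N$ to be a small \emph{power} of $X$; the paper uses $N=X^{1/5-\delta}$ (forced by Shen's error term $O(\ell^{1/2+\epsilon}X^{1/2+\epsilon})$, which after summing over resonator indices gives $O(N^{5/2+\epsilon}X^{1/2+\epsilon})$, so the constraint is actually $N<X^{1/5}$, not $N<X^{1/2}$ as you state). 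Second, the weights: taking $r(p)\approx -\eta/\sqrt{p\log\log X}$ on a single dyadic window makes the cross term $\sum_p r(p)(\lambda_f(p)+\lambda_g(p))/\sqrt{p}$ of bounded size, and the invocation of $\sum_p(\lambda_f(p)+\lambda_g(p))^2/p\gg\log\log y$ is inconsistent with this choice, since the square only materialises if $r(p)$ itself carries a factor proportional to $\lambda_f(p)+\lambda_g(p)$. The correct construction is the Soundararajan shape: a multiplicative coefficient carrying the arithmetic data of $f$ and $g$, of size $\asymp\mc{L}/(\sqrt{p}\log p)$, supported on the range $\mc{L}^2<p\leqs\exp((\log\mc{L})^2)$ with $\mc{L}\asymp\sqrt{\log N\log\log N}$. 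Concretely, the paper takes $R(\chi_{8d})=\sum_{n\leqs N}\mu(n)r(n)\varpi(n)\chi_{8d}(n)$ with $\varpi(p)=h_f(p)h_g(p)(h_f(p)+h_g(p))$ supported on primes where $h_f(p)$ and $h_g(p)$ agree in sign (here $h_f(p)=\lambda_f(p)+O(1/p)$ are the local weights coming from Shen's formula), and the $\mu(n)$ flips the sign of the cross term so that \emph{each} local factor is $\leqs 1$. Restricting to the sign-agreement set is not merely cosmetic: it makes the terms in the unrestricted $\ell,\ell'$-sum nonnegative, which is what lets one bound $\sum_{\ell,\ell'\leqs N/d}$ from above by dropping the cutoff. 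Arranging only for a negative contribution "on average," as you propose, would require a separate Rankin-type argument to handle the truncation.
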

We remark that this result is unconditional since we can avoid the absolute second moments in \eqref{abs second mmt 2} (which are currently out of reach) and work directly with $L(1/2, f\otimes \chi_{d})$ as we already have non-negativity: $L(1/2, f\otimes \chi_{d})\geqs 0$. This surprising fact is known unconditionally from the formula of Waldspurger \cite{Wa} (see also \cite{KZ}).
In generic situations, one can take $c={1}/{\sqrt{5}}+o(1)$.

There are several other possibilities for simultaneous extreme values in families of $L$-functions. Examples of significant arithmetic interest are given by the families
\[
\{L(1/2,f), L(1/2,f\otimes \chi_D): f\in\mathcal{F} \}
\]
where $\chi_D$ is a fixed quadratic character and $\mc{F}$ is either the family of Hecke eigencuspforms of even weight $k$ for the full modular group with $k$ tending to infinity, or the family of holomorphic newforms of fixed even weight $k$ for the congruence subgroup $\Gamma_0(N)$ with $N$ tending to infinity. In both the weight and (squarefree) level aspects, the required second moment formulae can be computed using  Petersson's formula (see \cite{D, IS, KMV, Sound res} for example). 
We also mention that for a large prime $q$, Dirichlet characters $\omega_1, \omega_2$ modulo $q$ and $f$ a Hecke eigenform for $SL_2(\mb{Z})$ (holomorphic or Maa\ss\ \!\!), the simultaneous extreme values for the families 
\begin{align*}
\{L(1/2, \chi), L(1/2, \omega_1\chi), L(1/2, \omega_2\chi): \chi \bmod q\}\\
\{L(1/2, f\otimes \chi), L(1/2, \chi): \chi \bmod q\}
\end{align*}
could be established using work of Zacharias \cite{Z} under GRH. 

We close this introduction with a few remarks on similarities with previous works and on the difficulties in extending our results to the strength of \eqref{larger}.  
We note that our proof utilises some control on both upper and lower bounds for $L$-functions. A similar idea has appeared in the recent work of Gun--Kohnen--Soundararajan \cite{GKS} where they demonstrated large central values of linear combinations of $L$-functions by making one $L$-function large and at the same time keeping all other $L$-functions smaller. In contrast, we  exhibit simultaneous extreme central values in families of $L$-functions where all of the $L$-functions attain large or small values.    

A very natural question is whether one can attain simultaneous values of the strength of Bondarenko--Seip \cite{BS} given in \eqref{larger}.  A key source of their improvement was the use of a resonator with support in numbers that are much bigger than $T$, although this results in considerable difficulties. First, to lower bound 
\[
\int_1^{T}L_1(\tfrac12+it)L_2(\tfrac12+it)|R(t)|^2dt
\]
with such a resonator we can require that the coefficients of $L_1(s)L_2(s)$ be positive after inserting a smooth weight with positive transform,  as in \cite{BS}. This could be resolved with Dedekind zeta functions, for example.   
However, finding an upper bound for  
\[
\int_1^T|L_j(\tfrac12+it)|^2 |R(t)|^2dt 
\] 
when $R$ is a long resonator seems a more substantial obstacle. In \cite{BS}, to get an upper bound for $\int |R(t)|^2dt$, the resonator was chosen to have well-spaced phases but unfortunately it is not clear how such an $R$ interacts with $|L_j(1/2+it)|^2$. If this issue could be overcome then one could deduce bounds of the form \eqref{larger} for Dirichlet $L$-functions, or indeed any other $L$-function with negative coefficients provided they appear as a factor in a Dedekind zeta function. 

\begin{ack}We would like to thank Edgar Assing and Peter Humphries for helpful discussions. We also thank Jesse Thorner for valuable remarks on a preliminary version of this paper.  

\end{ack}


\section{Background on automorphic $L$-functions}

In this section we collect some basic facts about the class of $L$-functions used in our $t$-aspect results. These can be found in many places, see for example \cite{MTB, RS}.  

Let $L(s,\pi)$ be the $L$-function attached to an irreducible cuspidal automorphic representation $\pi$ of $\mathrm{GL}(d)$ over $\mb{Q}$ normalised such that $\pi$ has unitary central character.
In the region $\sigma>1$ we have 
\begin{equation}\label{L series/prod}
L(s,\pi)
=
\sum_{n=1}^\infty \frac{A_\pi(n)}{n^s}
=
\prod_p\prod_{j=1}^d\bigg(1-\frac{\alpha_{\pi, j}(p)}{p^s}\bigg)^{-1}
\end{equation}
for some 
complex coefficients $A_\pi(n)$ and $\alpha_j(p)$. If $d=1$ and $\pi$ is the trivial representation then $L(s,\pi)$ is given by the Riemann zeta function. Otherwise, it extends to an entire function satisfying the functional equation 
\[
\Phi(s,\pi):=N^{s/2}\gamma(s,\pi)L(s,\pi)=\epsilon_\pi \overline{\Phi}(1-s,\pi) 
\]
where $N\in\mb{N}$, $|\epsilon_\pi|=1$, $\overline{\Phi}(s,\pi)=\overline{\Phi(\ol{s},\pi)}$ and 
$
\gamma(s,\pi)=\pi^{-ds/2}\prod_{j=1}^d \Gamma\Big(\frac{s+\mu_{\pi, j}}{2}\Big)
$
for some complex numbers $\mu_{\pi, j}$ satisfying $\Re\mu_{\pi, j}>-1$. 


Applying Stirling's formula to $\gamma(s,\pi)$ along with the Phragmen--Lindel\"of principle, we see that 
\begin{equation}\label{phrag}
L(\sigma+it,\pi)\ll (N|t|^d)^{(1-\sigma)/2+\epsilon}
\end{equation}
in the strip $-\delta\leqs \sigma\leqs 1+\delta$ for large $|t|$ (see \cite{IK} for example). 

In our conditional results we make key use of Euler products, so we collect some further practical bounds here. For $\sigma>1$ on differentiating the Euler product we see
\begin{align}\label{apidef}
-\frac{L^\prime}{L}(s,\pi) 
=
\sum_{p^\ell\geqs 2}\frac{\log p\sum_{j=1}^d \alpha_{\pi, j}(p)^\ell}{p^{\ell s}}
=:
\sum_{n\geqs 2}\frac{\Lambda(n) a_\pi(n)}{n^s}
\end{align} where $\Lambda(n)$ is the von-Mangoldt function.

The generalised Ramanujan conjecture asserts that $|\alpha_{\pi, j}(p)|=1$ for all but a finite number of primes and satisfies $|\alpha_{\pi,j}(p)|\leqs 1$ elsewhere, although this remains open in general. Rudnick--Sarnak \cite{RS} have shown that
\begin{equation}\label{RS bound}
|\alpha_{\pi, j}(p)|\leqs p^{1/2-1/(d^2+1)}
\end{equation}
for all primes $p$. 
This bound implies that 
\begin{equation}\label{a bound}
|a_\pi(p^\ell)|=|\sum_{j=1}^d \alpha_{\pi, j}(p)^\ell|\leqs d p^{\ell(1/2-1/(d^2+1))}
\end{equation}
and, by \eqref{L series/prod}, that
\begin{equation}\label{coeff bound}
|A_\pi(n)| \leqs \tau_d(n)n^{1/2-1/(d^2+1)}
\end{equation}
where $\tau_d$ is the generalised divisor function. Another useful bound is that when the degree satsifies $d\leqs 3$, 
\begin{equation}\label{a pp bound}
\qquad |a_j(p^\ell)|\ll 1+ |a_j(p)|^\ell,\qquad d\leqs 3
\end{equation}
for fixed prime powers $\ell$. This is given in the proof of Proposition 2.4 of \cite{RS}.

In many situations, the assumption of the generalised Ramanujan conjecture can be replaced by Hypothesis H introduced by Rudnick-Sarnak \cite{RS}, which states that for fixed $\ell\geqs 2$, 
\begin{equation}\label{H hyp} 
\sum_{p}\frac{| a_\pi(p^\ell)|^2(\log p)^2}{p^\ell}<\infty.
\end{equation}
Clearly, this follows from the the generalised Ramanujan conjecture. Unconditionally, this was shown to hold for $d\leqs 3$ by Rudnick--Sarnak \cite{RS} using \eqref{a pp bound} and for $d=4$ by Kim \cite{Kim}. 
As a replacement for the generalised Ramanujan conjecture, Hypothesis H has previously been used in mean value results \cite{MTB}. In our case it could be used to weaken the assumptions of Proposition \ref{gen upper bound prop} below, but at the cost of considerable extra technicalities. We settle for using \eqref{a pp bound}, which restricts our unconditional results to $d\leqs 3$ rather than $d\leqs 4$, since in the end we require stronger coefficient bounds to deduce our final results.          

The following three results are the key average properties we require for $a_\pi(p)$, all of which hold unconditionally when $d\leqs 2$.
The first is the orthogonality conjecture of Selberg, as shown in full generality by Liu--Ye \cite{LY} building on previous works \cite{LY0, LY1, LiuYe}.

\begin{ethm}[Selberg's orthogonality conjectures] Let $\pi$, $\pi^\prime$ be two irreducible unitary cuspidal automorphic representations of $\mathrm{GL}(d)$, $\mathrm{GL}(d^\prime)$ over $\mb{Q}$, respectively. If \eqref{H hyp} holds, then
 \begin{equation}\label{Selberg conj 1}
 \sum_{p\leqs x}\frac{a_\pi(p)\ol{a_{\pi^\prime}(p)}}{p}
 =
\begin{cases}
\log\log x+O(1) & \text{if} \,\,\,\,\,\pi \cong \pi^\prime,
\\
O(1)  & \text{otherwise.} 
\end{cases}
 \end{equation}
 In particular, \eqref{Selberg conj 1} holds if $\max(d,d')\leqs 4$ or on assuming the generalised Ramanujan conjecture.
\end{ethm}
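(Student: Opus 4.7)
The plan is to reduce the claim to the prime number theorem for the Rankin--Selberg $L$-function $L(s,\pi\times\widetilde{\pi}')$, and then extract the partial sum by Abel summation.

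First, at each unramified prime $p$, the local Langlands parameters of $\pi\times\widetilde{\pi}'$ are the products $\{\alpha_{\pi,j}(p)\overline{\alpha_{\pi',k}(p)}\}_{j,k}$, which gives
\[
a_{\pi\times\widetilde{\pi}'}(p)=\sum_{j,k}\alpha_{\pi,j}(p)\overline{\alpha_{\pi',k}(p)}=a_\pi(p)\,\overline{a_{\pi'}(p)}.
\]
Thus the sum of interest is, up to finitely many bad primes, the prime-support part of the coefficients of $-L'/L(s,\pi\times\widetilde{\pi}')$, divided by $\log p$.

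Next, I would invoke the analytic theory of $L(s,\pi\times\widetilde{\pi}')$: by Jacquet--Shalika the function is entire except for a simple pole at $s=1$, which is present exactly when $\pi\cong\pi'$; by Shahidi it is non-vanishing on $\Re s=1$; and it admits the standard functional equation so that one has a zero-free strip along the edge $\Re s=1$. Combining these with a contour shift (essentially Perron's formula applied to a smoothed cutoff and a shift to a line $\Re s=1-c/\log x$), one obtains the automorphic prime number theorem
\[
\sum_{n\leq x}\Lambda(n)a_{\pi\times\widetilde{\pi}'}(n)=\delta_{\pi\cong\pi'}\,x+O\!\left(\frac{x}{\log x}\right),
\]
which is the content of the Liu--Ye result (and its precursors in \cite{LY0, LY1, LiuYe}).

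Third, I separate the prime contribution from the higher prime power contribution. By Cauchy--Schwarz,
\[
\sum_{p,\,\ell\geq 2}\frac{|a_{\pi}(p^\ell)\overline{a_{\pi'}(p^\ell)}|\log p}{p^\ell}
\leq \sum_{\ell\geq 2}\Big(\sum_p\tfrac{|a_\pi(p^\ell)|^2(\log p)^2}{p^\ell}\Big)^{1/2}\Big(\sum_p\tfrac{|a_{\pi'}(p^\ell)|^2}{p^\ell}\Big)^{1/2},
\]
and the hypothesis \eqref{H hyp} (hence the stated cases $d,d'\leq 4$ or GRC) makes the inner sum finite for each $\ell$, with enough decay in $\ell$ to ensure overall convergence. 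So the prime-power correction at the level of $\sum_{p^\ell\leq x}\Lambda(p^\ell)a_{\pi\times\widetilde{\pi}'}(p^\ell)/p^\ell$ is bounded.

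Finally, two applications of Abel summation transform the PNT into the claim: from $\sum_{p\leq x}a_\pi(p)\overline{a_{\pi'}(p)}\log p=\delta_{\pi\cong\pi'}\,x+O(x/\log x)$ one deduces
\[
\sum_{p\leq x}\frac{a_\pi(p)\overline{a_{\pi'}(p)}\log p}{p}=\delta_{\pi\cong\pi'}\log x+O(1),
\]
and then another partial summation yields the $\log\log x+O(1)$ asymptotic (respectively $O(1)$). The main obstacle, and the deep input supplied by \cite{LY}, is the non-vanishing of $L(s,\pi\times\widetilde{\pi}')$ on the line $\Re s=1$ together with a standard-quality zero-free region; without the latter one only obtains $o(\log\log x)$ rather than $O(1)$. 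Everything else is essentially bookkeeping via the Euler product expansion and Abel summation, with Hypothesis H used purely to tame the contribution of prime powers $p^\ell$ with $\ell\geq 2$.
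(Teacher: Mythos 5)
Your proposal is essentially the standard route and matches the approach of the Liu--Ye papers \cite{LY0,LY1,LY,LiuYe} that the paper cites for this (unnumbered, quoted) theorem; the paper itself does not reprove it, but its proof of the adjacent Proposition \ref{res sum bound cor} uses exactly the same ingredients — the Liu--Ye prime number theorem \eqref{LY pnt} for the Rankin--Selberg convolution, Cauchy--Schwarz against Hypothesis~H to control the prime-power contribution, and partial summation. One small point worth tightening: Hypothesis~H by itself only gives finiteness of $\sum_p |a_\pi(p^\ell)|^2(\log p)^2/p^\ell$ for each \emph{fixed} $\ell\geqs 2$, not uniform decay in $\ell$; to sum over all $\ell\geqs 2$ you should, as the paper does for Proposition \ref{res sum bound cor}, split at $\ell \leqs d^2+1$ (handled by Hypothesis~H and Cauchy--Schwarz) and $\ell > d^2+1$ (handled by the Rudnick--Sarnak pointwise bound \eqref{RS bound}, which forces geometric decay).
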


This essentially implies the following bounds for our resonator sums. 

\begin{prop}\label{res sum bound cor} Let $\pi$, $\pi^\prime$ be two irreducible unitary cuspidal automorphic representations of $\mathrm{GL}(d)$, $\mathrm{GL}(d^\prime)$ over $\mb{Q}$, respectively. If \eqref{H hyp} holds then for large $x,y$,
 \begin{equation}\label{res sum bound}
 \sum_{x<p\leqs y}\frac{a_\pi(p)\ol{a_{\pi^\prime}(p)}}{p\log p}
 =
\begin{cases}
 \frac{1}{\log x}- \frac{1}{\log y}+O\Big(\frac{1}{(\log x)^2}\Big) & \text{if} \,\,\,\,\,\pi \cong \pi^\prime,
\\
O\Big(\frac{1}{(\log x)^2}\Big) & \text{otherwise.} 
\end{cases}
 \end{equation}
 In particular, this holds if $\max(d,d^\prime)\leqs 4$ or on assuming the generalised Ramanujan conjecture. 
\end{prop}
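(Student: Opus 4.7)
The plan is to deduce the stated estimate for $\sum_{x<p\leqs y} b_p/(p\log p)$, where $b_p := a_\pi(p)\ol{a_{\pi^\prime}(p)}$, directly from the preceding Selberg's orthogonality theorem by partial summation. Setting $S(u) := \sum_{p \leqs u} b_p/p$ and using the smooth weight $g(u) = 1/\log u$, Abel summation gives
\begin{equation}
\sum_{x < p \leqs y} \frac{b_p}{p \log p} = \frac{S(y)}{\log y} - \frac{S(x)}{\log x} + \int_x^y \frac{S(t)}{t (\log t)^2} \, dt.
\end{equation}
One then inserts the asymptotics for $S(t)$ supplied by Selberg's orthogonality, exploits the antiderivative identity
\[
\int \frac{\log \log t}{t (\log t)^2} \, dt = -\frac{\log \log t + 1}{\log t} + C,
\]
and watches the leading terms collapse.

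In the isomorphic case $\pi \cong \pi^\prime$ I would write $S(t) = \log \log t + c + \delta(t)$ with $c$ a constant and $\delta(t) \to 0$. The $\log \log t$ part contributes boundary data $\log \log y / \log y - \log \log x / \log x$ that is exactly cancelled by the main part of the integral via the antiderivative above, producing the leading term $1/\log x - 1/\log y$. The constant $c$ appears identically in the boundary and in the integral and cancels. The non-isomorphic case is cleaner: there $S(t) = c^\prime + \delta(t)$, the $c^\prime$ terms cancel completely between boundary and integral, and only the $\delta$ contributions remain.

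The delicate point, and in my view the main substantive step, is quantifying $\delta(t)$ to recover the claimed error $O(1/(\log x)^2)$ rather than the weaker $O(1/\log x)$ that would come from treating the $O(1)$ remainder in Selberg's theorem as a black box. The sharpening $\delta(t) = O(1/\log t)$ is available from a prime number theorem for the relevant Rankin--Selberg $L$-function, which under Hypothesis H is analytic at $s = 1$ (with a simple pole precisely when $\pi \cong \pi^\prime$) and admits the standard logarithmic zero-free region. Granted $\delta(t) = O(1/\log t)$, the boundary terms are each $O(1/(\log x)^2)$, while the residual integral is controlled by $\int_x^\infty dt/(t (\log t)^3) = O(1/(\log x)^2)$, yielding the required bound. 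Once this quantitative refinement of Selberg's orthogonality is in hand, the deduction is pure bookkeeping.
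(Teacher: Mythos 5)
Your partial-summation framework and the cancellation mechanism are correct as far as they go, but the input you need is not the one the paper has at its disposal, and you have not closed that gap. You correctly observe that the quoted Selberg orthogonality with its $O(1)$ error is too weak, and you correctly identify that $S(t)=\sum_{p\leqs t}a_\pi(p)\ol{a_{\pi'}(p)}/p=\log\log t + c + O(1/\log t)$ would suffice. But that sharpened Mertens-type estimate is not what Liu--Ye (the reference the paper actually uses) provides, and your justification for it is off in two ways. First, Hypothesis H has nothing to do with $L(s,\pi\times\tilde\pi')$ being meromorphic at $s=1$ --- that is the unconditional Rankin--Selberg theory; what Hypothesis H controls is the passage from $\log L$ (which is naturally a sum over prime powers) to a sum over primes alone, precisely the step you do not address. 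Second, extracting $\delta(t)=O(1/\log t)$ from a ``standard logarithmic zero-free region'' for $L(s,\pi\times\tilde\pi')$ is a genuine analytic input of PNT strength; it is not a freebie in this generality and is not among the paper's cited ingredients.

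The paper's proof sidesteps all of this by a different decomposition. It first splits the prime sum into the full von Mangoldt-weighted sum $\sum_{x<n\leqs y}\Lambda(n)a_\pi(n)\ol{a_{\pi'}(n)}/(n(\log n)^2)$ minus an explicit prime-power correction, and controls the latter using the coefficient bound \eqref{coeff bound} for $\ell>d^2+1$ and Cauchy--Schwarz together with Hypothesis H for $2\leqs\ell\leqs d^2+1$. It then applies partial summation not to the raw Mertens sum but to the log-weighted von Mangoldt sum $S(x)=\sum_{n\leqs x}(\log n)\Lambda(n)a_\pi(n)\ol{a_{\pi'}(n)}/n$, for which Liu--Ye give $\tfrac12(\log x)^2+O(\log x)$ in the isomorphic case and $O(\log x)$ otherwise --- an estimate of only Mertens-level precision, not requiring a zero-free region. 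Writing the target sum as $\sum c_n/(\log n)^3$ and using Abel summation, the boundary terms contribute $\tfrac12/\log y-\tfrac12/\log x$ and the integral $3\int_x^y S(t)/(t(\log t)^4)\,dt$ contributes $\tfrac32(1/\log x - 1/\log y)$, with the $O(\log t)$ remainder feeding into $O(1/(\log x)^2)$; the sum collapses to $1/\log x - 1/\log y + O(1/(\log x)^2)$. The extra logarithmic weight is exactly what turns a ``soft'' error term into the required $O(1/(\log x)^2)$ without invoking a zero-free region. You should either (a) adopt the paper's weighted sum so that Liu--Ye's estimate suffices, explicitly handle the prime-power correction, and then the deduction really is pure bookkeeping, or (b) if you want to keep the unweighted $S$, supply a genuine reference or proof of the sharpened Mertens estimate, including the prime-power stripping.
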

\begin{proof}
The sum is given by 
\[
 \sum_{x<p\leqs y}\frac{a_\pi(p)\ol{a_{\pi^\prime}(p)}}{p\log p}
 =
 \sum_{x<n\leqs y}
 \frac{\Lambda(n)a_\pi(n)\ol{a_{\pi^\prime}(n)}}{n(\log n)^2}
 -
 \sum_{x<p^\ell\leqs y,\ell\geqs 2}\frac{a_\pi(p^\ell)\ol{a_{\pi^\prime}(p^\ell)}}{\ell^2p^\ell \log p}.
\] 
 To estimate the second sum we note that the contribution from $\ell\geqs d^2+1$ can be bounded using \eqref{coeff bound} by
 \[
 \ll \frac{1}{(\log x)^2}\sum_{p}\sum_{\ell\geqs d^2+1}\frac{\log p}{p^{2\ell/d^2+1}}\ll \frac{1}{(\log x)^2}.
 \] 
For $\ell\leqs d^2+1$ we use Cauchy-Schwarz and \eqref{H hyp} to obtain
  \[
 \ll \frac{1}{(\log x)^2}\sum_{\substack{x<p^\ell\leqs y\\2\leqs \ell\leqs d^2+1}}\frac{\log p|a_\pi(p^\ell)\ol{a_{\pi^\prime}(p^\ell)|}}{p^{\ell}}\ll \frac{1}{(\log x)^2}.
 \] 
 To compute the first sum we apply partial summation along with the bounds 
\begin{equation}\label{LY pnt}
S(x):=\sum_{n\leqs x}\frac{(\log n)\Lambda(n)a_\pi(n)\ol{a_{\pi^\prime}(n)}}{n}=
\begin{cases}
\tfrac12(\log x)^2+O(\log x) & \text{if} \,\,\,\,\,\pi \cong \pi^\prime,
\\
O(\log x) & \text{otherwise.} 
\end{cases}
\end{equation}
of  \cite{LY}.
\end{proof}

We shall use one more type of bound, which can be used to avoid the assumption of the generalised Ramanujan conjecture in some cases. 

\begin{ethm}[Fourth moment bounds] Suppose $d\leqs 2$ or that $d=3$ and $\pi$ is self-dual. Then
\begin{align}\label{fourth moment}
\sum_{p\leqs x}\frac{|a_\pi(p)|^4}{p}\ll \log\log x.
\end{align}
\end{ethm}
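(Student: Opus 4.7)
My plan is to express $|a_\pi(p)|^4$ as a linear combination of Hecke eigenvalues of automorphic $L$-functions of low degree, then invoke Rankin--Selberg theory (or Selberg's orthogonality) for each piece.

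The case $d=1$ is immediate from $|a_\pi(p)|\leqs 1$ and Mertens' theorem. For $d=2$, at an unramified prime with Satake parameters $\{\alpha,\beta\}$ satisfying $\alpha\beta=\omega_\pi(p)$, the direct calculation $(\alpha+\beta)^2 = (\alpha^2+\alpha\beta+\beta^2)+\alpha\beta$ yields the identity $a_\pi(p)^2 = a_{\mathrm{Sym}^2\pi}(p) + \omega_\pi(p)$, so taking absolute values gives
\[
|a_\pi(p)|^4 = |a_\pi(p)^2|^2 \leqs 2|a_{\mathrm{Sym}^2\pi}(p)|^2 + 2.
\]
Since $\mathrm{Sym}^2\pi$ is automorphic on $GL(3)$ by Gelbart--Jacquet, the Rankin--Selberg $L$-function $L(s,\mathrm{Sym}^2\pi\times\widetilde{\mathrm{Sym}^2\pi})$ has a simple pole at $s=1$. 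A standard Tauberian argument applied to its logarithmic derivative, with the prime-power contribution controlled by Hypothesis H on $GL(3)$ (which is unconditional by Rudnick--Sarnak), yields $\sum_{p\leqs x}|a_{\mathrm{Sym}^2\pi}(p)|^2/p \ll \log\log x$. Combined with Mertens for the constant term, this concludes $d=2$.

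For $d=3$ self-dual, I first invoke the classification of self-dual cuspidal representations of $GL(3)$: being necessarily of orthogonal type, such a $\pi$ arises as the Gelbart--Jacquet lift $\pi = \mathrm{Sym}^2 f$ of a cuspidal form $f$ on $GL(2)$ (modulo central character adjustments). The Clebsch--Gordan decomposition $\mathrm{Sym}^2\otimes\mathrm{Sym}^2 = \mathrm{Sym}^4\oplus\mathrm{Sym}^2\oplus\mathbf{1}$ translates at the level of Satake parameters to
\[
a_\pi(p)^2 = a_{\mathrm{Sym}^4 f}(p) + a_{\mathrm{Sym}^2 f}(p) + 1,
\]
from which one obtains
\[
|a_\pi(p)|^4 \leqs 3\big(|a_{\mathrm{Sym}^4 f}(p)|^2 + |a_{\mathrm{Sym}^2 f}(p)|^2 + 1\big).
\]
The Sym-$2$ sum is handled as in the $d=2$ case. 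For the Sym-$4$ sum, Kim's theorem provides the automorphy of $\mathrm{Sym}^4 f$ on $GL(5)$, so that the Rankin--Selberg $L(s,\mathrm{Sym}^4 f\times\widetilde{\mathrm{Sym}^4 f})$ has a simple pole at $s=1$, and an analogous Tauberian analysis delivers $\sum_p|a_{\mathrm{Sym}^4 f}(p)|^2/p \ll \log\log x$.

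The main obstacle is the control of the prime-power tail in the logarithmic derivative of $L(s,\mathrm{Sym}^4 f\times\widetilde{\mathrm{Sym}^4 f})$, which essentially requires a Hypothesis H-type bound for $\mathrm{Sym}^4 f$ on $GL(5)$. Rudnick--Sarnak handle $d\leqs 3$ and Kim extends to $d=4$, so $d=5$ is a priori not covered. This is overcome by exploiting the explicit description of the Satake parameters of $\mathrm{Sym}^4 f$ as integral powers $\{\alpha_f^{\pm 4},\alpha_f^{\pm 2},1\}$ of those of $f$, which reduces prime-power bounds for $\mathrm{Sym}^4 f$ to prime-power bounds for $f$; the latter are then controlled by Hypothesis H on $GL(2)$ (unconditional) together with the Kim--Sarnak exponent $\theta = 7/64$.
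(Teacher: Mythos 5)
Your overall strategy is sound and overlaps with the paper's for $d\leqs 2$ (both proofs reduce to symmetric-power data), and your $d=2$ route is actually a bit more economical: by passing to $L(s,\mathrm{Sym}^2\pi\times\widetilde{\mathrm{Sym}^2\pi})$ you only need Gelbart--Jacquet and Rudnick--Sarnak's Hypothesis~H on $GL(3)$, whereas the paper's identity $a_\pi(p)^4 = 2 + 3a_{\mathrm{Sym}^2\pi}(p)+a_{\mathrm{Sym}^4\pi}(p)$ (the paper has a typo, missing the factor $3$) also invokes $\mathrm{Sym}^4$. For $d=3$ self-dual, your route is genuinely different: you invoke the classification of self-dual $GL(3)$ cusp forms as Gelbart--Jacquet lifts (this is a nontrivial theorem of Ramakrishnan and should be cited) together with Clebsch--Gordan and Kim's $\mathrm{Sym}^4$, whereas the paper simply cites Jiang--L\"u for the fact that $L(s,\pi\times\pi\times\pi\times\pi)$ has a pole of order $3$ at $s=1$ and appeals to a Tauberian theorem.

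However, there is a real gap in your handling of the prime-power tail for $L(s,\mathrm{Sym}^4 f\times\widetilde{\mathrm{Sym}^4 f})$. The reduction ``prime-power bounds for $\mathrm{Sym}^4 f$ reduce to prime-power bounds for $f$, controlled by Hypothesis~H on $GL(2)$ and $\theta=7/64$'' does not go through: with $|\alpha_f(p)|\leqs p^{7/64}$ one only gets $|a_{\mathrm{Sym}^4 f}(p^\ell)|\ll p^{7\ell/16}$, so $\sum_p |a_{\mathrm{Sym}^4 f}(p^\ell)|^2(\log p)^2/p^\ell$ fails to converge for $\ell=2$ (one would need $\theta<1/16$), and Hypothesis~H for $f$ bounds $|a_f(p^m)|^2/p^m$ rather than $|a_f(p^{4\ell})|^2/p^\ell$. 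The correct way to close this (and what ``Tauberian theorems'' really means here, both for you and for the paper) is to use non-negativity: the Dirichlet coefficients $\Lambda_{\Pi\times\widetilde\Pi}(p^\ell)=\log p\,|a_\Pi(p^\ell)|^2$ are all $\geqs 0$, so once Wiener--Ikehara gives $\sum_{n\leqs x}\Lambda_{\Pi\times\widetilde\Pi}(n)/n\sim\log x$ one may simply \emph{discard} the $\ell\geqs 2$ terms and conclude $\sum_{p\leqs x}|a_\Pi(p)|^2\log p/p\leqs(1+o(1))\log x$, after which partial summation gives $\sum_{p\leqs x}|a_\Pi(p)|^2/p\ll\log\log x$ with no Hypothesis~H needed on $GL(5)$. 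With that repair, your proof is correct.
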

\begin{proof}
For $d=1$ the result is clear. For $d=2$ this follows from the fact that 
\[
a_\pi(p)^4=2+a_{\mathrm{Sym}^2 \pi}(p)+a_{\mathrm{Sym}^4 \pi}(p),
\]
(see the proof of Corollary 2.15 of \cite{BFKMMS} for example) along with the bounds $$\sum_{p\leqs x}a_{\mathrm{Sym}^k \pi}(p)/p\ll \log \log x, \ k=2, 4.$$
When $d=3$ and $\pi$ is self-dual it is known (see Section 3.2 of \cite{JL}) that $L(s,\pi\times\pi\times\pi\times\pi)$ has a pole of order 3 at $s=1$. The result in this case therefore follows by Tauberian theorems. 
\end{proof}


\section{Simultaneous large values in $t$-aspect:\\ set-up and proofs of Theorems \ref{main thm} and \ref{main thm 2}}

In this section we give the set-up for proving simultaneous large values in the $t$-aspect, state the required moment bounds and then complete the proofs of Theorems \ref{main thm} and \ref{main thm 2}. 
Let $\pi_i$, $1\leqs i\leqs m$ be irreducible unitary cuspidal automorphic representations of $\mathrm{GL}(d_i)$ over $\mb{Q}$ such that $\pi_i\not \cong \pi_j$ for $1\leq i\not=j\leq m$, respectively, and let
\[
L_i(s)=L(s,\pi_i)=\sum_{n=1}^\infty \frac{A_{\pi_i}(n)}{n^s}
\] 
be the associated $L$-functions. For brevity we denote $a_i(p)=a_{\pi_i}(p)=A_{\pi_i}(p)$.

To pick out simultaneous values we recall that if there exists a $t$ such that 
\begin{equation}
\prod_{i=1}^m |L_i(1/2+it)|^2-V\sum_{1\leqs i\leqs m}\prod_{\substack{j=1\\j\neq i}}^m|L_j(1/2+it)|^2 >0,
\end{equation}
 then we must have $|L_i(1/2+it)|^2>V$ for all $1\leqs i\leqs m$. Write
 \[
 L(s)
 =
 \prod_{i=1}^m L_i(s)
 =
 \sum_{n\geqs 1}a(n)n^{-s}
 \]
 so that 
 \[
 a(p)=\sum_{i=1}^m A_{\pi_i}(p)=\sum_{i=1}^ma_{i}(p).
 \]
 
We choose our resonator to pick out large values of $L(s)$. Let $X=T^\Delta$ for some $\Delta<1$ to be chosen and denote
\[
\mc{L}=\sqrt{\frac{1}{m}\log X\log\log X}.
\]
For small $\epsilon>0$ let
\[
\mc{P}=\bigg\{\mc{L}^2<p\leqs \exp((\log \mc{L})^2): |a_i(p)|\leqs (\log p)^{1-\epsilon} \text{ for all } 1\leqs i\leqs m\bigg\}.
\]
We then define $r(n)$ to be the multiplicative function supported on squarefree numbers for which  
\begin{equation*}\label{r}
r(p)
=
\begin{cases}
a(p)\frac{\mc{L}}{\sqrt{p}\log p}{}, \,\,\, &\text{ for } p\in\mc{P,} 
\\
0,  & \text{ otherwise. }
\end{cases}
\end{equation*}
and let
\begin{align}\label{Rtdef}
R(t)=\sum_{n\leqs X}r(n)n^{-it}.
\end{align}
By construction of $\mc{P}$ we note the important bounds
\begin{align}\label{smallr}
r(p)a_i(p)/p^{1/2}, |r(p)|^2=o(1)
\end{align}
for any $1\leqs i\leqs m$ and $p\in\mc{P}$. With these bounds the computations for the Euler products acquired from the resonance method can proceed in the usual simple way. This is the reason for the restriction  $|a_i(p)|\leqs (\log p)^{1-\epsilon}$ in $\mc{P}$; without it such computations are much more involved e.g. see \cite{BFKMMS}. 

With this set-up and the above notation we have the following propositions.
 
 \begin{prop}\label{gen L lower prop} For large $T$ and $X=T^\Delta$ with $\Delta<1$, we have
 \[
\frac{1}{T} \int_{T}^{2T}|L(\tfrac12+it)|^2|R(t)|^2dt 
\gg 
\prod_p\bigg(1+|r(p)|^2+2(1+o(1))\frac{{r(p)}a(p)}{\sqrt{p}}\bigg).
 \]
 \end{prop}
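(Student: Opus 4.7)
The plan is to invoke the Cauchy–Schwarz inequality to convert the absolute second moment of $L=\prod_i L_i$, which is out of reach for products of arbitrary total degree, into a first moment of $L$ against $|R|^2$, which is computable. Applying $|\int f\overline{g}|^2 \leqs \int|f|^2\int|g|^2$ with $f = L(\tfrac12+it)R(t)$ and $g=R(t)$ gives
\begin{equation*}
\int_T^{2T}|L(\tfrac12+it)|^2|R(t)|^2\,dt \;\geqs\; \frac{\bigl|\int_T^{2T}L(\tfrac12+it)|R(t)|^2\,dt\bigr|^2}{\int_T^{2T}|R(t)|^2\,dt},
\end{equation*}
so it suffices to produce a sharp lower bound for the first moment above and a matching upper bound for the self-integral of $R$ below.

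The resonator integral is standard. Expanding $|R(t)|^2 = \sum_{m,n\leqs X}r(m)\overline{r(n)}(n/m)^{it}$ and integrating, the diagonal $m=n$ yields $T\sum_n|r(n)|^2 = T\prod_p(1+|r(p)|^2)$ by the multiplicativity of $r$ and its support on squarefrees; the off-diagonal is $O(X\log X) = o(T)$ since $\Delta<1$. For the first moment, the same expansion reduces matters to evaluating $\int_T^{2T}L(\tfrac12+it)(n/m)^{it}\,dt$ for each pair $m,n\leqs X$. Introducing a smooth cutoff on $[T,2T]$, writing the integral as a Mellin transform, and shifting the contour to $\Re(s)=1+\varepsilon$ (where $L(s) = \sum a(k)k^{-s}$ is absolutely convergent), one finds a main term precisely when $n = mk$ for some integer $k\geqs 1$, of size $T\cdot a(k)/k^{1/2}$; the shifted horizontal integral is negligible by the convexity bound \eqref{phrag} on $L$ combined with the rapid decay of the cutoff's Mellin transform. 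Summing the diagonal contributions and using the multiplicativity of $r$ on coprime squarefrees yields the Euler product
\begin{equation*}
\int_T^{2T}L(\tfrac12+it)|R(t)|^2\,dt \;=\; (1+o(1))\,T\prod_p\Bigl(1+|r(p)|^2+\frac{r(p)a(p)}{\sqrt{p}}\Bigr).
\end{equation*}

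Substituting these asymptotics into the Cauchy–Schwarz bound and simplifying prime-by-prime,
\begin{equation*}
\frac{\bigl(1+|r(p)|^2+r(p)a(p)/\sqrt{p}\bigr)^2}{1+|r(p)|^2} = 1+|r(p)|^2 + \frac{2\,r(p)a(p)}{\sqrt{p}} + \frac{(r(p)a(p))^2/p}{1+|r(p)|^2},
\end{equation*}
and the last remainder is $o(1)$ times the cross term on $\mathcal{P}$ by the smallness estimates \eqref{smallr}, giving the claimed bound. The main obstacle I anticipate is the clean evaluation of the inner integral $\int_T^{2T}L(\tfrac12+it)(n/m)^{it}\,dt$: since the effective Dirichlet-polynomial length of $L$ is $T^{(d_1+\cdots+d_m)/2}$, which can substantially exceed $X=T^\Delta$, a naive approximate functional equation would leave intractable off-diagonal terms. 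The contour-shift argument circumvents this by working solely in the region of absolute convergence, but requires careful bookkeeping of horizontal integrals and boundary contributions at $t=T$ and $t=2T$, which is standard but technical.
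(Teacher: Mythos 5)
Your overall route matches the paper's: Cauchy--Schwarz reducing to the first moment $\int L|R|^2$ over $\int |R|^2$, the resonator self-integral via the mean value theorem, and a smoothed Mellin expansion of $L(\tfrac12+it)$ into a Dirichlet polynomial so that only the diagonal $n=mk$ survives.

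The gap is in the step ``summing the diagonal contributions and using the multiplicativity of $r$ \ldots\ yields the Euler product.'' The diagonal actually produces a \emph{truncated} sum $\sum_{lm\leqs X}a(l)|r(m)|^2\overline{r(l)}\,l^{-1/2}e^{-l/Y}$, after writing each squarefree $n$ as $lm$ with $(l,m)=1$; the length constraint $lm\leqs X$ blocks any direct factorization into an Euler product, and it is not automatic that the truncated sum is within a constant of the full product (the resonator coefficients $r(n)$ are supported on squarefree integers with prime factors in $\mathcal{P}$, many of which exceed $X$). The paper bridges this in two steps that you omit: it first observes that the choice $r(p)=a(p)\mathcal{L}/(p^{1/2}\log p)$ makes every summand nonnegative, so the weight $e^{-l/Y}\geqs 1/2$ can be discarded; and then — the essential missing idea — it applies Rankin's trick (equations \eqref{rankin bound}--\eqref{Rankin error}) to show that the complementary sum over $lm>X$ is an exponentially small fraction of the full Euler product. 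That estimate leans heavily on the resonator's precise design (support on primes in $(\mathcal{L}^2,\exp((\log\mathcal{L})^2)]$, the choice $\alpha=1/(\log\mathcal{L})^3$, and the prime-sum bound of Proposition~\ref{res sum bound cor}). Without it, the claimed $(1+o(1))$ Euler-product evaluation of the first moment is unjustified and the final lower bound does not follow. The remaining pieces of your argument — the resonator self-integral and the prime-by-prime simplification after Cauchy--Schwarz using \eqref{smallr} — are sound.
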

 
  \begin{prop}\label{spec upper bound prop} Let $L_i(s)$ be a primitive Dirichlet $L$-function or the $L$-function of a (holomorphic or Maa\ss) cuspidal newform. Then for large $T$ and $X=T^\Delta$ with $\Delta$ sufficiently small, 
 \[
\frac{1}{T} \int_{T}^{2T}|L_i(\tfrac12+it)|^2|R(t)|^2dt 
\ll  
\prod_p\bigg(1+|r(p)|^2+2(1+o(1))\frac{\Re(r(p)\ol{a_i(p)})}{\sqrt{p}}\bigg).
 \]
 If $L_i$ is a Dirichlet $L$-function, one can take $\Delta<\frac{17}{33}$ and if $L_i$ is a $GL(2)$ $L$-function, we can take $\Delta<\frac{1/2-\theta}{3+\theta}$ where $\theta$ is the bounds towards the Ramanujan conjecture for $GL(2)$ Maa\ss\  Forms. 
 \end{prop}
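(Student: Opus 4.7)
The natural approach is to expand the square $|R(t)|^2 = \sum_{m,n \leqs X} r(m)\overline{r(n)} (n/m)^{it}$ and apply, term by term, a sharp twisted second moment asymptotic of the form
\begin{equation}
\int_T^{2T} |L_i(\tfrac12 + it)|^2 (n/m)^{it} \, dt = T\cdot \mathcal{M}_i(m,n;T) + \mathrm{error}(m,n),
\end{equation}
valid uniformly for $m, n \leqs X = T^\Delta$. For a primitive Dirichlet $L$-function such a formula is supplied by the Bettin--Chandee--Radziwi\l\l\ mean square of $L_i$ against a Dirichlet polynomial, which holds in the range $\Delta < 17/33$. For a $GL(2)$ $L$-function one has an analogous asymptotic valid in the range $\Delta < (1/2 - \theta)/(3 + \theta)$, where the $\theta$-loss reflects the need to control the Hecke coefficients $A_{\pi_i}(n)$ via the best available bound toward the Ramanujan conjecture. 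These two thresholds are precisely the admissible values of $\Delta$ appearing in the statement.

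Granted such an asymptotic, the main contribution is $T \sum_{m,n \leqs X} r(m)\overline{r(n)}\mathcal{M}_i(m,n;T)$. A Rankin--Selberg / contour shift analysis expresses $\mathcal{M}_i(m,n;T)$ to leading order as $(\log T)\cdot F_i(m,n)$ for a multiplicative function $F_i$ with $F_i(p,1) = a_i(p)/\sqrt{p}$, $F_i(1,p) = \overline{a_i(p)}/\sqrt{p}$, and $F_i(p,p) = 1 + O(1/p)$. Because $r(n)$ is multiplicative and supported on squarefree integers, the double sum factors as an Euler product; collecting the four contributions at each prime $p$ from $(m_p, n_p) \in \{1,p\}^2$ produces the local factor
\[
1 + |r(p)|^2 + 2(1+o(1))\frac{\Re(r(p)\overline{a_i(p)})}{\sqrt{p}},
\]
matching the statement after normalising by the factor of $T\log T$ already present in $\int_T^{2T}|L_i|^2 dt$ (this $\log T$ cancels against the one pulled out from $\mathcal{M}_i$). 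The smallness conditions \eqref{smallr} guarantee that each local factor is close to $1$ and that the infinite product is harmless.

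The main obstacle is controlling the accumulated error $\sum_{m,n\leqs X}|r(m)r(n)|\cdot |\mathrm{error}(m,n)|$ relative to the main term. Using Cauchy--Schwarz and the Euler product estimate for $\sum_{n \leqs X}|r(n)|^2$, this reduces to verifying that the pointwise error in the twisted second moment saves a power of $T$ over $X^2 = T^{2\Delta}$, which is exactly what the quoted asymptotics provide in the ranges $\Delta < 17/33$ and $\Delta < (1/2 - \theta)/(3+\theta)$ respectively. Pushing beyond these thresholds would require new twisted second moment technology (a longer-range Bettin--Chandee--Radziwi\l\l\ type result, or a sharper bound toward Ramanujan in the $GL(2)$ case), and no such input appears to be available. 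Beyond the resonator construction and the moment input, no further ingredient enters: the product shape of the bound is forced by the multiplicativity and squarefree support of $r$.
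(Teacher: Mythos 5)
Your outline takes essentially the same route as the paper: expand $|R(t)|^2$, feed in a twisted second moment asymptotic for $L_i$ against a Dirichlet polynomial, extract the Euler product main term by multiplicativity of $r$, and check the error saves a power of $T$ against $X^2$. Two corrections on the details. For the input asymptotics the paper uses Wu's extension of the Bettin--Chandee--Radziwi\l\l\ result to primitive Dirichlet $L$-functions (with the threshold $\Delta<17/33$) and, in the $GL(2)$ case, the Andersen--Thorner twisted second moment; in both cases a further verification is needed that the quoted theorems tolerate the resonator coefficients $r(n)$, which do not satisfy the pointwise bound $O(n^\epsilon)$ literally assumed there but do satisfy the requisite $L^1$- and $L^2$-on-dyadic-blocks estimates, so the proofs go through. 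More substantively, your claim that ``this $\log T$ cancels'' is not how the bookkeeping works: the main term is of size $(\log T)\cdot N_X(\underline{0})$ with $N_X(\underline{0})$ the Euler product, and the residual $\log T$ is not cancelled but absorbed into the $o(1)$ inside the product, which is legitimate only because the product is already exponentially large, of size $\exp(c\sqrt{\log T/\log\log T})$. Neither point invalidates your overall plan.
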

 
 \begin{prop}\label{gen upper bound prop} Let $\pi_i$ be irreducible unitary cuspidal automorphic representations of $GL(d_i)$ over $\mathbb{Q}$ such that $\pi_i\not \cong \pi_j$ for $1\leq i\not=j\leq m$. Assume GRH for each $L_j(s)=L(s, \pi_j), j=1, \dots, m$ and if $d_j\geqs 4$  assume the generalised Ramanujan conjecture for $L_j(s)$. Then for large $T$ and $X=T^\Delta$ with $\Delta<1/2$, we have  
 \begin{align*}
&\frac{1}{T}\int_{T}^{2T}\prod_{\substack{1\leqs j\leqs m\\j\neq i}}|L_j(\tfrac12+it)|^{2}|R(t)|^2dt
\\
\ll
&\exp\bigg(\sqrt{\frac{\log T}{\log_2 T\log_3 T}}\bigg)\prod_p\bigg(1+|r(p)|^2+2(1+o(1))\frac{\Re ( r(p)\ol{b_i(p)})}{\sqrt{p}}\bigg)
 \end{align*}
where 
 \[
 b_i(p)=\sum_{j\neq i}a_j(p)=a(p)-a_i(p).
 \]
 \end{prop}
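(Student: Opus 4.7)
The strategy is to adapt Harper's conditional method \cite{Ha} for sharp upper bounds on moments of $L$-functions to the extreme-value regime, as anticipated in the introduction. Under GRH for each $L_j$ with $j\neq i$, Soundararajan's pointwise bound, combined multiplicatively, gives for any $x\geqs 2$
\be
\log\prod_{j\neq i}|L_j(\tfrac12+it)|^2\leqs 2\Re\sum_{p\leqs x}\frac{b_i(p)}{p^{1/2+\lambda/\log x+it}} + O\Big(\sum_{p\leqs x^{1/2}}\frac{|b_i(p^2)|}{p}\Big)+O\Big(\frac{\log T}{\log x}\Big),
\ee
for a suitable $\lambda>0$. The assumed Ramanujan bound (for $d_j\geqs 4$) or \eqref{a pp bound} (for $d_j\leqs 3$) controls the prime-square term, and taking $\log x$ of order $\sqrt{\log T\,\log_2 T\,\log_3 T}$ makes the $\log T/\log x$ error match the target prefactor $\exp(\sqrt{\log T/\log_2 T\log_3 T})$.

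One then partitions $[2,x]$ into successive intervals $I_k=(x_{k-1},x_k]$, with $x_0=\mc{L}^2$ matching the lower endpoint of $\mc{P}$, and sets $\mc{D}_k(t)=\sum_{p\in I_k}b_i(p)p^{-1/2-it}$. Harper's key step is to replace $\exp(2\Re\mc{D}_k(t))$ by its truncated Taylor polynomial of degree $K_k$ on the good set $G_k=\{t\in[T,2T]:|\mc{D}_k(t)|\leqs V_k\}$, handling the bad complement separately. With $K_k, V_k$ chosen so that $\prod_k \sum_{j\leqs K_k}(2\mc{D}_k)^j/j!$ has total Dirichlet-polynomial length $\leqs T^{\eta}$ for small $\eta>0$, its product with $|R(t)|^2$ (of length $X^2=T^{2\Delta}$) still has length $<T$ provided $\Delta<1/2$, so the mean value theorem for Dirichlet polynomials gives diagonal asymptotics.

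The diagonal contribution factors as an Euler product. For $p\in\mc{P}$ the interaction of $r(p)$ with the truncated exponential yields a local factor $1+|r(p)|^2+2(1+o(1))\Re(r(p)\ol{b_i(p)})/\sqrt{p}$, after using \eqref{smallr} to discard higher-order cross terms. For $p\notin\mc{P}$ only the exponential contributes, giving $\exp(|b_i(p)|^2/p+O(1/p))$; by Proposition \ref{res sum bound cor} the product over these primes is $\exp(O(\log_3 T))$, absorbable into the prefactor. The contribution from $t\notin\bigcap_k G_k$ is handled by bounding $\int_{|\mc{D}_k|>V_k}|R(t)|^2\prod_{j\neq i}|L_j(\tfrac12+it)|^2\,dt$ via Cauchy--Schwarz together with high-moment estimates on $\mc{D}_k$ (using Proposition \ref{res sum bound cor} for the variance), which for $V_k$ slightly above the natural $L^2$-scale makes this negligible compared to the main term.

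The main technical obstacle is the simultaneous calibration of $x,\{x_k\},\{K_k\},\{V_k\}$ so that (i) the Soundararajan, Taylor, and bad-set errors aggregate to no more than the claimed $\exp(\sqrt{\log T/\log_2 T\log_3 T})$ prefactor, (ii) the Dirichlet polynomials stay short enough for diagonal mean values (this is what forces the hypothesis $\Delta<1/2$), and (iii) the cross-terms $a_j(p)\ol{a_{j'}(p)}$ with $j\neq j'$ inside the Euler product average out via Selberg orthogonality rather than polluting the main term. The key departure from Harper's original setting, suggested in the introduction, is to focus on very small primes: the resonator is supported in the relatively short range $\mc{P}$, and the restriction $|a_i(p)|\leqs (\log p)^{1-\epsilon}$ in the definition of $\mc{P}$ is exactly what ensures, via \eqref{smallr}, that Taylor expansions of $\mc{D}_k$ over primes outside $\mc{P}$ do not distort the shape of the claimed Euler product.
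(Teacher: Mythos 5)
Your outline correctly identifies the two main ingredients (Chandee/Soundararajan's conditional pointwise inequality plus a Harper-style iterated splitting with truncated exponentials), and the observation that the resonator must be entirely contained in the first block so as not to distort the Euler product is exactly right. But the handling of the bad sets as you describe it would not close.

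The issue is that you propose to treat all of $t\notin\bigcap_k G_k$ by discarding, i.e.\ showing $\int_{|\mc{D}_k|>V_k}|R|^2\prod_{j\neq i}|L_j|^2\,dt$ is negligible via Cauchy--Schwarz (or H\"older) and high-moment bounds on $\mc{D}_k$. That works for the \emph{first} block only. For the initial sum over $p\leqs \exp((\log\mc{L})^2)$ one can take $k$ of order $\log T/(\log_2 T)^2$, giving a bad-set measure of size $T\exp\big(-c\log T/\log_2 T\big)$, which just barely beats $\int |R|^4\ll T\exp\big((2-\epsilon)\log T/\log_2 T\big)$ when combined with the (GRH-conditional) eighth moment of $\prod_{j\neq i}L_j$. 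For the later blocks near the top of the range $Z_J\approx\exp(C\sqrt{\log T\log_2 T\log_3 T})$, the polynomials $P_{\mf{i},\mf{j}}$ are so long that, to keep the truncated Taylor series of total length $\ll T^{1/2}$, the truncation degrees $\ell_{\mf{i}}$ (and hence the admissible threshold $V_k\asymp\ell_{\mf{i}}$) must decay like $e^{-5\mf{i}/4}$. Consequently the bad-set measure at stage $\mf{i}$ is only of size $T\exp\big(-c\log T/(e^{\mf{i}}(\log_2 T)^{1+\epsilon})\big)$, which for $\mf{i}$ near $J$ is roughly $T\exp(-(\log T)^{1/2+o(1)})$ --- far too large to beat the fourth moment of $R$. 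So discarding fails.

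What is actually needed is Harper's multiplication trick: if $t$ is bad at stage $\mf{j}+1$, one does \emph{not} discard; one instead invokes the Soundararajan--Chandee inequality with the \emph{shorter} truncation $Z=Z_{\mf{j}}$ (incurring the price $\exp(C_M\log T/\log Z_{\mf{j}})$), and then multiplies by the factor $(|P_{\mf{j}+1,l}(t)|/\ell_{\mf{j}+1})^{2r_{\mf{j}}}>1$ to encode membership of the bad set. Averaging this factor over $t$ recovers a small power $\exp(-c'\log T/(e^{\mf{j}+1}(\log_2 T)^{1+\epsilon}))$, and the parameters $\ell_{\mf{i}},r_{\mf{i}}$ are chosen precisely so that this cancels the price $\exp(C_M\log T/\log Z_{\mf{j}})$ (compare \eqref{excep set P_j} and \eqref{extra term bound}). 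This iterated-inequality structure is the heart of the proof (it is encoded in the decomposition $[T,2T]=\cup_\mf{j}S(\mf{j})$ and Lemma \ref{zeta lem}), and it is not captured by the discard-all-bad-sets strategy you describe. As a separate, smaller point: the first block extends from $1$ up to $Z_0=\exp((\log\mc{L})^2)$ (the \emph{upper} endpoint of the support of $r$), not starting at $\mc{L}^2$; the crucial feature is that the resonator support is entirely contained in the first block.
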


\begin{rem}The assumption of the generalised Ramanujan conjecture for $d_j\geqs 4$ arises from Lemma \ref{prime sum cor} below. The remainder of the proof of Proposition \ref{gen upper bound prop} only requires the pointwise bounds on the coefficients given in \eqref{a bound} and \eqref{a pp bound}. The generalised Ramanujan conjecture could be replaced with Hypothesis H of Rudnick--Sarnak \cite{RS}, inequality \eqref{H hyp}, but at the cost of a  more technical proof. 
\end{rem}

 With these propositions in hand we can complete the proofs of  Theorems \ref{main thm} and \ref{main thm 2}.

 \begin{proof}[Proof of Theorems \ref{main thm} and \ref{main thm 2}] 
 
 Define $V$ as  
 \[
V=\frac{\int_T^{2T}|L(\tfrac12+it)|^2|R(t)|^2 dt}{2\sum_{i=1}^m\int_T^{2T} \prod_{j\neq i}|L_j(\tfrac12+it)|^2|R(t)|^2dt}.
 \]
 Then there exists $t\in[T,2T]$ satisfying \eqref{gen ineq} so that 
 \begin{align}
 |L_j(\tfrac{1}{2}+it)|^2>V \text{ for all $1\leqs j\leqs m$.}
 \end{align} It remains to give a lower bound for $V$. From Propositions \ref{gen L lower prop}-\ref{gen upper bound prop} and \eqref{smallr}, we have that 
 \[
 V\gg \exp\Big(-c\sqrt{\frac{\log T}{\log_2T\log_3T}}\Big)\exp\bigg(2(1+o(1))\min_{1\leqs i\leqs m}\sum_{p\in\mc{P}}\frac{\mc{L}(|a_i(p)|^2+\Re \ol{a_i(p)}\sum_{j\neq i}a_j(p))}{p\log p}\bigg).
 \]
 
 Now let us remove the restriction on the size of the $|a_i(p)|$. For $1\leq i\not=j\leqs m$,
 \[
\mc{L} \sum_{\substack{\mc{L}^2<p\leqs \exp((\log \mc{L})^2)\\|a_k(p)|>(\log p)^{1-\epsilon} \text{for some } k}}\frac{a_i(p)\ol{a_j(p)}}{p\log p}
\ll
\frac{\mc{L}}{(\log\mc{L})^{2-\epsilon}}
\sum_{\substack{\mc{L}^2<p\leqs \exp((\log \mc{L})^2)}}\frac{|a_i(p){a_j(p)}a_k(p)|}{p}
 \]
 which by H\"older's inequality and \eqref{fourth moment} or the generalised Ramanujan conjecture is 
 \[
 \ll \frac{\mc{L}}{(\log\mc{L})^{2-\epsilon}}\log\log \mc{L}=o\Big(\frac{\mathcal L}{\log \mathcal L}\Big).
 \]
Thus we can extend the sum over $p\in \mc{P}$ to all $\mc{L}^2<p\leqs \exp((\log\mc{L})^2)$ with an acceptable error. Applying Proposition \ref{res sum bound cor} gives 
   \[
 V\gg\exp\bigg(2(1+o(1))\sqrt{\frac{\frac{1}{m}\log X}{\log\log X}}\bigg).
 \] 
 \end{proof}


 \section{Lower bounds in $t$-aspect: Proof of Proposition \ref{gen L lower prop}}
Let
\[
\mc{I}:=\frac{1}{T}\int_{T}^{2T}|L(\tfrac12+it)|^2|R(t)|^2dt.
\]
We aim to prove the lower bound 
\[
\mc{I}
\gg 
\prod_p\bigg(1+|r(p)|^2+2(1+o(1))\frac{\ol{r(p)}a(p)}{\sqrt{p}}\bigg).
\]
Let $\Phi$ be a smooth function supported on $[1,2]$ satisfying $0\leqs \Phi(x)\leqs 1$ so that
\[\mc{I}
\geqs 
\frac{1}{T}\int_\mb{R}|L(\tfrac12+it)|^2|R(t)|^2\Phi(t/T)dt.
\]
Thus, by Cauchy--Schwarz
\begin{equation}\label{initial ineq}
\mc{I}
\geqs 
\bigg|\frac{1}{T}\int_\mb{R}L(\tfrac12+it)|R(t)|^2\Phi(t/T)dt\bigg|^2/\frac{1}{T}\int_\mb{R}|R(t)|^2\Phi(t/T)dt. 
\end{equation}
As usual, since $X=T^\Delta$ with $\Delta<1$, we find  
\begin{equation}\label{res bound}
\frac{1}{T}\int_\mb{R}|R(t)|^2\Phi(t/T)dt 
\sim 
\hat{\Phi}(0)\sum_{n\leqs X} |r(n)|^2 
\leqs 
\hat{\Phi}(0)\prod_p (1+|r(p)|^2) 
\end{equation}
and so it remains to compute 
\[
\frac{1}{T}\int_{\mb{R}}L(\tfrac12+it)|R(t)|^2\Phi(t/T)dt.
\]
This has essentially been done by Aistleitner--Pa\'nkowski \cite{AP}, and so we only present the main details. The only difference is that they worked with the Selberg class where the generalised Ramanujan conjecture is assumed, although this has little effect on the arguments.  

Applying the Mellin inversion formula 
\[
e^{-x}=\frac{1}{2\pi i }\int_{(c)}\Gamma(s) x^{-s}ds,\qquad c>0
\]
and shifting contours to the left along with the convexity bounds \eqref{phrag}, we find 
\[
L(\tfrac12+it)=\sum_{n=1}^\infty \frac{a(n)}{n^{1/2+it}}e^{-n/Y}+O(1)
\]
where $Y=T^{d_L+\epsilon}$ and 
$
d_L=\sum_{i=1}^m d_i
$
 is the degree of $L(s)$. For $n>3Y\log Y$ we have $e^{-n/Y}\leqs n^{-2}$ and hence  
\[
\sum_{n\geqs 3Y\log Y}\frac{|a(n)|e^{-n/Y}}{n^{1/2}}\ll \sum_{n\geqs 1} \frac{\tau_{d_L}(n)}{n^{2+1/(d^2+1)}} \ll 1
\]
where we have used the coefficient bound for $a(n)$ in \eqref{coeff bound}. Thus we find 
\[
L(\tfrac12+it)=\sum_{n\leqs T^{d_L+2\epsilon}} \frac{a(n)}{n^{1/2+it}}e^{-n/Y}+O(1).
\]
From the rapid decay of $\hat{\Phi}$ and \eqref{res bound} we now have 
\begin{multline}\label{lower bound integral}
\frac{1}{T}\int_{\mb{R}}L(\tfrac12+it)|R(t)|^2\Phi(t/T)dt
=
\hat{\Phi}(0)\sum_{lm=n\leqs X}\frac{a(l)r(m)\overline{r(n)}}{\sqrt{l}}e^{-l/Y}
+
O\big(\prod_p(1+|r(p)|^2)\big).
\end{multline}
Since $r(n)$ is supported on squarefree integers and $r(p)=a(p)\mc{L}/p^{1/2}\log p$ where it is non-zero, the summand of the main term is positive and hence we can bound it from below by
\begin{equation}\label{lower bound sum}
\frac12\hat{\Phi}(0)\sum_{lm\leqs X}\frac{a(l)r(m)\overline{r(lm)}}{\sqrt{l}}
\end{equation}
since $e^{-l/Y}\geqs 1/2$ for $l\leqs X$. Next we extend the sum $lm\leq X$ to all $l,m$.
By Rankin's trick 
\begin{equation}
\begin{split}\label{rankin bound}
&
\frac{\sum_{lm>X}{a(l)r(m)\overline{r(lm)}}/{\sqrt{l}}}{\sum_{l,m}{a(l)r(m)\overline{r(lm)}}/{\sqrt{l}}}
\ll
X^{-\alpha}\prod_p \frac{1+|r(p)|^2p^{\alpha}+|r(p)a(p)|p^{-1/2+\alpha}}{|1+|r(p)|^2+a(p)\ol{r(p)}p^{-1/2}|}
\\
\ll &
\exp\bigg(-\alpha\log X+\sum_{L^2<p\leqs \exp((\log L)^2)}|a(p)|^2\Big(\frac{\mc{L}^2}{p\log^2 p}+\frac{\mc{L}}{p\log p}\Big)(p^{\alpha}-1)\bigg)
\end{split}
\end{equation}
for any $\alpha>0$. Applying this along with the bound \eqref{res sum bound} and choosing $\alpha=1/(\log\mc{L})^3$, we find that \eqref{rankin bound} can be bounded by
\begin{equation}\label{Rankin error}
\ll \exp\Big(-\alpha \frac{\log X\log_3X}{\log_2X}+O(\alpha \frac{\mc L}{(\log \mc L)^2}+\alpha^2 \mc L^2 \log\log \mc L)\Big)\ll \exp \Big(- \frac{\log X}{(\log_2 X)^3}\Big).
\end{equation} 
Combining \eqref{lower bound integral}, \eqref{lower bound sum}, \eqref{rankin bound} and \eqref{Rankin error}, we find that 
\[
\frac{1}{T}\int_{\mb{R}}L(\tfrac12+it)|R(t)|^2\Phi(t/T)dt
\gg
\prod_p \bigg(1+|r(p)|^2+\frac{a(p)\ol{r(p)}}{\sqrt{p}}\bigg).
\]
Applying this in \eqref{initial ineq} together with \eqref{smallr} we find that, 
\begin{align*}
\mc{I}\gg \prod_p\frac{\Big(1+|r(p)|^2+\frac{a(p)\ol{r(p)}}{p^{1/2}}\Big)^2}{1+|r(p)|^2} 
\gg
\prod_p \bigg(1+|r(p)|^2+2(1+o(1))\frac{a(p)\ol{r(p)}}{\sqrt{p}}\bigg)
\end{align*}
as desired.


\section{Unconditional upper bounds in $t$-aspect: Proof of Proposition \ref{spec upper bound prop}} 

In this section we give the proof of Proposition \ref{spec upper bound prop} depending on whether $L_i$ is a Dirichlet $L$-function or a $GL(2)$ $L$-function. For this we utilise the requisite twisted moment formulas which are known in these cases. 


\subsection{Dirichlet $L$-functions}  In the case when $L_i(s)=L(s,\chi)$ where $\chi$ is a primitive Dirichlet character modulo $q$ we have the following. 
\begin{lem}\label{twist chi}
	Let $\chi$ be a primitive Dirichlet character modulo $q$. Let $R(t)=\sum_{n\leq X}r(n)n^{-it}$ be as in \eqref{Rtdef}. Let $\alpha, \beta$ be complex numbers such that $\alpha, \beta\ll 1/T$. Then for $X=T^\Delta$ with $\Delta<\frac{17}{33}$ there exists $\epsilon_\Delta>0$ such that 
	\begin{align}
	&\int_{T}^{2T}L(\tfrac{1}{2}+\alpha+it, \chi)L(\tfrac{1}{2}+ \beta-it, \bar{\chi})|R(t)|^2dt+O(T^{1-\epsilon_\Delta})\\=
	&\sum_{(hk,q)=1}\frac{(h,k)^{1+\alpha+\beta}r(h)\ol{\chi(h)}r(k)\chi(k)}{h^{1/2+\beta}k^{1/2+\alpha}}\Big(L(1+\alpha+\beta, \chi_0)+\Big( \frac{qt(h,k)^2}{2\pi hk}\Big)^{-\alpha-\beta}L(1-\alpha-\beta,\chi_0)\Big),
	\end{align}
	where $\chi_0$ is the principal Dirichlet character modulo $q$.
\end{lem}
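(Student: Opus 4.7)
The plan is to reduce the twisted second moment to the standard (unweighted) twisted second moment formula for $L(s,\chi)L(s,\bar\chi)$. Expanding $|R(t)|^2 = \sum_{h,k\leqs X}r(h)\overline{r(k)}(k/h)^{it}$ and interchanging sum and integral yields
\[
\int_T^{2T}L(\tfrac12+\alpha+it,\chi)L(\tfrac12+\beta-it,\bar\chi)|R(t)|^2dt = \sum_{h,k\leqs X}r(h)\overline{r(k)}\,I_{h,k}(\alpha,\beta),
\]
where $I_{h,k}(\alpha,\beta)$ is the shifted second moment of $L(\cdot,\chi)L(\cdot,\bar\chi)$ twisted by the harmonic $(k/h)^{it}$. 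Since $r$ is supported on squarefrees coprime to $q$ (by construction of $\mathcal P$), the coprimality conditions in the stated formula come for free.

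The next step is to evaluate $I_{h,k}(\alpha,\beta)$. I would use the approximate functional equation for each factor to write $L(\tfrac12+\alpha+it,\chi)$ as a Dirichlet polynomial of length $(qT)^{1/2+o(1)}$ plus its dual, and similarly for $L(\tfrac12+\beta-it,\bar\chi)$. The cross terms yield four contributions after multiplying out, and the $t$-integration forces an approximate equality between the arguments (the usual diagonal/dual-diagonal split). The "same side" diagonal contributes terms $am\,h = bn\,k$; writing $(h,k) = g$, $h = g h'$, $k = g k'$ with $(h',k')=1$, the Dirichlet series arising from summation over $a,b$ factors into $L(1+\alpha+\beta,\chi_0)$ times an Euler factor whose local parts assemble precisely into $(h,k)^{1+\alpha+\beta}/(h^{1/2+\beta}k^{1/2+\alpha})$, giving the first main term. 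The "opposite side" diagonal, coming from the dual sums after invoking the functional equation of $L(s,\chi)$, gives the symmetric expression in $-\alpha,-\beta$ together with the extra factor $(qt(h,k)^2/(2\pi hk))^{-\alpha-\beta}$ from the gamma-factor asymptotics; this is the second main term.

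The main obstacle is bounding the off-diagonal contributions uniformly in $h,k\leqs X$ with a total error saving a power of $T$. After Mellin/Poisson expansion, the off-diagonal produces shifted convolution sums of the form $\sum \chi(a)\bar\chi(b) \mathcal W(a,b)$ with $am h - bn k$ small but nonzero; applying Poisson summation in the dual variable reduces these to sums of Kloosterman sums, for which the Weil bound together with stationary-phase analysis of the oscillatory kernel produces a power saving whenever $hk$ is not too large relative to $qT$. This is precisely the analysis carried out (for example in the Bettin--Chandee--Radziwi{\l}{\l}-type framework for the twisted fourth moment of $\zeta$), and the bookkeeping shows that the error is admissible so long as $X^2 = T^{2\Delta}$ stays below $T^{34/33}$, i.e.\ $\Delta < 17/33$; the flexibility of choosing $\epsilon_\Delta>0$ simply reflects that any such $\Delta$ leaves a positive margin. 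I would not reprove these off-diagonal estimates but invoke them as a black box, noting that the restrictions on $r$ in $\mathcal P$ (support on squarefrees, $(h,q)=1$) make the inputs a clean special case.
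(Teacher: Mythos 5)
Your route is essentially the paper's: the result is the (shifted, resonator-twisted) second moment of a Dirichlet $L$-function with a long polynomial, and it is obtained from the Bettin--Chandee--Radziwi\l\l\ / Wu framework; the paper simply cites Wu's Theorem 1.1 directly rather than re-sketching the approximate functional equation and diagonal/dual-diagonal bookkeeping as you do. (One small slip: this is the twisted \emph{second} moment of $\zeta$ and its Dirichlet analogue, not the twisted fourth moment, and the $17/33$ threshold is Wu's, extending Bettin--Chandee--Radziwi\l\l.)

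However, there is a genuine gap in treating the off-diagonal estimates as a ``black box'' applied to a ``clean special case''. Wu's Theorem 1.1 has a coefficient hypothesis $a(h)\ll_\epsilon h^\epsilon$ on the twisting polynomial written in the normalisation $\sum_{h\leq y} a(h)h^{-1/2-it}$. The resonator $R(t)=\sum_{n\leq X}r(n)n^{-it}$ has, in that normalisation, coefficients $a(n)=r(n)\sqrt{n}$; since $r(p)=a(p)\mathcal L/(\sqrt{p}\log p)$ is of size roughly $(\log\mathcal L)^{-\epsilon}$ on the support $\mathcal L^2<p\leq\exp((\log\mathcal L)^2)$, the product $r(p)\sqrt{p}\approx\mathcal L/(\log p)^{\epsilon}$ is of order $\mathcal L\gg p^{1/2-o(1)}$, so $a(n)\ll n^\epsilon$ fails badly. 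Thus the theorem you want to invoke does not apply to this resonator, and the off-diagonal error bounds cannot simply be borrowed. This is exactly the point the paper's proof addresses: it identifies the steps in Wu's argument where the pointwise bound $a(h)\ll h^\epsilon$ is used (Wu's eqs.\ (5.28)--(5.29) and (5.37)--(5.38)) and shows they survive once replaced by the averaged bounds $\sum_{u\sim U}|r(u)/\sqrt{u}|\ll U\,T^\epsilon$ and $\sum_{u\sim U}|r(u)|^2/u\ll T^\epsilon$, which \emph{do} hold for these resonator coefficients. Without this verification your argument is incomplete; you would need either to reproduce the off-diagonal analysis under the weaker averaged hypotheses, or to make explicit, as the paper does, that Wu's error-term estimates only ever use these $\ell^1/\ell^2$ averages and then check them for $r$.
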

\begin{proof}
The proof is similar to that \cite[Theorem 1.1]{Wu}, but certain modifications are needed. First note that the condition $a(h)\ll h^\epsilon$ in the assumption \cite[Theorem 1.1]{Wu} does not hold in our case, however, we can modify the proof so that the conclusion still holds. More specifically, we still have \cite[eq (5.28)]{Wu} since $\sum_{u\sim U}\frac{\sqrt{u}r(u)}{u}\ll U\prod_{p}(1+r(p)\sqrt{p})\ll U \exp\Big(O\Big(\frac{\mathcal L}{\log\mathcal L^2}\Big)\Big)\ll U T^\epsilon$ and this causes an extra factor of $T^\epsilon$ in \cite[eq (5.29)]{Wu} which is acceptable. In the estimate \cite[eq (5.37)]{Wu}, we use $\sum_{u\sim U}|\frac{\sqrt{u}r(u)}{u}|^2\ll \prod_{p}\Big(1+\frac{r(p)^2}{p}\Big)\ll \exp\Big(O\Big(\frac{\mathcal L^2}{(\log \mathcal L^2)^2}\Big)\Big)\ll T^\epsilon$, which is again acceptable and leads to \cite[eq (5.38)]{Wu}.  The main term can be derived the same way in the proof of \cite[Theorem 1.1]{Wu} using \cite[Proposition 3.1]{Wu} (after correcting the typo in the exponent of $(h,k)$) or \cite[Lemma 1 and Section 5]{Conrey}.
\end{proof}

Let $X=T^{\Delta}$ with $\Delta< \frac{17}{33}$. We write 
\begin{align}\label{second Dirichlet}
\int_{T}^{2T}|L(\tfrac12+it,\chi)|^2|R(t)|^2dt
=
\lim_{\alpha,\beta\to 0}
\int_T^{2T}L(\tfrac12+\alpha+it,\chi)L(\tfrac12+\beta-it,\ol{\chi})|R(t)|^2dt.
\end{align}
By a residue calculation, we see that
\begin{align*}
&{L(1+\alpha+\beta,\chi_0)}+\bigg(\frac{qt}{2\pi HK}\bigg)^{-\alpha-\beta}{L(1-\alpha-\beta,\chi_0)}
\\
&=
-\frac{(\frac{qt}{2\pi HK})^{-(\alpha+\beta)/2}}{(2\pi i)^2}\int_{|z_j|=\tfrac{2^j}{\log T}} L(1+z_1-z_2,\chi_{0})(z_1-z_2)^2
\bigg(\frac{qt}{2\pi HK}\bigg)^{\tfrac{z_1-z_2}{2}}
\prod_{j=1}^2\frac{dz_j}{(z_j-\alpha)(z_j+\beta)}.
\end{align*}
Applying this in Lemma \ref{twist chi} with $HK=hk/(h,k)^2$, we obtain 
\begin{align}\label{z int}
&\int_T^{2T}|L(\tfrac12+it,\chi)|^2|R(t)|^2dt+O(T^{1-\epsilon_{\Delta}})
\\&=
\frac{1}{(2\pi i)^2}\int_{|z_j|=\tfrac{2^j}{\log T}} L(1+z_1+z_2,\chi_{0})G_X(z_1,z_2)(z_1+z_2)^2
\bigg(\int_T^{2T}\bigg(\frac{qt}{2\pi}\bigg)^{\tfrac{z_1+z_2}{2}}dt\bigg)\prod_{j=1}^2\frac{dz_j}{z_j^2},
\end{align}
where  
\[
G_X(z_1,z_2)=\sum_{h,k\leqs X}\frac{\ol{\chi}(h/(h,k)){\chi}(k/(h,k))r(h)\ol{r(k)}}{(hk)^{1/2+(z_1+z_2)/2}}(h,k)^{1+z_1+z_2}.
\]

By Rankin's trick we see that 
\begin{align}\label{Gz}
G_X(\ul{z})=N_X(\ul{z})+O(\mc{E}_X(\ul{z}))
\end{align}
where 
\begin{align*}
N_X(\ul{z})
= 
\sum_{h,k}\frac{\ol{\chi}(h/(h,k)){\chi}(k/(h,k))r(h)\ol{r(k)}}{(hk)^{(1+z_1+z_2)/2}}(h,k)^{1+z_1+z_2}
= 
\prod_p \bigg(1+|r(p)|^2+2\frac{\Re r(p)\ol{\chi }(p)}{p^{(1+z_1+z_2)/2}}\bigg)
\end{align*}
and 
\[
\mc{E}_X(\ul{z})=X^{-\alpha}\prod_p \bigg(1+|r(p)|^2p^\alpha+\frac{ |r(p)|}{p^{(1+\Re z_1+\Re z_2)/2}}(p^\alpha+1)\bigg)
\]
for any $\alpha>0$.
Using the approximation \eqref{Gz} in \eqref{z int} and calculating the integral of the main term via residues at $z_j=0$, we find that the leading term in \eqref{z int} is of size
\[
T(\log T )N_X(\ul{0}) 
\]
with the lower order terms 
involving partial derivatives of $N_X(\ul{z})$. To estimate these we note 
\begin{equation}\label{partial}
\begin{split}
\frac{\partial}{\partial z_1}N_X(\ul{z})\bigg|_{\ul{z}=\ul{0}}
\ll &
N_X(\ul{0})\sum_p\frac{|r(p)|\log p/p^{1/2}}{|1+|r(p)|^2+2\Re\frac{r(p){\ol{\chi(p)}}}{p^{1/2}}|} 
\\
\ll &
N_X(\ul{0})\sum_{\mc{L}^2<p\leqs \exp((\log\mc{L})^2)}\frac{\mc{L}|a(p)|}{p}
\ll
N_X(\ul{0})(\log X)^{1/2+\epsilon}
\end{split}
\end{equation}
by Cauchy--Schwarz, \eqref{smallr} and \eqref{Selberg conj 1}.
Note that the integrand of \eqref{z int} is $\ll (\log T)^3$ and so trivial estimation of the contribution from $\mc{E}_X(\ul{z})$ to this integral gives
\[
\mc{J}_i\leqs C(\log T) N_X(\ul{0}) +O((\log T)\mc{E}(X))+O(T^{-\epsilon_\Delta}),
\] 
where $\mc{E}(X)=\mc{E}_X(-2/\log T,-4/\log T)$. Now
\begin{equation}\label{ratio bound}
\begin{split}
\frac{\mc{E}(X)}{N_X(\ul{0})}
\ll &
\exp\Big(-a\log X + \sum_{p} |r(p)|^2(p^\alpha-1)+O(\sum_p|r(p)|p^{-1/2})\Big)
\\
\leqs &
\exp\Big(-a\log X + \sum_{\mc{L}^2<p\leqs \exp((\log \mc{L})^2)} (p^\alpha-1)\frac{\mc{L}^2|a(p)|^2}{p\log^2 p}+O(\frac{\mc{L}}{\log\mc{L}})\Big)
\end{split}
\end{equation}
which, similarly to \eqref{rankin bound}, is $o(1)$ on choosing $\alpha=1/(\log \mc{L})^3$. 
Thus, we find that when $L_i(s)=L(s,\chi)$, 
\[
\mc{J}_i\ll (\log T) N_X(\ul{0}) \ll \log T\prod_p  \bigg(1+|r(p)|^2+2\Re\frac{ r(p)\ol{a_i}(p)}{p^{1/2}}\bigg),
\]
which completes the proof of Proposition \ref{spec upper bound prop} for the case for Dirichlet $L$-functions after noting that the factor of $\log T$ can be absorbed into the $o(1)$ term in the product.


\subsection{$GL(2)$ $L$-functions} Here we are in the case where $L_i=L(s,f)$ is the $L$-function of a primitive cusp form $f$. 
Let $\Phi: \mathbb{R}\rightarrow \mathbb{R}$ be a smooth function supported on $[1/4, 2]$ satisfying $\Phi(x)\geqs 1$ on $x\in [1,2]$ along with the bounds $\Phi^{(j)}(x)\ll (\log T)^{j}$ for each $j\geq 0$. 
 
\begin{lem}\label{twisted 2nd mmt thm 2}Let $L(s,f)=\sum_{n\geqs 1}\lambda_f(n)n^{-s}$ be the $L$-function of a Hecke newform (holomorphic or Maa\ss) of level $N$. Let $\alpha, \beta$ be complex numbers satisfying $\alpha, \beta \ll 1/\log T$. Let $(h,k)=1$ and $\Phi$ be as above. Then
 we have
\begin{align*}
&\int_\mb{R}L(\tfrac12+\alpha+it,f)L(\tfrac12+\beta-it,{f})(h/k)^{-it}\Phi\big(\frac{t}{T}\big)dt
\\
= &
\frac{1}{{h^{1/2+\beta}k^{1/2+\alpha}}}\int_\mb{R} \Big(L^*(1+\alpha+\beta,f\otimes f)Z_{\alpha,\beta}(h,k)
\\
&
+
\bigg(\frac{t\sqrt{N}}{2\pi \sqrt{hk}}\bigg)^{-2(\alpha+\beta)}L^*(1-\alpha-\beta,f\otimes  f)Z_{-\beta,-\alpha}(h,k)\Big)\Phi\big(\frac{t}{T}\big)dt
+O((hk)^{1/2+\epsilon}T^{1/2+\theta+\epsilon})
\end{align*}
where $L^*(s, f\otimes f)=\sum_{n}\lambda_f(n)^2 n^{-s}$ and 
\begin{align}\label{Zdef}
Z_{\alpha, \beta}(h,k)=\prod_{p\mid hk}(1-p^{-2(1+\alpha+\beta)})^{-1}\Big(\lambda_f(p)-\frac{\lambda_f(p)}{p^{1+\alpha+\beta}}\Big).
\end{align} 
\end{lem}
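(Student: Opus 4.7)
The plan is to open the product of $L$-values by an approximate functional equation, isolate the diagonal $mh=nk$ as the main term, and bound the off-diagonal by shifted convolution estimates.

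First I would apply an approximate functional equation for $L(\tfrac12+\alpha+it,f)L(\tfrac12+\beta-it,f)$ to write it as
\[
\sum_{m,n\geqs 1}\frac{\lambda_f(m)\lambda_f(n)}{m^{1/2+\alpha+it}n^{1/2+\beta-it}}V^+_t(mn) + \Big(\frac{t\sqrt N}{2\pi}\Big)^{-2(\alpha+\beta)}\sum_{m,n\geqs 1}\frac{\lambda_f(m)\lambda_f(n)}{m^{1/2-\beta+it}n^{1/2-\alpha-it}}V^-_t(mn),
\]
where $V^\pm_t$ are smooth cutoffs essentially supported on $mn\lesssim tN^{1/2}$. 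Multiplying by $(h/k)^{-it}\Phi(t/T)$ and integrating in $t$, the phase $(mh/nk)^{-it}$ combined with repeated integration by parts restricts attention to the vicinity of $mh=nk$ up to an error of size $T^\epsilon$.

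Next, on the diagonal I set $m=k\ell$, $n=h\ell$, valid since $(h,k)=1$; the twist cancels and the remaining sum reads
\[
\frac{1}{h^{1/2+\beta}k^{1/2+\alpha}}\sum_{\ell\geqs 1}\frac{\lambda_f(k\ell)\lambda_f(h\ell)}{\ell^{1+\alpha+\beta}}.
\]
Using Hecke multiplicativity this Dirichlet series factors into a contribution from primes coprime to $hk$, giving the full $L^*(1+\alpha+\beta, f\otimes f)$, times a finite local product over primes dividing $hk$; a direct computation using the Hecke recursion $\lambda_f(p)\lambda_f(p^j)=\lambda_f(p^{j+1})+\lambda_f(p^{j-1})$ identifies this finite product as $Z_{\alpha,\beta}(h,k)$ from \eqref{Zdef}. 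The dual side of the functional equation is treated symmetrically, producing the second main term with the swap $(\alpha,\beta)\mapsto(-\beta,-\alpha)$; the prefactor $(t\sqrt N/2\pi\sqrt{hk})^{-2(\alpha+\beta)}$ results from combining the gamma-factor ratio $(t\sqrt N/2\pi)^{-2(\alpha+\beta)}$ with the extra $(hk)^{\alpha+\beta}$ picked up during the reindexing $m=k\ell$, $n=h\ell$.

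Finally, the off-diagonal sum over $mh\neq nk$ is a shifted convolution problem of the form $\sum_{mh-nk=r\neq 0}\lambda_f(m)\lambda_f(n)F_t(m,n)$. I would apply Voronoi summation in the $n$-variable modulo $k$, converting the inner sum into Kloosterman sums weighted by Bessel transforms of the cutoff, and then invoke the Kuznetsov formula (Petersson in the holomorphic case). Combined with the pointwise Kim--Sarnak bound $|\lambda_f(p^\ell)|\leqs 2p^{\ell\theta}$ needed only at primes dividing $hk$, this yields the claimed error $(hk)^{1/2+\epsilon}T^{1/2+\theta+\epsilon}$. The principal obstacle is this off-diagonal estimate with its explicit polynomial dependence on $h$ and $k$: although the shifted convolution machinery is well developed (Good for untwisted moments, Blomer--Harcos and others for refinements in the $GL(2)$ setting), tracking the precise dependence on $h$, $k$, $T$ requires careful bookkeeping through the trace formula, and it is through this step that the Ramanujan exponent $\theta$ enters the final bound. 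An alternative route is to appeal to existing twisted second moment computations in the literature and adapt them to accommodate the shift parameters $\alpha,\beta$.
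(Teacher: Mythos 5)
Your outline of the diagonal computation matches the paper exactly: you apply an approximate functional equation with shifts, restrict to $mh=nk$, reindex $m=k\ell$, $n=h\ell$ using $(h,k)=1$, and evaluate the Dirichlet series $\sum_\ell \lambda_f(k\ell)\lambda_f(h\ell)/\ell^s$ via Hecke multiplicativity to produce $L^*(s,f\otimes f)$ times the local factor $Z_{\alpha,\beta}(h,k)$; the bookkeeping that turns the prefactor $(hk)^{\alpha+\beta}$ plus the gamma-ratio $(t\sqrt N/2\pi)^{-2(\alpha+\beta)}$ into $(t\sqrt N/(2\pi\sqrt{hk}))^{-2(\alpha+\beta)}$ on the dual side is also as in the paper. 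Where you diverge is in the off-diagonal. You propose to prove the error bound $O((hk)^{1/2+\epsilon}T^{1/2+\theta+\epsilon})$ from scratch via Voronoi summation modulo $k$ and Kuznetsov/Petersson, which is indeed the hard part and would require careful uniformity in $h,k$; the paper instead takes precisely the ``alternative route'' you mention in your last sentence, quoting Proposition~3.4 of Andersen--Thorner \cite{AT} (which subsumes the earlier holomorphic-case results of Bernard and K\"uhn--Robles--Zeindler) to obtain the approximate functional equation with the off-diagonal error already in this polynomial form. From there the paper carries out the diagonal sum by plugging into the Mellin-integral weight $V_{\alpha,\beta}$, pushing the $s$-contour to $\Re s = -1/4+\epsilon$, and picking up the pole at $s=0$; the factor $G(s)=e^{s^2}\big((\alpha+\beta)^2-(2s)^2\big)/(\alpha+\beta)^2$ is arranged to kill the competing pole of the Dirichlet series at $s=-(\alpha+\beta)/2$.

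Two technical points your sketch glosses over but the paper addresses. First, your claim that integration by parts in $t$ localises to the diagonal ``up to an error of size $T^\epsilon$'' is too optimistic as stated: oscillation only disposes of terms with $mh/nk$ far from $1$, and the near-diagonal with $mh\neq nk$ is genuinely the shifted-convolution regime that produces the $T^{1/2+\theta}$ loss, not a $T^\epsilon$ one. Second, the main terms individually have a simple pole at $\alpha+\beta=0$ (through $L^*(1+\alpha+\beta,f\otimes f)$); the paper first proves the formula for $|\alpha+\beta|\gg 1/\log T$ and then deduces the stated range $\alpha,\beta\ll 1/\log T$ by analytic continuation, a step your argument should also record since the two main terms only combine to something holomorphic at $\alpha+\beta=0$.
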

\begin{proof} 
This follows from \cite{AT}, which improves earlier results for the holomorphic case in \cite{Ber, KRZ}. 
Using \cite[Proposition 3.4]{AT}, we have for $|\alpha+ \beta|\gg 1/\log T$
\begin{align*}
&\int_\mb{R}L(\tfrac12+\alpha+it,f)L(\tfrac12+\beta-it,{f})(h/k)^{-it}\Phi(t/T)dt
+O((hk)^{1/2}T^{1/2+\theta+\epsilon})
\\
= &
\sum_{hm=kn}\frac{\lambda_f(m)\lambda_f(n)}{m^{1/2+\alpha}n^{1/2+\beta}}\int_\mb{R}V_{\alpha,\beta}(mn,t)\Phi(t/T)dt
\\
&
+
\sum_{hm=kn}\frac{\lambda_f(m)\lambda_f(n)}{m^{1/2-\beta}n^{1/2-\alpha}}\int_\mb{R}X_{\alpha,\beta,t}V_{-\beta,-\alpha}(mn,t)\Phi(t/T)dt
\end{align*}
where 
\[
V_{\alpha,\beta}(x)
=
\frac{1}{2\pi i }\int_{1-i\infty}^{1+i\infty} \frac{G(s)}{s}x^{-s} g_{\alpha,\beta}(s,t)ds
\]
with \[G(s)=
e^{s^2}\frac{(\alpha+\beta)^2-(2s)^2}{(\alpha+\beta)^2},
\]  and where $g_{\alpha,\beta}(s,t)$ and $X_{\alpha,\beta,t}$ are ratios of gamma factors satisfying 
\begin{equation}\label{g bound}
g_{\alpha,\beta}(s,t)
= 
\bigg(\frac{t\sqrt{N}}{2\pi}\bigg)^{2s}\Big(1+O\big(\frac{|s|^2}{t}\big)\Big),\quad
X_{\alpha,\beta,t}
= 
\bigg(\frac{t\sqrt{N}}{2\pi}\bigg)^{-2(\alpha+\beta)}\Big(1+O\big(\frac{|\alpha^2-\beta^2|}{t}\big)\Big)
\end{equation}
(see e.g. Lemma 2 of \cite{Ber}).
Using the definition of $V_{\alpha,\beta}(x)$ and moving the $m,n$-sum inside, we encounter the Dirichlet series 
\begin{align*}
\sum_{hm=kn}\frac{\lambda_f(m)\lambda_f(n)}{m^{1/2+\alpha+s}n^{1/2+\beta+s}}
= &
\frac{1}{k^{1/2+\alpha+s}h^{1/2+\beta+s}}\sum_{l\geqs 1}\frac{\lambda_f(kl)\lambda_f(hl)}{l^{1+\alpha+\beta+2s}}
\end{align*}
since $(h,k)=1$. Using multiplicativity and Hecke relations (see e.g. \cite[Proof of Lemma 7.9]{BFKMMS}), we see that 
\begin{align}
D(s;h,k):=\sum_{l\geq 1}\frac{\lambda_f(kl)\lambda_f(hk)}{l^s}=L^*(s,f\otimes f)\prod_{p\mid hk}(1-p^{-2s})^{-1}\Big(\lambda_f(p)-\frac{\lambda_f(p)}{p^s}\Big)
\end{align}  
where $L^*(s,f\otimes f)=\sum_{n\geq 1}\lambda_f(n)^2n^{-s}$. 
Shifting the contour to $\Re(s)=-1/4+\epsilon$ we encounter a simple pole at $s=0$ 
which gives the main term. The contribution from the remaining contour is seen to be $\ll T^{1/2}(hk)^{-1/4+\theta+\epsilon}$ by \eqref{g bound}, the rapid decay of $G(s)$, the convexity bound $L^*(1/2+\epsilon+iy,f\otimes f)\ll (1+|y|)^{1+\epsilon}$, and the bound
\[
 \prod_{p\mid hk}\Big(\lambda_f(p)+O(\frac{\lambda_f(p)}{p^{1/2+2\epsilon}})\Big) \ll (hk)^{\theta+\epsilon}.
\]
By analytic continuation, the result hold for $\alpha, \beta\ll 1/\log T$. 
\end{proof}

 To complete the proof of Proposition \ref{spec upper bound prop} for the case of $GL(2)$ $L$-functions, we follow the same argument as before in the case for Dirichlet $L$-functions after replacing Lemma \ref{twist chi} by Lemma \ref{twisted 2nd mmt thm 2} and $G_X(z_1, z_2)$ by
\[
H_X(z_1,z_2)
=
\sum_{h,k\leqs X}\frac{r(h)\ol{r(k)}Z_{z_1,z_2}(h/(h,k),k/(h,k))}{(hk)^{(1+z_1+z_2)/2}}(h,k)^{1+z_1+z_2}
\] 
where $Z_{z_1, z_2}$ is defined in \eqref{Zdef}. Note that we have 
\begin{align}
Z_{\ul{0}}(h,k)=\prod_{p\mid hk}(1-p^{-2})^{-1}\Big(\lambda_f(p)-\frac{\lambda_f(p)}{p}\Big)
\end{align}
and that $\lambda_f(p)(1-1/p)=a_i(p)(1+o(1))$ for large $p$.
Thus, the main contribution to $\mc{J}_i$ in this case, aside from some factors of $\log T$ which can be absorbed into the $o(1)$, is
\[
\sum_{h,k}\frac{r(h)\ol{r(k)}Z_{\ul{0}}(h/(h,k),k/(h,k))}{(hk)^{1/2}}(h,k)
=
\prod_p\bigg(1+|r(p)|^2+2(1+o(1))\Re\frac{r(p)\ol{a_i(p)}}{p^{1/2}}\bigg)
\]
as required. Again, the lower order terms coming from partial derivatives can be bounded similarly to \eqref{partial} using Proposition \ref{res sum bound cor} whilst the error from the tail sums $h>X$, $k>X$ are also of a lower order by similar arguments to before (again using Proposition \ref{res sum bound cor}).


\section{Conditional upper bounds in $t$-aspect: Proof of Proposition \ref{gen upper bound prop}}\label{cond upper sec} 

\subsection{Upper bounds for the logarithm of the product of $L$-functions} Let $\pi_i$ be irreducible unitary cuspidal automorphic representations of $GL(d_i)$ over $\mathbb{Q}$ such that $\pi_i\not \cong \pi_j$ for $1\leq i\not=j\leq m$. For a given $1\leqs i\leqs m$ write 
\[
M(s)=M_i(s)=\prod_{\substack{j=1\\j\neq i}}^m L_j(s)=\sum_{n=1}^\infty \frac{B_i(n)}{n^s}
\]
so that \[\frac{M'(s)}{M(s)}=\sum_{n}\frac{\Lambda(n)b(n)}{n^s}\] where
\[
b(n)=b_i(n)=\sum_{\substack{j=1\\j\neq i}}^ma_{\pi_j}(n)
\]
where $a_{\pi}(n)$ are defined as in \eqref{apidef}. 
The goal is to show that 
 \begin{align*}
\frac{1}{T}\int_{T}^{2T}|M(\tfrac12+it)|^{2}|R(t)|^2dt
\ll
\exp\bigg(\sqrt{\frac{\log T}{\log_2 T\log_3 T}}\bigg)\prod_p\bigg(1+|r(p)|^2+2\Re\frac{r(p)\ol{b(p)}}{p^{1/2}}\bigg).
 \end{align*}
We plan to compute the integral over $t$ by using Harper's method \cite{Ha}. A key estimate will be
\begin{equation}\label{b prime sum}
\sum_{p\leqs x}\frac{|b(p)|^2}{p}=(m-1)\log\log x+O(1)
\end{equation}
which follows by \eqref{Selberg conj 1} and the assumption on $\pi_j$. 
To measure the size of exceptional sets where Dirichlet polynomials obtain large values we need the following standard lemma. 
\begin{lem}[\cite{Sound}, Lemma 3]\label{mont lem}Let $T$ be large and let $2\leqs x\leqs T$. Let $k$ be a natural number such that $x^{k}\leqs T$. Then for any complex numbers $c(p)$ we have 
\[
\frac{1}{T}\int_{T}^{2T}\bigg|\sum_{p\leqs x}\frac{c(p)}{p^{1/2+it}}\bigg|^{2k}dt\ll k!\bigg(\sum_{p\leqs x}\frac{|c(p)|^2}{p}\bigg)^k.
\]
\end{lem}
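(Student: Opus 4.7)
The plan is to recognize $\big|\sum_{p\leqs x}c(p)p^{-1/2-it}\big|^{2k}$ as the squared modulus of a Dirichlet polynomial of length $\leqs x^k$, apply the classical mean value theorem for Dirichlet polynomials, and then bound the resulting coefficient sum via a multinomial expansion. This avoids any direct treatment of off-diagonal prime tuples.

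Setting $D(s) := \sum_{p\leqs x} c(p)/p^s$ and raising to the $k$th power gives
\[
D(s)^k = \sum_{n\geqs 1}\frac{d_k(n)}{n^s}, \qquad d_k(n) = \sum_{\substack{p_1\cdots p_k = n \\ p_i\leqs x \text{ prime}}}c(p_1)\cdots c(p_k),
\]
which is supported on integers $n \leqs x^k \leqs T$. The Montgomery--Vaughan mean value theorem then yields
\[
\int_T^{2T}\bigg|\sum_{n\leqs x^k}\frac{d_k(n)}{n^{1/2+it}}\bigg|^2 dt = T\sum_n\frac{|d_k(n)|^2}{n} + O\bigg(\sum_n |d_k(n)|^2\bigg),
\]
and since $\sum_n |d_k(n)|^2 \leqs x^k \sum_n |d_k(n)|^2/n \leqs T\sum_n |d_k(n)|^2/n$ by the hypothesis $x^k \leqs T$, dividing by $T$ gives
\[
\frac{1}{T}\int_T^{2T}|D(\tfrac12+it)|^{2k}\,dt \ll \sum_n\frac{|d_k(n)|^2}{n}.
\]

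It then remains to compute $\sum_n |d_k(n)|^2/n$ combinatorially. Parametrising $n = r_1^{m_1}\cdots r_s^{m_s}$ with distinct primes $r_1<\cdots<r_s\leqs x$ and exponents $m_i \geqs 1$ summing to $k$, unique factorisation gives $d_k(n) = \binom{k}{m_1,\ldots,m_s}c(r_1)^{m_1}\cdots c(r_s)^{m_s}$, whence
\[
\sum_n \frac{|d_k(n)|^2}{n} = \sum_{s\geqs 1}\sum_{r_1<\cdots<r_s\leqs x}\sum_{\substack{m_i\geqs 1\\ \sum_i m_i=k}}\binom{k}{m_1,\ldots,m_s}^2 \prod_{i=1}^s \frac{|c(r_i)|^{2m_i}}{r_i^{m_i}}.
\]
On the other hand, multinomial expansion of $k!\big(\sum_{p\leqs x}|c(p)|^2/p\big)^k$ produces the same double sum but with one factor $\binom{k}{m_1,\ldots,m_s}$ replaced by $k!$. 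Since $\binom{k}{m_1,\ldots,m_s} \leqs k!$ trivially (as $m_i! \geqs 1$), the diagonal sum is dominated term-by-term by $k!\big(\sum_{p\leqs x}|c(p)|^2/p\big)^k$, yielding the asserted bound.

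The only point demanding careful attention is the absorption of the error term in the mean value theorem; this is precisely where the hypothesis $x^k \leqs T$ is essential, since otherwise the $O(\sum_n |d_k(n)|^2)$ error would dominate the diagonal and the clean Gaussian-style factor of $k!$ would be lost. The combinatorial identity $\binom{k}{m_1,\ldots,m_s}\cdot m_1!\cdots m_s! = k!$ is what produces exactly this factor, consistent with the Gaussian $2k$th moment heuristic for random Dirichlet polynomials with prime support.
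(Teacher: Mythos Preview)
Your proof is correct and follows the standard argument (the one given in Soundararajan's paper \cite{Sound}, which is the source the present paper cites rather than reproving the lemma). The paper itself provides no proof of this lemma, so there is nothing further to compare.
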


To obtain an upper bound for $|M|^2$ we apply the following generalisation of Soundararajan's result for the Riemann zeta function \cite[Proposition]{Sound} due to Chandee \cite{Chandee}. 
\begin{lem}[\cite{Chandee}, Theorem 2.1]\label{sound ineq lem}
Assume GRH holds for $L(s,\pi_j)$, $1\leqs j\leqs m$, $j\neq i$. Then for $2\leqs Z\leqs T^2$ and $t\in[T,2T]$ we have 
\begin{equation}\label{sound ineq 0}
\log|M(1/2+it)|\leqs \Re\sum_{n\leqs Z}\frac{\Lambda(n)b(n)w_Z(n)}{n^{1/2+it}\log n}+C_M\frac{\log T}{\log Z}+O(1)
\end{equation}
for some positive constant $C_M$ where 
\[
w_Z(n)=n^{-1/2\log Z}\Big(1-\frac{\log n}{\log Z}\Big)\leqs 1.
\]
\end{lem}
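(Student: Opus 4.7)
The plan is to apply Chandee's theorem \cite{Chandee} to the product $M(s)=\prod_{j\neq i}L_j(s)$, all of whose nontrivial zeros lie on $\Re s=\tfrac12$ by the assumed GRH. Although Chandee's argument is written for a single primitive $L$-function, it extends verbatim to any finite product of primitive $L$-functions under GRH: the only structural inputs are the Dirichlet series and Euler product for $\Re s>1$, a functional equation with gamma factors of total degree $\deg M=\sum_{j\neq i}d_j$, and the zero-counting estimate $\#\{\rho=\tfrac12+i\gamma:|\gamma-t|\leqs 1\}\ll (\deg M)\log T$, all of which follow factor by factor from the corresponding properties of the $L_j$. By multiplicativity of $M'/M$, its Dirichlet coefficients are precisely $\Lambda(n)b(n)$, so the prime sum on the right of \eqref{sound ineq 0} is indeed the correct arithmetic object to compare against.

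I would then sketch the underlying Soundararajan--Chandee argument in two steps. First, set $\sigma_0=\tfrac12+\lambda/\log Z$ for a suitable fixed $\lambda>0$. Under GRH, each contribution to $\Re(M'/M)(\sigma+it)$ from a zero $\rho=\tfrac12+i\gamma$ equals $(\sigma-\tfrac12)/((\sigma-\tfrac12)^2+(t-\gamma)^2)\geqs 0$ for $\sigma>\tfrac12$, so $\sigma\mapsto\log|M(\sigma+it)|$ is monotone non-decreasing on $(\tfrac12,\infty)$ up to a contribution of size $O((\deg M)\log T)\cdot(\sigma-\tfrac12)$ coming from the gamma factors and Hadamard constants. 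Integrating from $\tfrac12$ to $\sigma_0$ and using the zero-counting bound to estimate
\[
\int_{1/2}^{\sigma_0}\sum_\rho\frac{\sigma-1/2}{(\sigma-1/2)^2+(t-\gamma)^2}\,d\sigma\ll\frac{(\deg M)\log T}{\log Z}
\]
yields $\log|M(\tfrac12+it)|\leqs\log|M(\sigma_0+it)|+C_M\log T/\log Z+O(1)$ for an appropriate constant $C_M$ depending on the degrees and conductors of the $L_j$ for $j\neq i$.

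The second step converts $\log|M(\sigma_0+it)|$, which sits just outside the region of absolute convergence, into the truncated prime sum on the right-hand side of \eqref{sound ineq 0}. The specific weight $w_Z(n)=n^{-1/\log Z}(1-\log n/\log Z)$ arises naturally from computing
\[
\sum_{n\leqs Z}\frac{\Lambda(n)b(n)w_Z(n)}{n^{\sigma_0+it}\log n}=\frac{1}{2\pi i}\int_{(c)}\bigg(-\frac{M'}{M}(\sigma_0+it+w)\bigg)\frac{Z^w-1}{w^2\log Z}\,dw
\]
for some $c>0$, and then shifting the contour past $\Re w=0$: the residue at $w=0$ produces $\log M(\sigma_0+it)$, while the residues at the shifted zeros $w=\rho-\sigma_0-it$ contribute a sum over $\rho$ bounded once more by $O((\deg M)\log T/\log Z)$ on GRH. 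Rewriting $n^{-\sigma_0}=n^{-1/2}\cdot n^{-1/\log Z}$ absorbs an extra factor into $w_Z$, matching the stated weight exactly. Taking real parts and combining with the monotonicity step gives \eqref{sound ineq 0}.

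The main obstacle is the careful bookkeeping of the zero-sum contributions in both the monotonicity step and the contour shift: the density of zeros of $M$ within a unit-height window near $T$ is $(\deg M)\log T/(2\pi)+O(1)$ rather than $\log T/(2\pi)+O(1)$ as for a single primitive $L$-function, and this is precisely where $\deg M$ enters the constant $C_M$. No additional hypothesis on the Dirichlet coefficients $b(n)$ is needed beyond GRH, since the bound is pointwise in $t$; all the coefficient information used later (including Selberg orthogonality and pointwise bounds) enters only when the present lemma is integrated against $|R(t)|^2$ in the proof of Proposition \ref{gen upper bound prop}.
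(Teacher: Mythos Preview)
The paper does not supply its own proof of this lemma; it is simply quoted as Theorem~2.1 of Chandee~\cite{Chandee}. Your proposal therefore goes beyond what the paper does, and your observation that Chandee's argument applies verbatim to the product $M=\prod_{j\neq i}L_j$ (since the functional equation, Euler product, and zero-counting bound of order $(\deg M)\log T$ all hold factor by factor under GRH) is exactly the justification the paper leaves implicit.

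Two small imprecisions in your sketch are worth noting. In the first step you display and bound the integral $\int_{1/2}^{\sigma_0}\sum_\rho(\sigma-\tfrac12)/((\sigma-\tfrac12)^2+(t-\gamma)^2)\,d\sigma$, but this is unnecessary: its non-negativity is what gives $\log|M(\tfrac12+it)|\leqs\log|M(\sigma_0+it)|+O((\sigma_0-\tfrac12)\log T)$, and the $C_M\log T/\log Z$ comes from the gamma-factor and conductor contribution, not from the zeros. Second, to match the stated weight $w_Z(n)=n^{-1/(2\log Z)}(1-\log n/\log Z)$ one takes $\lambda=\tfrac12$, so the factor absorbed is $n^{-1/(2\log Z)}$ rather than $n^{-1/\log Z}$. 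Neither point affects the conclusion.
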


Since we are interested in extreme values, powers of $\log T$ will not meaningfully affect our final bound and  so we first trivially bound the sum over prime powers. 

\begin{lem}\label{prime sum cor}
Assume GRH holds for $L(s,\pi_j)$, $1\leqs j\leqs m$, $j\neq i$ and that the generalised Ramanujan conjecture holds for $L(s,\pi_j)$ if $d_j\geqs 4$. Then for $2\leqs Z\leqs T^2$ and $t\in[T,2T]$ we have 
\begin{equation}\label{sound ineq}
\log|M(1/2+it)|\leqs \Re\sum_{\substack{p \leqs Z}}\frac{b(p)w_Z(p)}{p^{1/2+it}}+C_M\frac{\log T}{\log Z}+O(\log\log Z).
\end{equation}
\end{lem}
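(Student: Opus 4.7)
The plan is to start from the conclusion of Lemma~\ref{sound ineq lem}, which already delivers
\[
\log|M(\tfrac12+it)|\leqs \Re\sum_{n\leqs Z}\frac{\Lambda(n)b(n)w_Z(n)}{n^{1/2+it}\log n}+C_M\frac{\log T}{\log Z}+O(1),
\]
and to show that the non-prime part of this Dirichlet sum is $O(\log\log Z)$. Writing $n=p^\ell$ and using $\Lambda(p^\ell)/\log p^\ell=1/\ell$, the bracketed sum splits into the advertised main term $\Re\sum_{p\leqs Z}b(p)w_Z(p)/p^{1/2+it}$ plus the prime-power tail
\[
\mathcal T=\sum_{\ell\geqs 2}\frac{1}{\ell}\Re\sum_{p\leqs Z^{1/\ell}}\frac{b(p^\ell)w_Z(p^\ell)}{p^{\ell(1/2+it)}},
\]
so the task reduces to proving $|\mathcal T|\ll \log\log Z$ under the standing hypotheses.

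The $\ell=2$ level will dominate and dictate the size of the error. Using $|b(p^2)|\leqs \sum_{j\neq i}|a_j(p^2)|$, I would apply the generalised Ramanujan conjecture when $d_j\geqs 4$ (giving $|a_j(p^2)|=O(1)$) and the Rudnick--Sarnak estimate \eqref{a pp bound} when $d_j\leqs 3$ (giving $|a_j(p^2)|\ll 1+|a_j(p)|^2$). Since $|w_Z|\leqs 1$, the contribution is then bounded by
\[
\sum_{j\neq i}\Big(\sum_{p\leqs Z^{1/2}}\frac{1}{p}+\sum_{p\leqs Z^{1/2}}\frac{|a_j(p)|^2}{p}\Big)\ll \log\log Z,
\]
by Mertens's theorem together with the $\pi\cong\pi'$ case of Selberg's orthogonality \eqref{Selberg conj 1}.

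For $\ell\geqs 3$ I aim for the bound $O(1)$ per $j$, which is absorbed by the $\ell=2$ estimate. With GRC (when $d_j\geqs 4$) the coefficients are bounded, and $\sum_{\ell\geqs 3}\sum_p p^{-\ell/2}/\ell$ converges trivially. In the unconditional ranges $d_j\leqs 3$ I again invoke \eqref{a pp bound} to reduce matters to controlling $\sum_{p\leqs Z^{1/\ell}}|a_j(p)|^\ell/p^{\ell/2}$; the pointwise bound \eqref{RS bound} (with the Kim--Sarnak refinement for $d_j=2$) handles $d_j\leqs 2$ for every $\ell\geqs 3$.

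The main obstacle lies in the range $d_j=3$ with $3\leqs \ell\leqs 10$, where \eqref{RS bound} alone only gives $|a_j(p)|^\ell/p^{\ell/2}\ll p^{-\ell/10}$ and no longer forces summability. I would resolve this by combining the pointwise bound \eqref{RS bound} with the fourth moment estimate \eqref{fourth moment} (valid for $d=3$ self-dual) through H\"older's inequality against $\sum_{p\leqs x}|a_j(p)|^4/p\ll \log\log x$, and would fall back on Hypothesis H \eqref{H hyp} in the remaining cases; this is legitimate since Hypothesis H is unconditional for $d_j\leqs 4$ by Rudnick--Sarnak and Kim. Assembling these estimates delivers $|\mathcal T|\ll \log\log Z$, which is exactly the error term in the statement.
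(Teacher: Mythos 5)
Your decomposition of the non-prime part into $\ell=2$ plus $\ell\geqs 3$ and the treatment of the squares (Ramanujan for $d_j\geqs 4$, the Rudnick--Sarnak bound \eqref{a pp bound} for $d_j\leqs 3$, Mertens, and the diagonal case of \eqref{Selberg conj 1}) matches the paper and is sound. Where you diverge is the $\ell\geqs 3$ tail, and there your route has a genuine gap. Having reduced to $\sum_p |a_j(p)|^\ell/p^{\ell/2}$ for $d_j=3$, you invoke the fourth-moment bound \eqref{fourth moment}; but as the paper states it, that bound is proved only for $d\leqs 2$ or $d=3$ \emph{self-dual}, so for a general non-self-dual $GL(3)$ representation it is not available. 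Your stated fallback, Hypothesis H \eqref{H hyp}, also does not directly apply: Hypothesis H controls $\sum_p |a_\pi(p^\ell)|^2(\log p)^2/p^\ell$, i.e.\ the coefficient $a_\pi(p^\ell)$ at the prime power, not $|a_\pi(p)|^\ell/p^{\ell/2}$, which is what \eqref{a pp bound} produces after the reduction. Translating the one into the other would itself require an argument you have not supplied, and a naive Cauchy--Schwarz against $\sum_p (\log p)^{-2}$ diverges.

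The paper sidesteps all of this with a simpler and fully uniform device: write $|a_j(p)|^\ell/p^{\ell/2}=\big(|a_j(p)|^2/p\big)\cdot\big(|a_j(p)|/p^{1/2}\big)^{\ell-2}$, bound the second factor pointwise by \eqref{RS bound} (giving $p^{-(\ell-2)/(d_j^2+1)}$), and conclude
\[
\sum_p \frac{|a_j(p)|^{\ell}}{p^{\ell/2}}\ll \sum_p \frac{|a_j(p)|^2}{p^{1+(\ell-2)/(d_j^2+1)}}<\infty
\]
for every $\ell\geqs 3$, since the Rankin--Selberg $L$-function converges for $\sigma>1$. This handles $d_j\leqs 3$ (and indeed the large-$\ell$ range $\ell>d^2+1$) without Kim--Sarnak for $GL(2)$, without fourth moments, and without Hypothesis H. You should replace the H\"older/fourth-moment/Hypothesis-H paragraph with this observation; the rest of your argument then closes cleanly.
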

\begin{proof}
	The terms in Lemma \ref{sound ineq lem} with powers $\ell>d^2+1$ with $d=\max_j d_j$ are 
\begin{align}
\ll \sum_{\substack{p^\ell \leqs Z\\ \ell > d^2+1}}\frac{|b(p^\ell)|}{p^{\ell/2}}\ll \sum_{p}\frac{1}{p^{\ell/(d^2+1)}}\ll 1
\end{align}
which follows from \eqref{a bound} and the fact that $b(p^\ell)=\sum_{j\neq i}a_{j}(p^\ell)$. For $2\leqs \ell \leqs d^2+1$ we note that if $d_j\leqs 3$ then \eqref{a pp bound} gives
\[
|a_j(p^\ell)|\ll 1+ |a_j(p)|^\ell.
\] 
If $d_j\geqs 4$ then the generalised Ramanujan conjecture implies $|a_j(p^\ell)|\leqs d_j$. 
Thus the prime squares contribute
\[
\ll \sum_{p\leqs Z^{1/2}}\frac{|b(p^2)|}{p} 
\ll \log\log Z
\]
by \eqref{Selberg conj 1} whilst by \eqref{a bound} we have 
\[
\sum_{p}\frac{|b(p)|^\ell}{p^{\ell/2}}\ll \sum_{j\neq i}\sum_{p}\frac{|a_j(p)|^2}{p^{1+(\ell-2)/(d_j^2+1)}}.
\]
Since the Rankin--Selberg $L$-function is convergent for $\sigma>1$, this last sum is bounded for $\ell\geqs 3$.

\end{proof}

\subsection{Initial splitting and the exceptional set} 
Following \cite{Ha}, the aim is to take a reasonably large $Z$ in \eqref{sound ineq} and split the sum over primes into pieces with small variance so that for typical $t$ their exponential can be approximated by a short truncated Taylor series. For us, the choice at where to begin this splitting is dictated by the support of the resonator coefficients, namely $p\leqs \exp((\log\mc{L})^2)$ - the main interaction between $M$ and the resonator will come from this piece. This gives a large chunk of primes in the first sum but as the following lemma shows, this is just about affordable and the exceptional set of large values of this sum is sufficiently small in measure.      
\begin{lem}\label{excep lem}
For $Z\leqs X$  let 
\[
E=\Big\{t\in [T,2T]: \Big|\sum_{\substack{p\leqs \exp((\log \mc{L})^2)}}\frac{b(p)w_Z(p)}{ p^{1/2+it}}\Big|\geqs \frac{\log T}{100(\log\mc{L})^2}\Big\}.
\]
Then 
\[\mu(E)\ll T\exp\Big(-4(1-o(1))\frac{\log T}{\log_2 T}\Big).\]
In particular, under the assumptions in Proposition \ref{gen upper bound prop} for $X= T^{\Delta}$ with $\Delta<1/2$, we have 
\[
\int_{E}|M(\tfrac12+it)|^2|R(t)|^2dt=o(T).
\]
\end{lem}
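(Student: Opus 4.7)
My approach is in two natural parts, matching the two claims in the statement.

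\textbf{Bound on $\mu(E)$.} I would apply Markov's inequality to a high even moment of the prime sum defining $E$. Writing $S(t):=\sum_{p\leqs\exp((\log\mc{L})^2)}b(p)w_Z(p)/p^{1/2+it}$, we have
\[
\mu(E)\leqs V^{-2k}\int_T^{2T}|S(t)|^{2k}\,dt
\]
for any positive integer $k$. Lemma~\ref{mont lem} applies as long as $k(\log\mc{L})^2\leqs \log T$, giving
\[
\int_T^{2T}|S(t)|^{2k}\,dt\ll T\,k!\bigg(\sum_{p\leqs e^{(\log\mc{L})^2}}\frac{|b(p)|^2}{p}\bigg)^{\!k}\ll T\,k!(C\log_3 T)^k,
\]
using \eqref{b prime sum} with $\log\log e^{(\log\mc{L})^2}=2\log\log\mc{L}\sim 2\log_3 T$. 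Taking $k$ at the maximal admissible value $k\sim\log T/(\log\mc{L})^2\sim 4\log T/(\log_2 T)^2$, applying Stirling, and noting $V\asymp\log T/(\log_2 T)^2$, the dominant term in the logarithm is $k(\log k-2\log V)\sim -k\log_2 T\sim -4\log T/\log_2 T$, which yields the stated measure bound after collecting the $\log_3 T$ lower-order terms.

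\textbf{Bound on the integral.} I would use Cauchy--Schwarz,
\[
\int_E|M(\tfrac12+it)|^2|R(t)|^2\,dt\leqs \mu(E)^{1/2}\bigg(\int_T^{2T}|M(\tfrac12+it)|^4|R(t)|^4\,dt\bigg)^{1/2}.
\]
Under GRH, Soundararajan's method (equivalently, Lemma~\ref{prime sum cor} with $Z$ a fixed power of $T$) yields the pointwise bound $|M(\tfrac12+it)|^4\ll T^{(2D+o(1))/\log_2 T}$ with $D=\sum_{j\neq i}d_j$, which is $T^{o(1)}$. Since the hypothesis $\Delta<1/2$ ensures $X^2<T$, the mean-value theorem for Dirichlet polynomials applied to $R^2=\sum_n c(n)n^{-it}$ yields
\[
\int_T^{2T}|R(t)|^4\,dt\ll T\sum_n |c(n)|^2=T\prod_{p\in\mc{P}}(1+4|r(p)|^2+|r(p)|^4).
\]
A careful partial-summation argument exploiting the cancellation in $\sum_{p\in\mc{P}}|a(p)|^2/(p\log^2 p)$ (via \eqref{Selberg conj 1} applied to $|a(p)|^2=\sum_{i,j}a_i(p)\overline{a_j(p)}$) shows that $\sum_{p\in\mc{P}}|r(p)|^2\ll \log T/\log_2 T$, so the last quantity is $T^{1+O(1)/\log_2 T}$. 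Combining these estimates with the measure bound from the first part produces a total bound of the form $T\exp((D+O(1)-2)\log T/\log_2 T)$, which one verifies is $o(T)$.

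\textbf{Main obstacle.} The delicate point is the final constant comparison in the integral bound: the gain of $-2/\log_2 T$ coming from $\mu(E)^{1/2}$ must strictly dominate the combined loss from the pointwise GRH bound on $|M|^2$ and the mean-value estimate for $|R|^4$. This is exactly why $V$ was defined with the explicit factor $100$ (so that the exponent $4$ appears in the measure bound rather than something smaller), and why the bound on $\sum_{p\in\mc{P}}|r(p)|^2$ must exploit the cancellation from \eqref{Selberg conj 1} — the naive pointwise bound $|a(p)|\leqs (\log p)^{1-\epsilon}$ on $\mc{P}$ would give only $O(\log T\log_3 T/\log_2 T)$, which would be too weak to conclude.
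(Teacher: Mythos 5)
This is correct and identical in substance to the paper: you apply Lemma~\ref{mont lem} to $S(t)$ with \eqref{b prime sum}, take $k$ at the maximal admissible size $\asymp\log T/(\log\mathcal{L})^2\sim 4\log T/(\log_2 T)^2$, and the Stirling bookkeeping gives $\mu(E)\ll T\exp(-4(1-o(1))\log T/\log_2 T)$.

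\textbf{Part 2 (bound on the integral): there is a genuine gap.} Your use of Cauchy--Schwarz together with a \emph{pointwise} GRH bound on $|M|$ does not work for general degree. With
\[
\int_E|M|^2|R|^2\,dt\leqs\mu(E)^{1/2}\Bigl(\int_T^{2T}|M|^4|R|^4\,dt\Bigr)^{1/2}
\]
and the pointwise input $|M(\tfrac12+it)|^4\ll\exp\bigl((2D+o(1))\tfrac{\log T}{\log_2 T}\bigr)$, you obtain
\[
\int_E|M|^2|R|^2\,dt\ll T\exp\Bigl((D-1-\tfrac{\epsilon}{2}+o(1))\tfrac{\log T}{\log_2 T}\Bigr),
\]
where $D=\sum_{j\neq i}d_j$ and $\epsilon$ quantifies how far $\Delta$ is below $1/2$. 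This exponent is nonnegative whenever $D\geqs 2$ — which happens already for $m\geqs 3$, or for $m=2$ with one factor of degree $\geqs 2$ — so the bound is \emph{not} $o(T)$ in the generality of Proposition~\ref{gen upper bound prop}. The $\exp(-2\log T/\log_2 T)$ you gain from $\mu(E)^{1/2}$ (versus $\exp(-\log T/\log_2 T)$ from $\mu(E)^{1/4}$) is dwarfed by the $\exp((D+o(1))\log T/\log_2 T)$ loss from squaring the pointwise bound. Your closing assertion that the exponent ``is $o(T)$'' is exactly where the argument breaks; the $O(1)$ in $D+O(1)-2$ is not small and the sign is wrong.

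The paper avoids this by applying H\"older with exponents $(4,4,2)$:
\[
\int_E|M|^2|R|^2\,dt\leqs\mu(E)^{1/4}\Bigl(\int_T^{2T}|M|^8\,dt\Bigr)^{1/4}\Bigl(\int_T^{2T}|R|^4\,dt\Bigr)^{1/2},
\]
then invokes the \emph{conditional eighth moment bound} $\int_T^{2T}|M(\tfrac12+it)|^8\,dt\ll T(\log T)^{O(1)}$ from \cite{MTB}. The crucial point is that the eighth moment costs only $T^{1/4}(\log T)^{O(1)}$, which is degree-independent up to powers of $\log T$, whereas the pointwise GRH bound costs $\exp((cD+o(1))\log T/\log_2 T)$ — an entirely different order. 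Combined with $\mu(E)^{1/4}\ll T^{1/4}\exp(-(1-o(1))\log T/\log_2 T)$ and the same $\int|R|^4$ estimate you use, the net exponent becomes $-\epsilon/2+o(1)<0$, giving $o(T)$ for every $D$. If you replace your Cauchy--Schwarz step with this H\"older decomposition and the moment bound from \cite{MTB}, the rest of your argument (including the $\int|R|^4$ estimate and the observation that $\Delta<1/2$ keeps $R^2$ of length $<T$) goes through.
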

\begin{proof}
By Lemma \ref{mont lem} along with \eqref{b prime sum} we have 
\begin{align*}
\frac{1}{T}\mu({E})
\ll &
 k!\bigg(\sum_{\substack{p\leqs \exp((\log\mc{L})^2)}}\frac{|b(p)|^2}{ p}\bigg)^k \bigg(\frac{\log T}{100(\log\mc{L})^2}\bigg)^{-2k}
 \\
 \ll & k^{1/2}\bigg(\frac{Ck\log_3 T}{(\log T/(\log \mc{L})^2)^2}\bigg)^k
\end{align*}
for some $C$ provided $k\leqs \frac{\log T}{(\log\mc{L})^2}$. Choosing $k=\frac{\log T}{(\log\mc{L})^2}=4(1+o(1))\frac{\log T}{(\log_2 T)^2}$ this is 
\[
\ll 
\bigg(\frac{C^\prime(\log_2 T)^2\log_3 T}{\log T}\bigg)^{4(1+o(1))\log T/(\log_2 T)^2}
\leqs 
\exp\big(-4(1-o(1))\frac{\log T}{\log_2T}\big)
\]
giving the first part of the lemma. 

By H\"older's inequality we have 
\begin{align*}
\int_{{E}} |M(\tfrac12+it)|^2 |R(t)|^2dt
\leqs &
\mu(E)^{1/4}
 \bigg(\int_{T}^{2T}|M(\tfrac12+it)|^{8}dt\bigg)^{1/4}
 \bigg(\int_{T}^{2T}|R(t)|^4dt\bigg)^{1/2}
\\
\ll &
T^{1/2}\exp\bigg(-(1+o(1))\frac{\log T}{\log_2 T}\bigg)\bigg(\int_{T}^{2T}|R(t)|^4dt\bigg)^{1/2}
\end{align*}
on applying the conditional bound $\int_{T}^{2T} |M(1/2+it)|^8dt\ll T(\log T)^{O(1)}$ which follows from \cite{MTB}.

By the mean value theorems for Dirichlet polynomials, for $X\leqs T^{1/2-\epsilon}$ we have 
\begin{align*}
\int_{T}^{2T}|R(t)|^4dt
\ll &
T\sum_{\substack{n_1n_2=n_3n_4\\n_j\leqs X}}|r(n_1)r(n_2)r(n_3)r(n_4)|
\leqs 
T\prod_p \big(1+4|r(p)|^2+|r(p)|^4\big)
\\
\leqs &
T\exp\bigg(4\sum_{p}|r(p)|^2
\bigg)
\ll 
T\exp\bigg(4\sum_{\mc{L}^2<p}\frac{\mc{L}^2|a(p)|^2}{p\log^2 p}
\bigg)
\\
\ll &
T\exp\bigg(4(1+o(1))\frac{\log X}{\log_2 X}\bigg)\ll T\exp\Big((2-\epsilon+o(1))\frac{\log T}{\log_2T}\Big)
\end{align*}
by \eqref{res sum bound}. 
\end{proof}

\subsection{Remaining splittings and an inequality for $|M|^2$} A key point is that on the set $[T,2T]\backslash E$ the exponential of the sum in the above lemma can be approximated by a short truncated Taylor series to give a Dirichlet polynomial of length $\leqs T^{1/10}$. Our choice of parameters throughout will be dictated by the need to have short Dirichlet polynomials whilst also having small exceptional sets. 

For integer $\mf{i}\geqs 0$ let 
\[
Z_\mf{i}=\exp(e^\mf{i}(\log\mc{L})^2),\qquad \qquad Z_{-1}=1.
\]
Let $J$ be the minimal integer such that $Z_J\geqs \exp({2}{C_M}\sqrt{\log T\log_2 T\log_3 T})$, so that $
J=(\tfrac12+o(1))\log\log T$
and note
\[
C_M\frac{\log T}{\log Z_J}
\leqs 
\frac{1}{2}\sqrt{\log T/\log_2 T\log_3 T}.
\]
By a slight abuse of notation we write
$w_{Z_\mf{j}}(p)$ as $w_\mf{j}(p)$.
Let
\[
P_{\mf{i},\mf{j}}(t)=\sum_{Z_{\mf{i}-1}<p\leqs Z_\mf{i}}\frac{b(p)w_\mf{j}(p)}{p^{1/2+it}}.
\]
so that 
\[
\sum_{p\leqs Z_\mf{j}}\frac{b(p)w_\mf{j}(p)}{p^{1/2+it}}=\sum_{i=0}^\mf{j} P_{\mf{i},\mf{j}}(t).
\]
Set
\[
\ell_\mf{i}=\frac{\log T}{100(\log \mc{L})^2}e^{-5\mf{i}/4},\qquad r_\mf{i}=\frac{\log T}{e^\mf{i}(\log\mc{L})^{2+\epsilon}}.
\]

We remark that $P_{\mf{i},\mf{j}}(t)^{10\ell_\mf{i}}$ is a Dirichlet polynomial of length $Z_{\mf{i}}^{10\ell_{\mf{i}}}=T^{e^{-\mf{i}/4}/10}$. By Lemma \ref{mont lem} and \eqref{b prime sum} the measure of the set where $|P_{\mf{i},\mf{j}}(t)|\geqs \ell_\mf{i}$ is, for any $k\leqs \log T/(e^\mf{i}(\log \mc{L})^2)$,
\begin{equation}\label{excep set P_j}
\ll k^{1/2}\bigg(\frac{k\sum_{Z_{\mf{i}-1}<p\leqs Z_\mf{i}}|b(p)|^2/p}{e\ell_\mf{i}^2}\bigg)^k
\ll
 \bigg(\frac{ck}{\ell_\mf{i}^2}\bigg)^k
 \ll 
 \exp\Big(-\tfrac{c^\prime\log T}{e^\mf{i}(\log_2 T)^{1+\epsilon}}\Big)
\end{equation}
for some constants $c,c^\prime$ on choosing $k=r_\mf{i}$ (we don't choose $k$ as large as possible because we will need shorter Dirichlet polynomials later and this bound is sufficient).
Note this bound kills
\begin{equation}\label{extra term bound}
\exp\Big(C_M\frac{\log T}{\log Z_{\mf{i}-1}}\Big)=\exp\Big((1+o(1))\frac{4C_M\log T}{e^{\mf{i}-1}(\log_2 T)^2}\Big)
\end{equation}
which will be the extra term acquired from applying the inequality \eqref{sound ineq} at the $\mf{i}$th step.  
As a final remark on our parameter choices: the reason for factor $e^{-5\mf{i}/4}$ in $\ell_{\mf{i}}$ is, first of all, so that we have a polynomial of length $T^{e^{-\mf{i}/4}/10}$ which, after taking the product over all $\mf{i}$, is still short (see \eqref{sum length} below). A factor of $e^{-c\mf{i}}$, $c>1$, is required to have the decay in the exponent of $T$ however if $c>3/2$ then $\ell_J$ would not be large enough to guarantee \eqref{excep set P_j}.
The reason for the factor of $e^{-\mf{i}}$ in $r_\mf{i}$ is so that \eqref{excep set P_j} is comparable with \eqref{extra term bound} for all $\mf{i}$. 

Now, by Stirling's formula for $|z|\leqs L$ we have
\begin{equation*}\label{stirling}
e^z=(1+O(e^{-9L}))\sum_{m\leqs 10L}\frac{z^m}{m!}.
\end{equation*}
Therefore, if $|P_{\mf{i},\mf{j}}(t)|\leqs \ell_i$ we have 
\[
\exp(P_{\mf{i},\mf{j}}(t))=(1+O(e^{-9\ell_\mf{i}}))\sum_{m\leqs 10\ell_\mf{i}}\frac{P_{\mf{i},\mf{j}}(t)^m}{m!}. 
\]
The multinomial theorem gives 
\[
P_{\mf{i},\mf{j}}(t)^m=m!\sum_{\substack{\Omega(n)=m\\ p|n\implies Z_{\mf{i}-1}<p\leqs Z_\mf{i}}}\frac{c(n)W_\mf{j}(n)\mf{g}(n)}{n^{1/2+it}}
\]
where 
\begin{equation}\label{coeffs 1}
c(n)=\prod_{p^{\alpha_p}||n}b(p)^{\alpha_p},\qquad W_\mf{j}(n)=\prod_{p^{\alpha_p}||n}w_\mf{j}(p)^{\alpha_p}
\end{equation}
are the completely multiplicative extensions of $b(p)$ and $w_\mf{j}(p)$ to the integers and $\mf{g}$ is the multiplicative function for which 
\begin{equation}\label{coeffs 2}
\mf{g}(p^\alpha)=\frac{1}{\alpha!}.
\end{equation}
Thus, if we denote
\[
\mathcal{N}_{\mf{i},\mf{j}}(t)=\sum_{\substack{\Omega(n)\leqs 10\ell_\mf{i}\\ p|n\implies Z_{\mf{i}-1}< p\leqs Z_\mf{i}}}\frac{c(n)W_\mf{j}(n)\mf{g}(n)}{n^{1/2+it}}
\]
then on such a set of $t$ we have 
\[
\exp\big(2\Re P_{\mf{i},\mf{j}}(t)\big)=(1+O(e^{-9\ell_\mf{i}}))|\mc{N}_{\mf{i},\mf{j}}(t)|^2.
\] 

Accordingly, if $t$ is such that $|{P}_{\mf{i},\mf{j}}(t)|\leqs \ell_\mf{i}$ for all $0\leqs \mf{i}\leqs \mf{j}$ then 
\begin{equation}\label{prod}
\exp\Big(2\Re\sum_{p\leqs Z_\mf{j}}\frac{w_\mf{j}(p)}{p^{{1}/{2}+it}}\Big)
=(1+o(1))
\prod_{\mf{i}=0}^\mf{j}\big|\mathcal{N}_{\mf{i},\mf{j}}(t)\big|^2
\end{equation}
since $\sum_{\mf{i}=0}^\mf{j}e^{-9\ell_\mf{i}}=o(1)$. We note that the right hand side is a Dirichlet polynomial of length
\begin{equation}\label{sum length}
\leqs \prod_{\mf{i}=0}^JZ_\mf{i}^{10\ell_\mf{i}}
= 
T^{\tfrac{1}{10}\sum_{\mf{i}=0}^Je^{-\mf{i}/4}}
\leqs
T^{1/2}
\end{equation}
We can now state an upper bound for the $|M(\tfrac12+it)|$ in terms of these short Dirichlet polynomials.

\begin{lem}\label{zeta lem} Assume GRH for $L(s,\pi_j)$ for $1\leqs j\leqs m$ and let $t\in[T,2T]$. Then either 
\[
|{P}_{0,\mf{j}}(t)|> \ell_0
\] 
for some $0\leqs \mf{j}\leqs J$ or 
\begin{multline*}
|M(\tfrac{1}{2}+it)|^{2}
\ll
\exp\bigg(\frac{1}{2}\sqrt{\frac{\log T}{\log_2 T\log_3 T}}\bigg)\prod_{\mf{i}=0}^J \big|\mathcal{N}_{\mf{i},J}(t)\big|^2
\\
+
(\log T)^{O(1)}\sum_{\substack{ 0\leqs \mf{j}\leqs J-1\\ \mf{j}+1\leqs l\leqs J}} \exp\Big(\frac{C_M\log T}{\log Z_{\mf{j}}}\Big)
 \bigg(\frac{|{P}_{\mf{j}+1,l}(t)|}{\ell_{\mf{j}+1}}\bigg)^{2r_\mf{j}}\prod_{\mf{i}=0}^\mf{j} \big|\mathcal{N}_{\mf{i},\mf{j}}(t)\big|^2.
  \end{multline*}
\end{lem}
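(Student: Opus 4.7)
The plan is to follow Harper's iterative decomposition, adapted to the fact that the weight $w_\mf{j}(p)$ varies with the scale $Z_\mf{j}$ at which Corollary \ref{prime sum cor} is applied. Throughout I assume the first alternative of the lemma fails, so that $|P_{0,\mf{j}}(t)|\leqs \ell_0$ for every $0\leqs \mf{j}\leqs J$. I then split $[T,2T]$ according to the largest scale at which the Taylor expansion \eqref{prod} for the exponentiated prime sum remains valid.

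\emph{Good case.} Suppose $|P_{\mf{i},J}(t)|\leqs \ell_\mf{i}$ for every $0\leqs \mf{i}\leqs J$. I apply Corollary \ref{prime sum cor} with $Z=Z_J$, square, and absorb $(\log\log Z_J)^{O(1)}$ into the exponential. Using the definition of $J$ to bound $C_M\log T/\log Z_J\leqs \tfrac{1}{2}\sqrt{\log T/\log_2 T\log_3 T}$, and the Taylor expansion \eqref{prod} (valid precisely because all $|P_{\mf{i},J}(t)|\leqs \ell_\mf{i}$) to replace $\exp(2\Re\sum_\mf{i} P_{\mf{i},J}(t))$ by $(1+o(1))\prod_{\mf{i}=0}^{J}|\mathcal{N}_{\mf{i},J}(t)|^{2}$, I obtain the first term of the lemma.

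\emph{Bad case.} Otherwise there exists a pair $(\mf{i}^*,l)$ with $\mf{i}^*\leqs l\leqs J$ such that $|P_{\mf{i}^*,l}(t)|>\ell_{\mf{i}^*}$; take $\mf{i}^*$ minimal. The running assumption forces $\mf{i}^*\geqs 1$, so set $\mf{j}=\mf{i}^*-1\in\{0,\ldots,J-1\}$. By minimality, $|P_{\mf{i},l'}(t)|\leqs \ell_\mf{i}$ holds for every $0\leqs \mf{i}\leqs \mf{j}$ and every $\mf{i}\leqs l'\leqs J$; in particular this holds at weight $l'=\mf{j}$. I now apply Corollary \ref{prime sum cor} with $Z=Z_\mf{j}$ and repeat the squaring and Taylor-expansion step, this time at level $\mf{j}$, to obtain
\[
|M(\tfrac12+it)|^{2}\ll (\log T)^{O(1)}\exp\Big(\tfrac{2C_M\log T}{\log Z_\mf{j}}\Big)\prod_{\mf{i}=0}^{\mf{j}}|\mathcal{N}_{\mf{i},\mf{j}}(t)|^{2}.
\]
Since $|P_{\mf{j}+1,l}(t)|/\ell_{\mf{j}+1}>1$, I may insert $(|P_{\mf{j}+1,l}(t)|/\ell_{\mf{j}+1})^{2r_\mf{j}}\geqs 1$ without loss. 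As the pair $(\mf{j},l)$ is unknown a priori, I further bound by the sum over all admissible pairs, giving the second term of the lemma.

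\emph{Main obstacle.} The principal subtlety is that the weight $w_\mf{j}(p)$ changes with the scale, so the Taylor approximation at level $\mf{j}$ requires control of $P_{\mf{i},\mf{j}}(t)$ rather than $P_{\mf{i},J}(t)$. Selecting $\mf{i}^*$ as the minimal index for which $|P_{\mf{i}^*,l}(t)|$ is large for \emph{some} $l$ is what makes the argument close: minimality then yields simultaneous control of $P_{\mf{i},l'}(t)$ at every heavier weight $l'\geqs \mf{i}$, and in particular at $l'=\mf{j}$, which is what the Taylor step at scale $Z_\mf{j}$ demands. The penalty factor $(|P_{\mf{j}+1,l}|/\ell_{\mf{j}+1})^{2r_\mf{j}}$ does no work in the proof of the lemma itself, but the exponent $r_\mf{j}$ is calibrated (via the balance between \eqref{excep set P_j} and \eqref{extra term bound}) so that, when the bound is later integrated against $|R(t)|^2$, the exceptional-set contribution exactly dominates the Soundararajan overhead.
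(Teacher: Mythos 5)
Your proof is correct and follows the paper's argument in every essential respect: the paper simply asserts the covering $[T,2T]=\bigcup_{\mathfrak{j}=0}^{J}S(\mathfrak{j})$ whereas you make it explicit by taking $\mathfrak{i}^*$ minimal, but the application of Lemma~\ref{prime sum cor} at scale $Z=Z_{\mathfrak{j}}$, the Taylor-expansion step \eqref{prod}, and the insertion of the penalty factor $(|P_{\mathfrak{j}+1,l}(t)|/\ell_{\mathfrak{j}+1})^{2r_{\mathfrak{j}}}>1$ all coincide with the paper's proof. The factor $2C_M$ appearing in your intermediate display, versus $C_M$ in the lemma statement, reflects a harmless abuse of notation in the paper (the constant is renamed after squaring) and has no bearing on the final balance against $r_{\mathfrak{j}}$.
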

\begin{proof}
Suppose $|{P}_{0,\mf{j}}(t)|<\ell_0$. For $0\leqs\mf{ j}\leqs J-1$ let 
\[
S(j)=\left\{t\in[T,2T]: 
\begin{array}{lr}
&|{P}_{\mf{i},l}(t)|\leqs \ell_\mf{i}\,\,\,\,\,\,\,\,\,\,\,\forall 1\leqs \mf{i}\leqs \mf{j},\,\, \forall \mf{j}\leqs l\leqs J;\\
&|{P}_{\mf{j}+1,l}(t)|>\ell_{\mf{j}+1} \text{ for some } \mf{j}+1\leqs l\leqs J
\end{array}
\right\}
\]
and 
\[
S(J)=\bigg\{t\in[T,2T]: |{P}_{\mf{i},J}(t)|\leqs \ell_\mf{i}\,\,\,\,\,\,\,\forall 1\leqs \mf{i}\leqs J\bigg\}.
\]
Then since $[T,2T]=\cup_{\mf{j}=0}^J S(\mf{j})$,  for $t\in[T,2T]$ we have
\begin{equation}\label{zeta decomp}
|M(\tfrac{1}{2}+it)|^{2}
\leqs 
\mathds{1}_{t\in S(J)}\cdot|M(\tfrac{1}{2}+it)|^{2}+\sum_{\substack{0\leqs \mf{j}\leqs J-1\\\mf{j}+1\leqs l\leqs J}}\mathds{1}_{t\in S_l(\mf{j})}\cdot|M(\tfrac{1}{2}+it)|^{2}
\end{equation}
where 
\[
S_l(\mf{j})=\left\{t\in[T,2T]: 
\begin{array}{lr}
|{P}_{\mf{i},l}(t)|\leqs \ell_\mf{i} &\,\,\,\,\,\,\,\,\,\,\,\forall 1\leqs \mf{i}\leqs j,\,\, \forall \mf{j}\leqs l\leqs J;\\
|{P}_{\mf{j}+1,l}(t)|>\ell_{\mf{j}+1} &
\end{array}
\right\}.
\]

We apply Corollary \ref{prime sum cor} to each $M$ on the right hand side of \eqref{zeta decomp}. If $t\in S_l(\mf{j})$ then we take $Z=Z_\mf{j}$ to give
\[
|M(\tfrac{1}{2}+it)|^{2}
\ll
\exp\bigg(2\Re\sum_{p\leqs Z_\mf{j}}\frac{b(p)w_\mf{j}(p)}{p^{1/2+it}}
+\frac{C_M\log T}{\log Z_\mf{j}}
+
O(\log_2 T)\bigg).
\]
For the first sum over primes in the exponential we apply \eqref{prod}. To capture the small size of the set, we multiply by 
\[
 \bigg(\frac{|{P}_{\mf{j}+1,l}(t)|}{\ell_{\mf{j}+1}}\bigg)^{2r_\mf{j}}>1.
 \] 
 If $t\in S(J)$ then we omit this last step. 
\end{proof}

\subsection{Applying the inequality} We apply Lemma \ref{zeta lem} to compute
\[
\int_T^{2T}|M(\tfrac12+it)|^2|R(t)|^2dt
\]
 By Lemma \ref{excep lem} we may disregard the $t$ for which $|P_{0,\mf{j}}(t)|>\ell_0$ since this gives a contribution $o(T)$. By Lemma \ref{zeta lem} the integral over the remaining set is then 
 \begin{multline}\label{inner int ineq}
 \ll \int_T^{2T}\bigg(
\exp\bigg(\frac{1}{2}\sqrt{\frac{\log T}{\log_2 T\log_3 T}}\bigg)\prod_{\mf{i}=0}^J \big|\mathcal{N}_{\mf{i},J}(t)\big|^2
+
(\log T)^{O(1)}
\\
\times\sum_{\substack{ 0\leqs \mf{j}\leqs J-1\\ \mf{j}+1\leqs l\leqs J}} \exp\Big(\frac{C_M\log T}{\log Z_\mf{j}}\Big)
 \bigg(\frac{|{P}_{\mf{j}+1,l}(t)|}{\ell_{\mf{j}+1}}\bigg)^{2r_\mf{j}}\prod_{\mf{i}=0}^\mf{j} \big|\mathcal{N}_{\mf{i},\mf{j}}(t)\big|^2\bigg)|R(t)|^2dt.
 \end{multline}

To facilitate computations we note the following general observations. Suppose we are given $R$ sets $\mc{S}_j\subset\mb{N}$ and  Dirichlet polynomials 
\[
A_j(s) = \sum_{n\in {\mathcal S}_j} a_j(n) n^{-s}, 
\]
where the $\prod_{j=1}^{R} n_j\leqs M=o(T)$ for all $n_j \in {\mathcal S}_j$. 
Then by the mean value theorems for Dirichlet polynomials we have 
\begin{align*}
\frac{1}{T}\int_T^{2T}\prod_{j=1}^R \big|A_j(it)\big|^2dt
\sim&
\sum_{n\leqs M}\Big|\sum_{\substack{n=n_1\cdots n_R\\n_j\in \mc{S}_j}}a_1(n_1)\cdots a_R(n_R)\Big|^2 
\end{align*}
If for any $j_1,j_2$ with $j_1\neq j_2$ the elements of $\mc{S}_{j_1}$ are all coprime to the elements of $\mc{S}_{j_2}$ then
there is at most one way to write $n= \prod_{j=1}^{R} n_j$ with $n_j \in {\mathcal S}_j$ and so
\begin{align*} 
\label{4.6}
\frac{1}{T} \int_T^{2T} \prod_{j=1}^{R} |A_j(it)|^2 dt 
&= (1+O(NT^{-1})) \sum_{n\le N} \Big| \sum_{\substack{ n= n_1 \cdots n_R \\ n_j\in {\mathcal S}_j} }\prod_{j=1}^{R} a_j(n_j) \Big|^2  \nonumber
\\
&= (1+O(NT^{-1})) \prod_{j=1}^R \Big( \sum_{n_j \in {\mathcal S}_j} |a_j(n_j)|^2 \Big) \nonumber \\ 
&= (1+ O(NT^{-1} ))^{1-R} \prod_{j=1}^{R} \Big( \frac 1T \int_{T}^{2T} |A_j(it)|^2 dt \Big). 
\end{align*}

Since $\prod_{\mf{i}=0}^JN_{\mf{i}, \mf{j}}(t)$ is a Dirichlet polynomial of length $\leqs T^{1/2}$ by \eqref{sum length} and $R(t)$ is a Dirichlet polynomial of length $X=T^\Delta$ with $\Delta<1/2$, we can apply the above observations so that \eqref{inner int ineq} becomes 
 \begin{multline*}
 \ll T\exp\bigg(\frac{1}{2}\sqrt{\frac{\log T}{\log_2 T\log_3 T}}\bigg)
  \int |\mc{N}_{0,J}(t)|^2|R(t)|^2dt 
\cdot
\prod_{\mf{i}=1}^J \int\big|\mathcal{N}_{\mf{i},J}(t)\big|^2dt
\\
+
T(\log T)^{O(1)}
\sum_{\substack{ 0\leqs \mf{j}\leqs J-1\\ \mf{j}+1\leqs l\leqs J}} \exp\Big(\frac{C_M\log T}{\log Z_\mf{j}}\Big)
 \int |\mc{N}_{0,\mf{j}}(t)|^2|R(t)|^2dt 
\\
\times 
\prod_{\mf{i}=1}^\mf{j} \int\big|\mathcal{N}_{\mf{i},\mf{j}}(t)\big|^2dt
\int \bigg(\frac{|{P}_{\mf{j}+1,l}(t)|}{\ell_{\mf{j}+1}}\bigg)^{2r_\mf{j}} dt
 \end{multline*}
where $\int$ denotes $\frac{1}{T}\int_T^{2T}$ for short. 

Note that, by \eqref{excep set P_j} and \eqref{extra term bound} we have 
\begin{align*}
\exp\Big(\frac{C_M\log T}{\log Z_\mf{j}}\Big)
\cdot
\frac{1}{T}\int_T^{2T} \bigg(\frac{|{P}_{\mf{j}+1,l}(t)|}{\ell_{\mf{j}+1}}\bigg)^{2r_\mf{j}} dt
\ll &
 \exp\Big(-\frac{C\log T}{e^{\mf{j}+1}(\log_2 T)^{1+\epsilon}}\Big)
 \\
 \ll &
 \exp\big(-(\log T)^{1/2+o(1)}\big).
\end{align*}
Since the number of terms in the sum over $\mf{j},l$ along with the $(\log T)^{O(1)}$ term can be absorbed into this exponential, we arrive at 
\begin{equation}
\begin{split}
\label{M int bound}
&
\frac{1}{T}\int_T^{2T}|M(\tfrac12+it)|^2|R(t)|^2dt 
\\
\ll &
\exp\bigg(\sqrt{\frac{\log T}{\log_2 T\log_3 T}}\bigg)
\max_{\mf{j}\leqs J}
  \frac{1}{T}\int_T^{2T} |\mc{N}_{0,\mf{j}}(t)|^2|R(t)|^2dt 
\prod_{\mf{i}=1}^\mf{j} \frac{1}{T}\int_T^{2T}\big|\mathcal{N}_{\mf{i},\mf{j}}(t)\big|^2dt.
\end{split}
\end{equation}

\subsection{Computing the mean values}

It remains to compute 
\[
 \frac{1}{T}\int_T^{2T} |\mc{N}_{0,\mf{j}}(t)|^2|R(t)|^2dt 
\cdot
\prod_{\mf{i}=1}^\mf{j} \frac{1}{T}\int_T^{2T}\big|\mathcal{N}_{\mf{i},\mf{j}}(t)\big|^2dt.
\]
Applying the mean value theorem for Dirichlet polynomials we have 
\begin{align*}
\frac{1}{T}\int_T^{2T}|\mc{N}_{0,\mf{j}}(t)|^2|R(t)|^2dt 
=
(1+O(T^{-\epsilon}))
\sum_{\substack{m_1n_2=m_2n_2\\\Omega(m_j)\leqs \ell_0\\p|m_j\implies p\leqs Z_0\\n_j\leqs X}}
\frac{\mf{c}_\mf{j}(m_1)\ol{\mf{c}_\mf{j}(m_2)}r(n_1)\ol{r(n_2)}}{(m_1m_2)^{1/2}}
\end{align*}
and 
\begin{align*}
\prod_{\mf{i}=1}^\mf{j}\frac{1}{T}\int_T^{2T}|\mc{N}_{\mf{i},\mf{j}}(t)|^2dt 
=
(1+O(T^{-\epsilon}))\prod_{\mf{i}=1}^\mf{j}
\sum_{\substack{\Omega(m)\leqs \ell_\mf{i}\\p|m\implies Z_{\mf{i}-1}<p\leqs Z_\mf{i}}}
\frac{|\mf{c}_\mf{j}(m)|^2}{m}
\end{align*}
with
\[
\mf{c}_\mf{j}(m)=c(m)\mf{g}(m)W_\mf{j}(m)
\]
where we recall the definition of these coefficients from \eqref{coeffs 1}, \eqref{coeffs 2}.

Now,
\begin{equation}\label{c sum bound}
\prod_{\mf{i}=1}^\mf{j}
\sum_{\substack{p|n\implies Z_{\mf{i}-1}<p\leqs Z_\mf{i}}}
\frac{|\mf{c}_\mf{j}(m)|^2}{m}
\leqs \exp\bigg(\sum_{Z_0<p\leqs Z_\mf{j}}\frac{|b(p)|^2}{p}\bigg)
\ll
\bigg(\frac{\log Z_J}{\log Z_0}\bigg)^{m-1}
\end{equation}
by \eqref{b prime sum}.
It thus suffices to show
\begin{equation}\label{J bound}
\begin{split}
\sum_{\substack{m_1n_2=m_2n_2\\\Omega(m_j)\leqs \ell_0\\p|m_j\implies p\leqs Z_0\\n_j\leqs X}}
\frac{\mf{c}_\mf{j}(m_1)\ol{\mf{c}_\mf{j}(m_2)}r(n_1)\ol{r(n_2)}}{(m_1m_2)^{1/2}} 
\ll &
\prod_p\bigg(1+|r(p)|^2+2(1+o(1))\Re\frac{r(p)\ol{b(p)}}{p^{1/2}}\bigg).
\end{split}
\end{equation}
Assuming this for the moment, plugging \eqref{c sum bound} and \eqref{J bound}  into \eqref{M int bound} gives 
\begin{multline*}
\frac{1}{T}\int_T^{2T}|M(\tfrac12+it)|^2|R(t)|^2dt
\\
\ll 
\exp\bigg(\sqrt{\frac{\log T}{\log_2 T\log_3 T}}\bigg) 
\prod_p\bigg(1+|r(p)|^2+2(1+o(1))\Re\frac{r(p)\ol{b(p)}}{p^{1/2}}\bigg).
\end{multline*}
and Proposition \ref{gen upper bound prop} follows.

To prove \eqref{J bound} we apply Rankin's trick to find that the sum on the left there is for any $\alpha>0$,
\begin{align}\label{c mega sum bound}
&\sum_{\substack{m_1n_2=m_2n_2\\p|n\implies p\leqs Z_0}}
\frac{\mf{c}_\mf{j}(m_1)\ol{\mf{c}_\mf{j}(m_2)}r(n_1)\ol{r(n_2)}}{(m_1m_2)^{1/2}} 
\\
&+
O\bigg(
e^{-\ell_0}\sum_{\substack{m_1n_2=m_2n_2\\p|m_j\implies p\leqs Z_0}}
\frac{|\mf{c}_\mf{j}(m_1){\mf{c}_\mf{j}(m_2)}r(n_1){r(n_2)}|e^{\Omega(m_1)}}{(m_1m_2)^{1/2}} 
\bigg)
\\
&+
O\bigg(
X^{-\alpha}\sum_{\substack{m_1n_2=m_2n_2\\p|m_j\implies p\leqs Z_0}}
\frac{|\mf{c}_\mf{j}(m_1){\mf{c}_\mf{j}(m_2)}r(n_1){r(n_2)}|n_1^\alpha}{(m_1m_2)^{1/2}} 
\bigg)
\end{align}
by symmetry. 

The main term here is 
\begin{align*}
&
\prod_{p\leqs Z_0}\sum_{m_1+n_1=m_2+n_2}\frac{b(p)^{m_1}\ol{b(p)^{m_2}}w_\mf{j}(p)^{m_1}w_\mf{j}(p)^{m_2}r(p^{n_1})\ol{r(p^{n_2})}}{m_1!m_2!p^{(m_1+m_2)/2}}
\\
= &
\prod_{p\leqs Z_0}\bigg((1+|r(p)|^2)\sum_{m\geqs 0}\frac{|b(p)|^{2m}w_\mf{j}(p)^{2m}}{m!^2p^m} 
+
2\Re\frac{r(p)\ol{b(p)}}{p^{1/2}}\sum_{m\geqs 0}\frac{|b(p)|^{2m}w_\mf{j}(p)^{2m}}{m!(m+1)!p^m} \bigg)
\\
= &
\mc{K}(X)\prod_{p}\bigg(1+|r(p)|^2+2\Re\frac{r(p)\ol{b(p)}}{p^{1/2}}B(p)\bigg)
\end{align*}
where 
\[
\mc{K}(X)=\prod_{p\leqs Z_0}\sum_ {m\geqs 0}\frac{|b(p)|^{2m}w_\mf{j}(p)^{2m}}{m!^2p^m} \ll\exp\bigg(\sum_{p\leqs Z_0}\frac{|b(p)|^2}{p}\bigg)\ll(\log Z_0)^{m-1},
\]
by \eqref{b prime sum}  and 
\[
B(p)=\frac{\sum_{m\geqs 0}{|b(p)|^{2m}w_\mf{j}(p)^{2m}}/{m!^2p^m} }{\sum_{m\geqs 0}{|b(p)|^{2m}w_\mf{j}(p)^{2m}}/{m!(m+1)!p^m}}=1+O\Big(\frac{1}{p^{2/(\max d_i^2+1)}}\Big)
\]
by \eqref{a bound}.
Since $B(p)=1+o(1)$ for $p$ in the support of $r$ and $(\log Z_0)^{m-1}$ can be absorbed into this $o(1)$ term of the exponential, it remains to show that the  error terms of \eqref{c mega sum bound} are of a lower order than this.

With a similar calculation the first error term there is 
\begin{align*}
\ll
e^{-\ell_0}(\log Z_0)^{e(m-1)}\prod_{p}\bigg(1+|r(p)|^2+2e(1+o(1))\frac{|r(p)b(p)|}{p^{1/2}}\bigg).
\end{align*}
Since $|r(p)|^2, r(p)b(p)=o(1)$ in the support of $r(\cdot)$ as in \eqref{smallr}, the ratio of this to the main term is then 
\[
\ll e^{-\ell_0}(\log Z_0)^{e(m-1)}\exp\bigg(4e\sum_{p\leqs Z_0}\frac{|r(p)b(p)|}{p^{1/2}}\bigg)=o(1)
\]
on recalling that $\ell_0=\log T/100(\log\mc{L})^2\asymp \log T/(\log_2 T)^2$ and noting that the sum in the exponential is $\ll \sqrt{\log T/\log_2 T}$.

The second error term is 
\begin{align*}
\ll
X^{-\alpha}(\log Z_0)^{m-1}\prod_{p}\bigg(1+|r(p)|^2p^\alpha+2(1+o(1))\frac{|r(p)b(p)|}{p^{1/2-\alpha}}\bigg).
\end{align*}
The ratio of this to the main term is
\[
\ll
\exp\bigg(-\alpha\log X+\sum_{\mc{L}^2<p\leqs \exp((\log \mc{L})^2)}|a(p)|^2\frac{\mc{L}^2}{p\log^2 p}(p^{\alpha}-1)+O\Big(\sqrt{\frac{\log T}{\log_2 T}}\Big)\bigg)
\]
for $\alpha=1/(\log \mc{L})^3$. The usual computations, as in \eqref{Rankin error}, show this is $o(1)$.


\section{Simultaneous extreme values of twists of $GL(2)$ cusp forms:\\ Proof of Theorem \ref{mod q twist thm}}

Let $f,g$ be a fixed primitive (holomorphic or Maa\ss) cusp forms with respect to $\Gamma_0(r)$ and $\Gamma_0(r')$ with trivial central character. 
	As before, if we can find $R(\chi)$ and $V$ such that 
	\begin{align}\label{chi q ineq}
		&\frac{1}{\phi^*(q)}\sideset{}{^*}\sum_{\chi\bmod q} |L(1/2,f\otimes \chi)\overline{ L(1/2,g\otimes \chi)}R(\chi)|^2
		\\
		&\geqs 
		 \frac{V}{\phi^*(q)}\sideset{}{^*}\sum_{\chi\bmod q} \Big(|L(1/2,f\otimes \chi)R(\chi)|^2+|L(1/2,g\otimes \bar\chi)R(\chi)|^2\Big)
	\end{align}
	with $*$ meaning the sum is over primitive characters modulo $q$ and $\phi^*(q)=q-2$, 
	then we must have 
	\begin{align*}
		\max_{\chi\bmod q} \min_{f,g}\Big(|L(1/2,f\otimes \chi)|, L(1/2,g\otimes\chi)|\Big)\geqs \sqrt{V}.
	\end{align*}
	To estimate these mean values we follow \cite{BFKMMS} quite closely and so retain some of their methods, notation and set-up for ease of comparison, although it may differ from our previous sections slightly. 
	
	 We consider the sum on the left hand side of \eqref{chi q ineq} first. 
	From Cauchy's inequality we have
	\begin{align*}
		&\frac{1}{\phi^*(q)}\sideset{}{^*}\sum_{\chi \bmod q}|L(1/2,f\otimes \chi)\overline{L(1/2,g\otimes\chi)}R(\chi)|^2\\
		&\geqs \Big(\frac{1}{\phi^*(q)}\sideset{}{^*}\sum_{\chi \bmod q} |R(\chi)|^2\Big)^{-1}\Big( \frac{1}{\phi^*(q)}
		\sideset{}{^*}\sum_{\chi \bmod q} L(1/2,f\otimes \chi)\overline{L(1/2,g\otimes \chi)}|R(\chi)|^2\Big)^2
	\end{align*}
	Thus it make sense to choose $R(\chi)$ such that $|L(1/2,f\otimes \chi)L(1/2,g\otimes\chi)|$ is large and we follow the choice of $R(\chi)$ in \cite[section 7.5.1]{BFKMMS}.
	
	Let $\lambda_f^*, \lambda_g^*$ be multiplicative functions supported on squarefree positive integers defined by $\lambda_f^*(p)=(1-1/p)^{-1}(\lambda_f(p)-\lambda_g(p)/p)$ and $\lambda_g^*(p)=(1-1/p)^{-1}(\lambda_g(p)-\lambda_f(p)/p)$. For 
	$u\geqs 1$ some parameter depending only on $f$ and $g$, let \begin{align}\label{Gdef}
		\mathcal G:=\{n\geqs 1: (n, urr')=1, \lambda_f^*(n)\lambda_g^*(n)\not=0, \operatorname{sgn}(\lambda_f^*(n))=\operatorname{sgn}(\lambda_g^*(n))\}.
	\end{align}
	Define 
	\begin{align*}
		\varpi(p)=\begin{cases}
			\lambda_f^*(p)\lambda_g^*(p)(\lambda_f^*(p)+\lambda_g^*(p)), & p\in \mathcal G,\\
			0, & p\not \in \mathcal G. 
		\end{cases}
	\end{align*}
	Let 
	\begin{align*}
		\omega(n)=|\varpi(n)|^2,\  \omega_1'(n)=\varpi(n)\lambda_f^*(n),\  \omega_2'(n)=\varpi(n)\lambda_g^*(n),\  \omega'(p)=\omega_1'(p)+\omega_2'(p)
	\end{align*}
	and     	$$R(\chi)=\sum_{n\leqs N}r(n)\varpi(n)\chi(n)$$
where
	\begin{equation}\label{r 2}
r(p)
=
\begin{cases}
\frac{\mc{L}}{p^{1/2}\log p}{} \,\,\, &\text{ for } \mc{L}^2\leqs p \leqs \exp((\log \mc{L})^2)
\\
0  & \text{ otherwise }
\end{cases}
\end{equation}
and 
\[
\mc{L}=\sqrt{a_{\omega}^{-1}\log N\log\log N}
\] 
for some constant $a_\omega$ as in \cite[eq. (7.55)]{BFKMMS}
	
	Fixing an arbitrary $\delta>0$ and $N\leqs q^{1/360-\delta}$ we have that using \cite[Lemma 7.19, Lemma 7.10]{BFKMMS}
	\begin{align*}
		\frac{1}{\phi^*(q)}\sideset{}{^*}\sum_{\chi \bmod q}|R(\chi)|^2\sim \prod_{p}\Big(1+r(p)^2\omega(p)\Big)
	\end{align*}
	and using \cite[Lemma 7.19, Lemma 7.12, Lemma 7.14]{BFKMMS} there exists a squarefree integer $u\geq 1$ coprime to $rr'$ such that 
	\begin{align*}
		&\frac{1}{\phi^*(q)}\sideset{}{^*}\sum_{\chi \bmod q}|R(\chi)|^2 L(1/2,f\otimes \chi)L(1/2,g\otimes \bar{\chi})\chi(u)\\
		=&L^*(1,f\otimes g)(\nu+o(1))\prod_{p}\Big(1+r(p)^2 \omega(p)+\frac{r(p)\omega'(p)}{\sqrt{p}}\Big)
		+O\Big(q^{-\delta}\prod_{p}\big(1+r(p)^2\omega(p)\big)\Big),
	\end{align*}
	where $L^*(s,f\otimes g)$ is as in \cite[eq. (2.7)]{BFKMMS},   $\nu\not=0$ is a constant depending on $f,g$ only. As in \cite{BFKMMS}, the $\chi(u)$ is introduced to break the symmetry of $\chi$ and $\ol{\chi}$. Since $|\chi(u)|\leqs1$ and $L^*(f\otimes g, 1)\not=0$ (\cite[Lemma 2.6]{BFKMMS}) we see that 
	\begin{align}\label{fgchilowerbound}
		&\frac{1}{\phi^*(q)}\sideset{}{^*}\sum_{\chi \bmod q}|L(1/2,f\otimes \chi)\overline{L(1/2,g\otimes \chi)}R(\chi)|^2 \\&\gg_{f,g} \prod_{p}\Big(1+r(p)^2 \omega(p)+\frac{r(p)w'(p)}{\sqrt{p}}\Big)^2 \Big(1+r(p)^2 \omega(p)\Big)^{-1}.
	\end{align}
	
	We now turn to getting an upper bound for the mean squares on the right of \eqref{chi q ineq}. Similarly to the proof of \cite[Lemma 7.9]{BFKMMS}, we have for $(\ell, \ell')=(\ell\ell', qrr')=1$, $\ell,\ell'\leqs L$,
	\begin{multline*}
	\frac{1}{\phi^*(q)}\sideset{}{^*}\sum_{\chi \bmod q} |L(1/2,f\otimes \chi)|^2\chi(\ell) \chi(\ell')=\frac{1}{2}\operatorname{MT}^{+}(f,f;\ell, \ell')+\frac{1}{2}\operatorname{MT}^{-}(f,f;\ell, \ell')
	+O(L^{3/2}q^{-1/144+\epsilon}),
	\end{multline*}
	where
	\begin{align*}
	\operatorname{MT}^{\pm}(f,f;\ell, \ell')&=\frac{1}{2\pi i}\int_{(2)}\frac{ L_\infty^\pm (f, \pm, \frac{1}{2}+ u)}{L_\infty^2(f, \pm,\frac{1}{2})}\frac{D(1+2u;\ell,\ell')}{(\ell \ell')^{1/2+u}}G(u)(q^2|rr'|)^u\frac{du}{u},\\
	D(s; \ell, \ell')&=\sum_{n}\frac{\lambda_f(\ell n)\lambda(\ell'n)}{n^s}=L^*(f\otimes f, s)\prod_{p\mid\ell \ell'}(1-p^{-2s})^{-1}\prod_{p\mid \ell\ell'}(\lambda_f(p)-\frac{\lambda_f(p)}{p^s}).
	\end{align*}
	Here $G(u)=\cos(\frac{\pi u}{4A})^{-16 A}$ for some $A\geq 2$ and $L_\infty(f, \pm, s)=L_\infty(f\otimes\chi, s)$ for $\chi(-1)=\pm1$ (see \cite[Lemma 2.1]{BFKMMS} for definitions of $L_\infty (f\otimes \chi, s)$).
	Shifting the contour of integration to $\Re(u)=-\frac{1}{4}+\epsilon$, we encounter a double pole at $u=0$ so that \begin{align*}
	\operatorname{MT}^\pm= \frac{\lambda_f^*(\ell\ell')}{\sqrt{\ell\ell'}}\Big(\frac{L^*(\operatorname{Sym}^2f, 1)}{\prod_{p\mid r}(1+p^{-1})\zeta(2)}+2\log (|r|q)-\log(\ell\ell')+C_f+\sum_{p\mid \ell \ell'}\frac{2\log p}{p+1}\Big)
	\end{align*}
	using \cite[eq. (2.9)]{BFKMMS} for $\operatorname{Res}_{s=1}L^*(f\otimes f, 1)$ and $C_f=\frac{d}{du}\frac{L_\infty^\pm(f, \pm, \frac{1}{2}+u)}{L^2_\infty(f, \pm , 1/2)}\Big\vert_{u=0}$. 
	
	Therefore, we have that 
	\begin{multline*}
		\frac{1}{\phi^*(q)}\sideset{}{^*}\sum_{\chi \bmod q}|L(1/2,f\otimes \chi)R(\chi)|^2
		=\sum_{d}|r(d)|^2 \omega(d)\sum_{\substack{\ell, \ell'\leq N/d\\ (\ell\ell',d)=1}}\frac{r(\ell\ell')\varpi(\ell \ell')}{\sqrt{\ell \ell'}}\lambda_f^*(\ell\ell')
		\\
		\times \Big(A_f+2\log q-\log (\ell \ell')+\sum_{p\mid \ell \ell'} \frac{2\log p}{p+1}+O(N^{3/2}q^{-1/144})\Big)
	\end{multline*}
	where $A_f=\frac{L^*(\operatorname{Sym}^2f, 1)}{\prod_{p\mid r}(1+p^{-1})\zeta(2)}+C_f+2\log |r|$.
	It follows that 
	\begin{align}
		&\frac{1}{\phi^*(q)}\sideset{}{^*}\sum_{\chi \bmod q}|L(1/2,f\otimes \chi)R(\chi)|^2 +O(N^{7/2}q^{-144})
		\\&
		=\sum_{d}|r(d)|^2 \omega(d)\sum_{\substack{\ell, \ell'\leq N/d\\ (\ell\ell',d)=1}}\frac{r(\ell \ell')\varpi(\ell \ell')}{\sqrt{\ell \ell'}}\Big(A_f\lambda_f^*(\ell \ell')
		+\lambda_f^*(\ell\ell')
		\big(2\log q-\log (\ell\ell')
		+2\sum_{p\mid \ell \ell'}\frac{\log p}{p+1}\big)\Big)		
	\end{align}
	Note from the support of $\varpi$ in \eqref{Gdef}, we have $0\leqs \varpi(p)\lambda_f^*(p)=\omega_1'(p)$ and thus 
	\begin{align*}
	\sum_{\substack{\ell, \ell'\leq N/d\\ (\ell\ell',d)=1}}\frac{r(\ell\ell')\varpi(\ell \ell')\lambda_f^*(\ell\ell')}{\sqrt{\ell \ell'}}\leq \prod_{\substack{\ell, \ell'\\ (\ell\ell',d)=1}}\frac{r(\ell\ell')\varpi(\ell\ell')\lambda_f^*(\ell\ell')}{\sqrt{\ell\ell'}}=\prod_{\substack{\ell, \ell'\\ (\ell\ell',d)=1}}\frac{r(\ell\ell')\omega_1'(\ell\ell')}{\sqrt{\ell\ell'}}.
	\end{align*} 
	Therefore
	\begin{align*}
	&\sum_{d}|r(d)|^2 \omega(d)\sum_{\substack{\ell, \ell'\leq N/d\\ (\ell \ell',d)=1}}\frac{r(\ell\ell')\varpi(\ell\ell')}{\sqrt{\ell\ell'}}A_f\lambda_f^*(\ell\ell')
	\ll_f \prod_{p}\Big(1+r(p)^2 \omega(p)+\frac{2r(p)\omega_1'(p)}{\sqrt{p}}\Big).
	\end{align*}
	With the current choice of $\varpi, N$, we also have that  $2\log q-\log(\ell\ell')+\sum_{p\mid \ell \ell'}\frac{2\log p}{p+1}\geqs 0$, which together with $\varpi(\ell)\lambda_f^*(\ell)\geq 0$ gives 
	\begin{align*}
		&\sum_{d}|r(d)|^2 \omega(d)\sum_{\substack{\ell, \ell'\leq N/d\\ (\ell\ell',d)=1}}\frac{r(\ell\ell')\varpi(\ell\ell')}{\sqrt{\ell\ell'}}\Big(2\log q-\log(\ell\ell')+\sum_{p\mid \ell \ell'}\frac{2\log p}{p+1}\Big)\lambda_f^*(\ell\ell')\\
		&\ll\log q\sum_{d}|r(d)|^2 \omega(d)\sum_{\substack{\ell, \ell'\\ (\ell\ell',d)=1}}\frac{r(\ell\ell')\varpi(\ell\ell')}{\sqrt{\ell\ell'}}\lambda_f^*(\ell\ell')\\
		&\ll\ \log q\prod_{p}\Big(1+r(p)^2\omega(p)+\frac{2r(p)\omega_1'(p)}{\sqrt{p}}\Big).
	\end{align*}
	Therefore, we have 
	\begin{align}\label{fgchiupperbound}
		&\frac{1}{\phi^*(q)}\sideset{}{^*}\sum_{\chi \bmod q}\Big(|L(1/2,f\otimes \chi)R(\chi)|^2+ |L(1/2,g\otimes \chi)R(\chi)|^2\Big)
		\\
		\ll & \max_{i=1,2}\ \log q\prod_{p}\Big(1+r(p)^2\omega(p)+\frac{2r(p)\omega_i'(p)}{\sqrt{p}}\Big).
	\end{align}
	Combining \eqref{fgchilowerbound} and \eqref{fgchiupperbound}, 
 we can choose $$V=\min_{i=1,2}\frac{1}{\log q}\prod_{p}\Big(1+r(p)^2 \omega(p) +\frac{r(p)\omega'(p)}{\sqrt{p}}\Big)^2(1+r(p)^2 \omega(p))^{-1}\Big(1+r(p)^2 \omega(p)+\frac{2r(p)\omega_i'(p)}{\sqrt{p}}\Big)^{-1}.$$
	We have from \cite[Proof of Lemma 7.5]{BFKMMS} that
	\begin{align*}
	&\log\prod_{p}\Big(1+r(p)^2 \omega(p) +\frac{r(p)\omega'(p)}{\sqrt{p}}\Big)^2(1+r(p)^2 \omega(p))^{-1}\Big(1+r(p)^2 \omega(p)+\frac{2r(p)\omega_i'(p)}{\sqrt{p}}\Big)^{-1}\\
	&=\log\prod_{p}\Big(1 +\frac{r(p)\omega'(p)}{\sqrt{p}(1+r(p)^2\omega(p))}\Big)^2\Big(1+\frac{2r(p)\omega_i'(p)}{\sqrt{p}(1+r(p)^2\omega(p))}\Big)^{-1}\\
	&=\mc{L}\sum_{L^2 \leq p\leq \exp(\log^2 L)}\frac{2\omega'(p)-2\omega_i'(p)}{p\log p}+O_{\omega, \omega', \delta}\Big(\frac{L}{(\log L)^{1+\delta}}\Big)
	\end{align*}
Since $\lambda_f^*(p)\lambda_g^*(p)(\lambda_f^*(p)+\lambda_g^*(p))\lambda_f^*(p),  \lambda_f^*(p)\lambda_g^*(p)(\lambda_f^*(p)+\lambda_g^*(p))\lambda_g^*(p)\leqs 0$ when $p\not \in \mathcal G$, we have 
\begin{align*}
\sum_{L^2\leq p\leq \exp (\log^2 L)} \frac{2\omega'(p)-2\omega_1'(p)}{p\log p}&\geqs\sum_{\substack{\mc{L}^2 \leqs p\leqs \exp (\log^2 \mc{L})\\ p\in \mathcal G}} \frac{2\lambda_f^*(p)\lambda_g^*(p)(\lambda_f^*(p)+\lambda_g^*(p))\lambda_g^*(p)}{p\log p}\\
& \geqs\sum_{\substack{\mc{L}^2 \leq p\leq \exp (\log^2 \mc{L})}} \frac{2\lambda_f^*(p)\lambda_g^*(p)(\lambda_f^*(p)+\lambda_g^*(p))\lambda_g^*(p)}{p\log p}\\
& = \frac{c}{\log L}+O(\frac{1}{(\log L)^2}).
\end{align*}
for some positive $c=2n_{2,2}+2n_{1,3}$ using the notation for $n_{i,j}$ in \cite[Corollary 2.17]{BFKMMS}. Thus we see that for every prime $q$ sufficiently large depending on $f, g$, there exists a non-trivial character $\chi \bmod q$ such that 
\begin{align*}
\min_{f,g }(|L(1/2,f\otimes \chi)|, |L(1/2,g\otimes\chi)|) \geqs \sqrt{V}\geqs \exp \bigg(c_{f,g}\sqrt{\frac{\log q}{\log \log q}}\bigg)
\end{align*}
for some positive $c_{f,g}$. 
With $N=q^{1/360-\delta}$, we can take the constant $$c_{f,g}=\frac{1}{2}(\frac{1}{6\sqrt{10}}\sqrt{1-360\delta}+o(1) )\frac{n_{2,2}+\min(n_{1,3}, n_{3,1})}{(n_{4,2}+2n_{3,3}+n_{2,4})^{1/2}}.$$
In a generic situation, as in \cite[Remark 7.20]{BFKMMS}, i.e. where neither $f$ nor $g$ are of polyhedral type (in particular $\operatorname{Sym^k}f, \operatorname{Sym}^kg$ are cuspidal for all $k\leq 4$) and if $\operatorname{Sym}^k \pi_f\not \cong \operatorname{Sym}^k \pi_g$ for $k\leqs 4$, we see that 
\begin{align}
c_{f,g}=\frac{1}{12\sqrt{10}}+o(1).
\end{align} 

\section{Simultaneous small values of quadratic twists: Proof of Theorem \ref{quad twist thm}}

Let $f,g$ be holomorphic cusp forms of weight $\kappa\equiv 0 \bmod 4$ for $SL_2(\mathbb{Z})$ and let $\chi_{d}(n)=\big(\frac{d}{n}\big)$ be the Kronecker symbol. 
Due to the non-negativity of $L(1/2,f\otimes \chi_{d})$, small values of $L(1/2,f\otimes \chi_{d})+L(1/2,g\otimes \chi_{d})$ implies simultaneous small values of $L(1/2, f\otimes \chi_{d})$ and $L(1/2, g\otimes \chi_{d})$.  
Let $\Phi(x):(0, \infty)\rightarrow \mathbb{C}$ be a smooth, compactly supported function. From \cite[Theorem 1.4]{Shen}, we have for square-free $\ell$ 
\begin{align*}
\sideset{}{^*}\sum_{(d,2)=1}\chi_{8d}(\ell)L(1/2,f\otimes \chi_{8d})\Phi(\frac{d}{X})=\frac{8X \tilde{\Phi}(1)}{\pi^2 \sqrt{\ell}} L(1,\operatorname{Sym}^2f)Z(1/2, \ell)+O(\ell^{1/2+\epsilon}X^{1/2+\epsilon})
\end{align*}
%
%
where the $*$ now denotes a sum over squarefree integers and 
\begin{align*}
L(1,\operatorname{Sym}^2f)Z(1/2,\ell)
=&
\prod_{p\mid \ell}\frac{p^{3/2}}{2(p+1)}\Big((1-\frac{\lambda_f(p)}{\sqrt{p}}+\frac{1}{p})^{-1}-(1+\frac{\lambda_f(p)}{\sqrt{p}}+\frac{1}{p})^{-1}\Big)
\\& \times \prod_{p\nmid 2\ell}\Big(1+\frac{p}{2(p+1)}\big((1-\frac{\lambda_f(p)}{\sqrt{p}}+\frac{1}{p})^{-1}+(1+\frac{\lambda_f(p)}{\sqrt{p}}+\frac{1}{p})^{-1}-2\big)\Big).
\end{align*}
We write $L(1,\operatorname{Sym}^2f)Z(1/2,\ell)=:
L(1,\operatorname{Sym}^2 f)Z(1/2, 1) \prod_{p\mid \ell}h_f(p)$ so that 
\begin{align*}
h_f(p)&=\frac{p^{3/2}}{2(p+1)}\Big((1-\frac{\lambda_f(p)}{\sqrt{p}}+\frac{1}{p})^{-1}-(1+\frac{\lambda_f(p)}{\sqrt{p}}+\frac{1}{p})^{-1}\Big)\\& \times\Big(1+\frac{p}{2(p+1)}\big((1-\frac{\lambda_f(p)}{\sqrt{p}}+\frac{1}{p})^{-1}+(1+\frac{\lambda_f(p)}{\sqrt{p}}+\frac{1}{p})^{-1}-2\big)\Big)^{-1}
\\&=\lambda_f(p)+ O\big(\frac{|\lambda_f(p)|}{p}\big).
\end{align*}
Thus for $R(\chi_{8d})=\sum_{n\leq N}\mu(n)r(n)\varpi(n)\chi_{8d}(n)$ with $r, \varpi$ real multiplicative functions supported on squarefree integers
\begin{align}\label{dn1n2}
&\sideset{}{^*}\sum_{(d, 2)=1}L( 1/2,f\otimes \chi_{8d})|R(\chi_{8d})|^2\Phi(\frac{d}{X})
\\
&=\frac{8X\tilde{\Phi}(1)}{\pi^2}L(1, \operatorname{Sym}^2f)Z(1/2,1)\sum_{d\leq N}r(d)^2\omega(d)\sum_{\substack{n_1, n_2\leq N/d\\ (n_1n_2,d)=1}}\frac{\mu(n_1n_2)r(n_1n_2)\omega_1'(n_1n_2)}{\sqrt{n_1n_2}}
\\
&\quad +O( N^{5/2+\epsilon}X^{1/2+\epsilon})\\
\end{align}
where $\omega(n)=|\varpi(n)|^2$ and $\omega_1'(n)=\varpi(n)h_f(n)$. A similar expression holds when $f$ is replaced by $g$. Now it remains to find $r(n)$ and $\varpi(n)$. 

As usual, let $r(n)$ be multiplicative supported on squarefrees and satisfying \eqref{r 2} with $\mc{L}=\sqrt{a_{\omega}\log N\log\log N}$ where $a_\omega$ is defined as in \cite[eq (7.2)]{BFKMMS}.
Let $
\tilde{\mathcal G}:=\{n\geq 1: h_f(n)h_g(n)\not=0, \operatorname{sgn}(h_f(n))=\operatorname{sgn}(h_g(n))\}
$
and define $\varpi$ as
\begin{align}\label{varpidef3}
\varpi(p)=\begin{cases}
h_f(p)h_g(p)(h_f(p)+h_g(p)), & p\in \tilde{\mathcal G},\\
0, & p\not \in \tilde{\mathcal G}. 
\end{cases}
\end{align}
  Then
similarly as before, we can evaluate the $d, n_1, n_2$-sum in \eqref{dn1n2} as 
\begin{align*}
(1+o(1))\prod_{p}\Big(1+r(p)^2\omega(p)-\frac{2r(p)\omega_1'(p)}{\sqrt{p}}\Big).
\end{align*}
With $N=X^{1/5-\delta}$ we see that \eqref{dn1n2} becomes
\begin{align}
&\frac{8X\tilde{\Phi}(1)}{\pi^2}L(\operatorname{Sym}^2f, 1/2)Z(1/2,1)(1+o(1))\prod_{p}\Big(1+r(p)^2 \omega(p)-\frac{2r(p)\omega_1'(p)}{\sqrt{p}}\Big) +O(X^{1-5\delta/2+\epsilon}).
\end{align}
By standard computations (e.g. see \cite{Sound res}), we have 
\[
\sideset{}{^*}\sum_{(d,2)=1}|R(\chi_{8d})|^2\Phi(\tfrac{d}{X})\sim cX\prod_p\big(1+r(p)^2\omega(p))
\]
for some positive constant $c$.

Thus, since $h_f(p)=\lambda_f(p)+O(p^{-1+\theta})$ with $\theta=7/64$, we have from \cite[Corollary 2.17, Lemma 7.19, proof of Lemma 7.5]{BFKMMS} that our ratio of mean values is 
\begin{align*}
\ll
\prod_{p}\Big(1-\frac{2r(p)\omega_1'(p)}{\sqrt{p}(1+r(p)^2\omega(p))}\Big)
\leqs
 \exp\Big(- (n_{3,1}+n_{2,2}+o(1))\frac{\mc{L}}{2\log \mc{L}}\Big)
\end{align*}
with $n_{i,j}$ defined as in \cite[Corollary 2.17]{BFKMMS}. 
Therefore, we see that there exists $d$ such that 
\begin{align*}
\max(L(1/2,f\otimes \chi_{8d}), L(1/2,g\otimes \chi_{8d}))\leqs \exp\Big(-\tilde{c}_{f,g}(\sqrt{\frac{1}{5}-\delta})\sqrt{\frac{\log X}{\log\log X}}\Big).
\end{align*}
where positive $\tilde{c}_{f,g}=\frac{\min\big(n_{3,1}, n_{1,3}\big)+n_{2,2}}{\sqrt{a_\omega}}+o(1)>0$. In a generic situation, where neither $f$ nor $g$ are of polyhedral type (in particular $\operatorname{Sym^k}f, \operatorname{Sym}^kg$ are cuspidal for all $k\leq 4$) and if $\operatorname{Sym}^k \pi_f\not \cong \operatorname{Sym}^k \pi_g$ for $k\leq 4$, then 
$\tilde{c}_{f,g}=1+o(1)$ using \cite[eq. (7.55)]{BFKMMS} for $a_\omega$.


\end{document}